%% file: main.tex
\documentclass[a4paper,11pt]{amsart}

\input{mycommands}

\makeatletter

\renewcommand{\l@section}{%
  \@tocline{1}{6pt}{0pt}{}{\bfseries}%
}

\renewcommand{\l@subsection}{%
  \@tocline{2}{0pt}{1.5em}{}{\normalfont}%
}

\makeatother

\hypersetup{hidelinks}

\title[Hölder Regularity of the Top Lyapunov Exponent]{Hölder regularity of the top Lyapunov exponent for random matrix products with low-dimensional equator}

\author[A. C. Barreto]{Ana Cristina Barreto}
\address{Universidade Federal do Ceará. Ceará, Brazil.}
\email{anabarretoaraujo@mat.ufc.br}

\author[A. Sánchez]{Adriana Sánchez}
\address{Centro de investigación de Matemática Pura y Aplicada, Universidad de Costa Rica. San José, Costa Rica.}
\email{adriana.sanchez\_c@ucr.ac.cr}

\author[E. Y. Tall]{El Hadji Yaya Tall}
\address{Centro de investigación de Matemática Pura y Aplicada, Universidad de Costa Rica. San José, Costa Rica.}
\email{el.tallL@ucr.ac.cr}

\author[M. Viana]{Marcelo Viana}
\address{Instituto de Matematica Pura e Aplicada. Rio de Janeiro, Brazil.}
\email{viana@impa.br}

\keywords{ Lyapunov exponents}
\subjclass[2020]{Primary: 37H15, 37A20; Secondary: 37D25}
 
\begin{document}

\begin{abstract}
We investigate the regularity of Lyapunov exponents for products of random invertible matrices of arbitrary dimension $d\ge 2$. We prove that the top Lyapunov exponent is pointwise Hölder continuous with respect to the Wasserstein-Hausdorff distance in the space of probability measures with compact support, in the case where this exponent is simple and the equator has dimension equal to or less than one.
\end{abstract}

\maketitle
\tableofcontents

\input{body/1Introduction}

\section{Preliminaries}\label{s:preliminaries}

\input{body/2Preliminaries}

\section{Proof of Theorem A assuming Theorem B}\label{s:thmA}
\input{body/3ProvaThmA}

\section{Proof of Theorem B}\label{s:thmB}
\input{body/4ProvaThmB}

\bibliographystyle{alpha}
\bibliography{body/bibliography}

\end{document}

%% file: mycommands.tex
\usepackage{amsmath}
\usepackage{amsthm}
\usepackage{amsfonts}
\usepackage{amssymb}
\usepackage{amscd}
\usepackage{latexsym}
\usepackage[utf8]{inputenc}
\usepackage[dvipsnames]{xcolor}
\usepackage{hyperref}
\usepackage{caption}
\usepackage{subcaption}
\usepackage{float}
\usepackage{wrapfig}
\usepackage{amsrefs}
\usepackage{geometry}
\usepackage{marginnote}
\usepackage{dsfont}
\usepackage{mathtools}
    \mathtoolsset{showonlyrefs=true}
    \allowdisplaybreaks
    \newcommand{\proofpart}[1]{\vspace{1em}\noindent\textbf{#1.}}
\usepackage{tcolorbox}
\tcbuselibrary{breakable, skins}

\newtheorem{theorem}{Theorem}

\newtheorem{proposition}[theorem]{Proposition}
\newtheorem{lemma}[theorem]{Lemma}
\newtheorem{corollary}[theorem]{Corollary}
\newtheorem{definition}[theorem]{Definition}
\newtheorem{remark}[theorem]{Remark}
\newtheorem{example}[theorem]{Example}
\newtheorem{claim}[theorem]{Claim}
\numberwithin{theorem}{section}
\numberwithin{equation}{section}
\numberwithin{figure}{section}
\newtheorem{thmx}{Theorem}[section]

\def\Gr{{\operatorname{Gr}}}

\def\NN{{\mathbb N}}
\def\PP{{\mathbb P}}

\def\RR{{\mathbb R}}

\def\cA{{\mathcal A}}
\def\cB{{\mathcal B}}

\def\cE{{\mathcal E}}
\def\cF{{\mathcal F}}

\def\cL{{\mathcal L}}

\def\cN{{\mathcal N}}

\def\cP{{\mathcal P}}
\def\cQ{{\mathcal Q}}

\def\cT{{\mathcal T}}

\def\cX{{\mathcal X}}
\def\cY{{\mathcal Y}}

\def\fT{{\mathfrak{T}}}
\def\fg{{\mathfrak{g}}}

\def\ddx{{\operatorname{d}_X}}
\def\ddp{{\operatorname{d}_P}}
\def\dtheta{{\operatorname{d}_\theta}}

\def\GL{\operatorname{GL}}

\def\supp{\operatorname{supp}}

\def\Stat{\operatorname{Stat}}

\def\dim{\operatorname{dim}}

\def\Prob{\operatorname{Prob}}

\DeclareMathOperator{\VA}{VA}

\newcommand{\paren}[1]{\left( #1 \right)}
\newcommand{\norm}[1]{\left\Vert #1 \right\Vert}

\newcommand{\set}[1]{\left\{ #1 \right\}}
\newcommand{\dd}{\,\mathrm{d}} 

\usepackage{scalerel}[2016/12/29]

\usepackage{anysize}
\marginsize{2.5cm}{2.5cm}{1.3cm}{1.3cm}

\usepackage{bclogo}
\usepackage{amsrefs}

%% file: body/1Introduction.tex
\section{Introduction}\label{s:intro}

In this paper, we consider products of independent,
identically distributed (i.i.d.) matrices with common probability law
$\nu\in\operatorname{Prob}_c(G)$, where $G=\GL(d,\RR)$,
$d\geq 2$, and $\operatorname{Prob}_c(G)$ denotes the space
of compactly supported probability measures on $G$.
These products can be represented as a linear cocycle over
the Bernoulli shift on $(G^{\NN},\nu^{\NN})$,
with $A((g_j)_{j\in\NN})=g_0$.

The compact support assumption ensures the integrability
required by the Furstenberg-Kesten theorem \cite{FK60},
which guarantees that the limit
$$
\lambda_1(\nu)
=
\lim_{n\to\infty}\frac{1}{n}
\log\|g_{n-1}\cdots g_0\|
$$
exists for $\nu^{\NN}$-almost every sequence. By ergodicity of the Bernoulli shift, this limit is almost surely constant. Its value is called the \textit{top Lyapunov exponent of $\nu$}. Thus, the map $\nu\mapsto\lambda_1(\nu)$ defines a real-valued function on $\operatorname{Prob}_c(G)$.

Oseledets’ multiplicative ergodic theorem \cite{Oseledets} gives a more complete description of asymptotic growth, providing the full Lyapunov spectrum and an associated measurable filtration. A natural question is how these asymptotic growth rates, the Lyapunov exponents, vary under perturbations of the probability law.

For products of i.i.d. invertible matrices, Kifer \cite{Ki82b} established continuity of the top Lyapunov exponent at irreducible probability measures (that is, measures whose support has no common nonzero proper invariant subspace), under weak$^*$ convergence with suitable uniform integrability assumptions. Furstenberg and Kifer \cite{FK83} proved a  related continuity theorem under a weaker irreducibility assumption: the matrices in the support have at most one common nonzero proper invariant subspace. Their result holds in arbitrary dimension.

Still in the i.i.d. setting, Böcker and Viana \cite{BV2017} proved continuity of the Lyapunov exponents for compactly supported probability
measures in dimension two, without any irreducibility assumption. Avila, Eskin and Viana \cite{AEV2023} extended this result
to products of random matrices of arbitrary dimension.

Regarding the quantitative regularity of the top Lyapunov exponent in the i.i.d. case, we first highlight the work of Le Page \cite{LP89}. He established local Hölder continuity in arbitrary dimensions for probability measures satisfying strong irreducibility and a contraction property. Notably, these assumptions imply the simplicity of the top Lyapunov exponent, as shown by Guivarc’h and Raugi \cite{GR1986} and Gol’dsheid and Margulis \cite{GM1989}. This naturally leads to the question: is simplicity of the top Lyapunov exponent sufficient to guarantee Hölder continuity, at least in the pointwise sense?

In \cite{TV2020}, Tall and Viana answered this question affirmatively in dimension $d=2$. For compactly supported probability measures, they proved
pointwise Hölder continuity of the Lyapunov exponents
with respect to the Wasserstein-Hausdorff distance
at every measure with simple spectrum.
They also established pointwise log-Hölder continuity
without the simplicity assumption.

Motivated by Tall and Viana \cite{TV2020}, we extend their pointwise Hölder continuity result to higher-dimensional
systems with a simple top Lyapunov exponent, under
the additional assumption that the equator has
dimension at most one.

The results in \cite{BV2017}, \cite{AEV2023} and  \cite{TV2020} share a common topological setting: the space $\operatorname{Prob}_c(G)$ equipped with the topology $\fT$ combining weak convergence of measures and Hausdorff convergence of their supports. For their quantitative estimates, the authors in \cite{TV2020}  used the Wasserstein-Hausdorff distance $\delta_{\fT,\theta}$, which metrizes $\fT$. We use the same distance in our main result. See Section \ref{s:preliminaries} and \eqref{eq:definitionmetric} for the precise definitions.

For further continuity and regularity results in dimension two, including Markov cocycles and cocycles with invariant holonomies, see \cites{MV2015,BBB2018,DK2019b}. Results in arbitrary dimension and related settings can be found in \cites{Amorim2025,LV2026,CDKM2022,DG2025,DKP2022}. For a broader perspective on continuity and discontinuity of Lyapunov exponents, see Viana’s survey \cite{V2020Dis} and the references therein.

\subsection{Main results}
Before stating our main result, we introduce the concept of the equator. A subspace $\cL$ of $\RR^d$ is said to be $\nu$-invariant if $g\cL=\cL$ for all $g\in \supp\nu$. The $\nu$-equator, denoted by $E_\nu$, is the largest $\nu$-invariant subspace whose restricted top Lyapunov exponent is strictly smaller than $\lambda_1(\nu)$, see Definition \ref{definition.equator}. We set $E_\nu=\{0\}$ when no nonzero invariant subspace has this property. The main result of this paper is the following.

\begin{thmx}\label{theorem.HolderCont}
Let $\nu\in \operatorname{Prob}_c(G)$ be such that $\lambda_1(\nu)$ is simple and the $\nu$-equator $E_\nu$ satisfies $\dim E_\nu \in\{0,1\}$. There exists a constant $\theta_0=\theta_0(\nu)>0$ such that for every $\theta\in (0,\theta_0]$ there exists a neighborhood $\cN_0=\cN_0(\nu, \theta)$ of $\nu$ in $\operatorname{Prob}_c(G)$ and there are constants $C_0=C_0(\nu,\theta)>0$ and $\beta_0=\beta_0(\nu,\theta)\in (0,1]$ satisfying
$$
|\lambda_1(\nu)-\lambda_1(\nu')| \le C_0\, \delta_{\fT,\theta}(\nu,\nu')^{\beta_0}
$$
for every $\nu' \in \cN_0$.
\end{thmx}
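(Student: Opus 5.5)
The proof will follow the scheme of Tall and Viana \cite{TV2020}: reduce Theorem A to a Hölder estimate on the stationary measures of the projective action (Theorem B), and then integrate through the Furstenberg formula. For $\nu'$ close to $\nu$, write
$$
\lambda_1(\nu')=\int\!\!\int \log\frac{\|g v\|}{\|v\|}\,d\nu'(g)\,d\eta'([v]),
$$
where $\eta'$ is a suitable $\nu'$-stationary measure on $\PP(\RR^d)$ realizing the top exponent. The function $\Phi(g,[v])=\log(\|gv\|/\|v\|)$ is Lipschitz in $[v]$ and in $g$, uniformly on compact subsets of $G$. Hence
$$
|\lambda_1(\nu)-\lambda_1(\nu')| \le C\bigl(\delta_{\fT,\theta}(\nu,\nu')+\operatorname{dist}_{\theta}(\eta,\eta')\bigr),
$$
where $\operatorname{dist}_\theta$ is a $\theta$-Hölder Wasserstein distance between measures on the projective space. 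Theorem A therefore reduces to proving that such stationary measures $\eta'$ can be chosen to depend $\beta$-Hölder continuously on $\nu'$ at $\nu$; I take this to be the content of Theorem B.

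To prove Theorem B I would use a spectral-gap argument for the Markov operator $P_\nu\varphi([v])=\int\varphi([gv])\,d\nu(g)$ acting on $\theta$-Hölder functions. The difficulty is that without irreducibility the contraction holds only away from the equator. Simplicity of $\lambda_1$ together with the definition of $E_\nu$ implies the following: for every $[v]\notin \PP(E_\nu)$, the iterates $g_{n-1}\cdots g_0$ expand $v$ at rate $\lambda_1$, and pairs of directions are contracted at rate $\lambda_1-\lambda_2>0$. So for $\theta$ small (this is where $\theta_0$ comes from) I expect an estimate of the form
$$
\int \frac{d([g^n u],[g^n w])^\theta}{d([u],[w])^\theta}\,d\nu^n(g)\le C e^{-c n}\, \rho(u,w)^{-\theta'},
$$
with a correction factor $\rho$ measuring the distance of $u,w$ to $E_\nu$. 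This is the analogue of the "average contraction off the equator" used in \cite{TV2020}.

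The hypothesis $\dim E_\nu\le 1$ enters at this point. If $E_\nu=\{0\}$, the stationary measure $\eta$ is unique, the contraction is uniform, and the standard perturbative argument gives $\|P^n_{\nu'}-P^n_\nu\|\le C n\,\delta_{\fT,\theta}(\nu,\nu')$. Combining this with the exponential contraction and choosing $n\approx \log(1/\delta)$ yields Hölder dependence of $\eta'$, with $\beta_0$ of order $c/(c+\log\|\supp\|)$. If $\dim E_\nu=1$, then $\PP(E_\nu)$ is a single fixed point $p$. The quotient cocycle on $\RR^d/E_\nu$ carries the top exponent, and near $p$ the projective dynamics is of "saddle" type: $p$ is repelling in average, since the exponent on $E_\nu$ is smaller than $\lambda_1$. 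For $\nu'$ near $\nu$ there may no longer be an invariant line, but $\nu'$ still has nearly fixed behaviour near $p$. I would build a Lyapunov (Margulis-type) function $u([v])=d([v],p)^{-s}$ with $P_{\nu'}u\le a u+b$, where $a<1$ uniformly on a neighbourhood $\cN_0$. This gives uniform tail control: any stationary $\eta'$ assigns mass at most $C r^{s}$ to the $r$-ball around $p$.

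The proof then truncates. Run the contraction argument on the complement of the $r$-ball around $p$, bound the contribution of the ball by $r^{s}$, and optimize $r$ as a power of $\delta_{\fT,\theta}(\nu,\nu')$; this again produces a power law.

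I expect the main obstacle to be this uniform Lyapunov-function estimate near the equator point for all $\nu'\in\cN_0$. The integrand $\log(d([gv],p)/d([v],p))$ must have negative $\nu'$-average uniformly, even though $\nu'$ does not fix $p$. My first attempt would be to work with iterates $P^N$ for a fixed large $N$, chosen so that the inequality holds for $\nu$ with a margin. I would then use the Hausdorff part of $\delta_{\fT,\theta}$: it puts $\supp\nu'$ near $\supp\nu$, and hence each $g'$ nearly fixes $p$, which transfers the margin to $\nu'$, with an additive error absorbed by $b$. Dimension one is essential here, because the equator then reduces to a single point rather than a subspace carrying its own nontrivial projective dynamics.
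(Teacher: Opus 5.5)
Your skeleton matches the paper's. It reduces, via the Furstenberg formula, to bounding $\dtheta(\eta_\nu,\eta')$ for maximal stationary measures. It then uses contraction of pairs away from the equator, and truncates at a radius $r$ chosen as a power of $\delta_{\fT,\theta}(\nu,\nu')$. The zero-dimensional case is essentially fine, provided you justify the uniform contraction; the paper does this with Le Page's argument on a compactification of the space of pairs.

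\textbf{The gap.} The problem is the step you flag yourself. A uniform drift inequality $P^N_{\nu'}u\le au+b$ for $u=d(\cdot,p)^{-s}$, with $a<1$ and $b$ independent of $\nu'\in\cN_0$, is false. So is its consequence that \emph{every} $\nu'$-stationary measure gives mass at most $Cr^s$ to the $r$-ball around $p$.

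Here is a counterexample. Take $h\in G$ close to the identity with $hp\neq p$, and let $\nu'$ be the image of $\nu$ under $g\mapsto hgh^{-1}$.
\begin{itemize}
\item $\delta_{\fT,\theta}(\nu,\nu')\to 0$ as $h\to I$.
\item Every element of $\supp\nu'$ fixes $p'=hp$, so $\delta_{p'}$ is $\nu'$-stationary.
\item Evaluating the drift inequality at $p'$ gives $(1-a)\,d(p',p)^{-s}\le b$, which fails once $d(p,p')$ is small.
\item $\delta_{p'}$ gives full mass to the $r$-ball around $p$ for every $r\ge d(p,p')$.
\end{itemize}
The error from Hausdorff closeness of supports is not a bounded additive term. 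At distances from $p$ comparable to the perturbation size, elements of $\supp\nu'$ can fix a nearby line or send points arbitrarily close to $p$. So it cannot be absorbed into $b$.

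\textbf{What survives and what replaces it.} What is true is weaker, and it needs a different mechanism. The mass bound holds only at scales $\ell\ge r$ when $\supp\nu'\subset B_{sr}(\supp\nu)$, and only for a selected maximal stationary measure. The scale restriction is harmless for your optimization, since you choose $r\gtrsim\delta_{\fT,\theta}(\nu,\nu')$ anyway. In the paper, Theorem B is exactly this near-equator estimate, not a Hölder estimate for stationary measures.

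For generic $\nu'$, Theorem B gives $\eta'(E_\nu(q\varepsilon))\le\frac23\eta'(E_\nu(\varepsilon))$. It uses properties that \emph{are} robust under such perturbations: with high probability, pairs of nearby points in $E_\nu(\varepsilon)$ are expanded, and points in an annulus escape $E_\nu(\varepsilon')$. These are combined through:
\begin{itemize}
\item a localized Markov operator on $E_\nu(\varepsilon)$ preserving the normalized restriction of $\eta'$;
\item a recoupled two-point Margulis function;
\item energy minimization over self-couplings, which requires $\eta'$ to be non-atomic. This holds for the unique stationary measure of a generic $\nu'$.
\end{itemize}
Iterating gives $\eta'(E_\nu(\ell))\le C_7\ell^{\beta_1}$ down to the perturbation scale. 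The bound passes to arbitrary $\nu'$ by approximating with generic measures and taking a weak$^*$ limit, which is maximal by continuity of $\lambda_1$. Without a replacement for this two-point, non-atomicity argument, your one-dimensional case does not close.
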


Observe that the condition $\dim E_\nu\leq 1$ is
automatically satisfied when $d=2$.

The proof of Theorem \ref{theorem.HolderCont} treats separately the cases $\dim E_\nu=0$ and $\dim E_\nu=1$.

In the zero-dimensional case, we adapt an argument of Le Page \cite{LP89} to prove a contraction property of the dual Markov operator, which yields pointwise Hölder continuity of the top Lyapunov exponent. When the equator is trivial and the top Lyapunov exponent is simple, Durães \cite{Duraes2025} established local Hölder continuity using different techniques.

In the one-dimensional case, we follow the strategy of Tall and Viana \cite{TV2020}. To implement it, we need a higher-dimensional version of the stationary measure estimate in \cite{V2020Dis}*{Theorem 3.6}. We state this higher-dimensional estimate below, after introducing the necessary notation and terminology.

We now turn to our second result.
Let $P=\mathbb{P}\RR^d$.
For $r>0$, let $E_\nu(r)\subset P$ denote the closed
$r$-neighborhood of $E_\nu$. A probability measure on $G$ is called generic if it gives zero mass to every proper algebraic subvariety of $G$. Such a measure has a unique stationary probability measure on $P$. For more details, see Section \ref{s:preliminaries}.

\begin{thmx}\label{theorem.estimateadriyaya}
Let $\nu\in\Prob_c(G)$ and assume that $\dim E_\nu=1$.
There exists a neighborhood of $\nu$, $\cN_1= \cN_1(\nu) \subset \Prob_c(G)$,
and there are constants $\varepsilon_0=\varepsilon_0(\nu)>0$, $q=q(\nu)\in(0,1)$ and $s=s(\nu)>0$
such that for every $\varepsilon\in(0,\varepsilon_0]$ and every generic
$\nu'\in\cN_1$ satisfying $\supp\nu'\subset B_{s\varepsilon}(\supp\nu),$ the unique $\nu'$-stationary measure $\eta'$ satisfies
\begin{equation}\label{eq:balance}
    \eta'(E_\nu(q\varepsilon))
    \leq \frac23\,\eta'(E_\nu(\varepsilon)).
\end{equation}
\end{thmx}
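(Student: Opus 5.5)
The plan is to follow the two-dimensional argument of \cite{V2020Dis}*{Theorem 3.6}, replacing the projective line by $P=\PP\RR^d$ and the invariant direction by the point $E_\nu\in P$. Distance to the equator will be measured through the quotient: write $\pi:\RR^d\to\RR^d/E_\nu$. For $x=[v]$ near $E_\nu$ we have $d(x,E_\nu)\asymp \|\pi v\|/\|v\|$. For $g\in\supp\nu$ the line $E_\nu$ is invariant, so $g$ induces $\bar g$ on the quotient, and
\[
\log\frac{d(gx,E_\nu)}{d(x,E_\nu)} = \log\frac{\|\bar g\,\pi v\|}{\|\pi v\|} - \log\frac{\|g v\|}{\|v\|} + O(d(x,E_\nu)).
\]
The second term is close to $\log\|g|_{E_\nu}\|$ when $x$ is close to $E_\nu$.

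\textbf{Step 1 (uniform drift for $\nu$).} By maximality of the equator (Definition \ref{definition.equator}) and simplicity of $\lambda_1(\nu)$, the quotient cocycle has no nonzero invariant subspace with exponent $<\lambda_1(\nu)$. Hence, by the Furstenberg--Kifer description, every $w$ in the quotient grows at rate $\lambda_1(\nu)$ almost surely. A standard compactness and semicontinuity argument then gives $N$ and $\delta>0$ with
\[
\inf_{\|w\|=1}\int \log\|\bar g_{N-1}\cdots\bar g_0 w\|\,d\nu^N \ge N(\lambda_1(\nu)-\delta).
\]
On the other hand, $\frac1N\int\log\|g^N|_{E_\nu}\|\,d\nu^N\le \lambda_{E_\nu}+\delta<\lambda_1(\nu)-2\delta$. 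So the $N$-step log-ratio above has mean at least $c=N\delta>0$ uniformly for $x$ in a small neighborhood of $E_\nu$, and its increments are bounded by some $C_N$. Passing to an exponential moment (the usual $e^{t}\le 1+t+t^2e^{|t|}$ trick) yields $\alpha>0$ and $\kappa<1$ with
\[
\int \Big(\frac{d(g^N x,E_\nu)}{d(x,E_\nu)}\Big)^{-\alpha}d\nu^N\le\kappa .
\]

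\textbf{Step 2 (transfer to $\nu'$).} For $\nu'$ in a weak$^*$ neighborhood $\cN_1$ with $\supp\nu'\subset B_{s\varepsilon}(\supp\nu)$, each $g'$ is $s\varepsilon$-close to some $g$. Thus $d(g'^Nx,E_\nu)$ differs from $d(g^Nx,E_\nu)$ by at most $C'_N s\varepsilon$, because $E_\nu$ is no longer exactly invariant. Consequently the Margulis-type function $\varphi(x)=\max\{d(x,E_\nu),\rho\varepsilon\}^{-\alpha}$, for a suitable small $\rho$, satisfies
\[
P_{\nu'}^N\varphi\le\kappa'\varphi+b\,(\rho\varepsilon)^{-\alpha}\mathbf 1_{E_\nu(\rho'\varepsilon)}
\]
on $E_\nu(\varepsilon_0)$, with $\kappa'<1$, once $s\ll\rho$ and $\cN_1$ is small. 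The weak$^*$ closeness is used to keep the integrals of the (continuous) log-ratios close to those for $\nu$. Outside $E_\nu(\varepsilon)$, boundedness of $\|g'^{\pm N}\|$ shows that points cannot enter $E_\nu(q\varepsilon)$ in $N$ steps if $q<e^{-C_N}/2$ and $s$ is small.

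\textbf{Step 3 (stationary measure).} Integrate the inequality against $\eta'$ restricted to $E_\nu(\varepsilon)$, using stationarity $\eta'=\nu'^{*N}*\eta'$. The mass entering $E_\nu(\varepsilon)$ from outside only lands in the outer annulus, by Step 2. This gives $\int_{E_\nu(\varepsilon)}\varphi\,d\eta'\lesssim \varepsilon^{-\alpha}\eta'(E_\nu(\varepsilon))$ with constants independent of $\varepsilon$. Genericity ensures that $\eta'$ is the unique stationary measure and charges no point, so there is no atom at $E_\nu$ to absorb mass. Chebyshev then gives
\[
\eta'(E_\nu(q\varepsilon))\le (q\varepsilon)^{\alpha}\int_{E_\nu(\varepsilon)}\varphi\,d\eta'\le C q^\alpha\,\eta'(E_\nu(\varepsilon)),
\]
and shrinking $q$ gives the factor $\tfrac23$ in \eqref{eq:balance}.

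The main obstacle is Step 2 together with the boundary bookkeeping in Step 3. The perturbation destroys invariance of $E_\nu$ at scale $s\varepsilon$, so the drift estimate fails in $E_\nu(Cs\varepsilon)$. The truncation scale $\rho\varepsilon$, the ratio $s/\rho$, and $q$ must be tuned so that the error term $b(\rho\varepsilon)^{-\alpha}$ is controlled by $\eta'(E_\nu(\varepsilon))$ rather than by an absolute constant. I would first try to choose $\rho=q^2$ and $s=q^{3}/C'_N$.
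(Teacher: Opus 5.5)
There is a genuine gap, and it lies exactly at the ``main obstacle'' you flag. It is structural, so no choice of $\rho$, $s$, $q$ fixes it.

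Your Step 1 is fine: for $\nu$ itself the point $e=\PP E_\nu$ is repelling in mean, so $d(x,e)^{-\alpha}$ contracts. But this is a $C^0$ statement about a fixed point, and $\nu'$ does not fix $e$. Suppose, for instance, that $\nu'$ is a small generic perturbation of a conjugate $h\nu h^{-1}$ with $\|h-I\|$ of order $s\varepsilon$. Then every $g'$ nearly fixes $e'=he$, so $d(g'x,e)/d(x,e)\approx1$ for $x$ near $e'$. Hence there is no repulsion from $e$ on a ball of radius comparable to $s\varepsilon$.

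Consequently the truncation scale $\rho\varepsilon$ must be at least of that order. On $E_\nu(e^{-C_N}\rho\varepsilon)$ the function $\varphi$ is constant at its maximum and you may have $P^N_{\nu'}\varphi=\varphi$, so the error coefficient $b$ in Step 2 cannot be smaller than $1-\kappa'$. Integrating against $\eta'$ uses stationarity together with the flux identity $\int_{X^c}\zeta\{g:gy\in X\}\,d\eta'(y)=\int_X b\,d\eta'\le\eta'(X)$ for the entering mass, which you need but do not state. This gives
\[
(1-\kappa')\int_{E_\nu(\varepsilon)}\varphi\,d\eta'\le b\,(\rho\varepsilon)^{-\alpha}\,\eta'(E_\nu(\rho'\varepsilon))+C\varepsilon^{-\alpha}\,\eta'(E_\nu(\varepsilon)).
\]
Chebyshev at scale $q\varepsilon$ turns the first term into $\frac{b}{1-\kappa'}(q/\rho)^{\alpha}\,\eta'(E_\nu(\rho'\varepsilon))$. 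With $\rho=q^2$ the coefficient is at least $q^{-\alpha}>1$, while $\eta'(E_\nu(\rho'\varepsilon))\le\eta'(E_\nu(q\varepsilon))$, so nothing follows. If instead $\rho\ge q$, then $\varphi$ is constant on $E_\nu(q\varepsilon)$ and Chebyshev gains nothing.

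In every regime, $E_\nu(q\varepsilon)$ contains the region $E_\nu(Cs\varepsilon)$ where you have no dynamical control. So a one-point function centred at $e$ cannot rule out that most of $\eta'(E_\nu(\varepsilon))$ sits at the perturbation scale. The bound $\int_{E_\nu(\varepsilon)}\varphi\,d\eta'\le C\varepsilon^{-\alpha}\eta'(E_\nu(\varepsilon))$ you assert in Step 3 does not follow from your drift.

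The missing idea is a mechanism insensitive to the loss of invariance. The paper uses pair expansion: for all distinct $x,x'\in E_\nu(\varepsilon)$, $d(gx,gx')\ge e^{\kappa_0n/2}d(x,x')$ with $\nu'^{(n)}$-probability $>1-\delta$. This is a derivative ($C^1$) property, robust under the support condition and valid at every scale, including below $s\varepsilon$. The paper exploits it through the following steps:
\begin{itemize}
\item a cutoff two-point logarithmic energy $\Psi_\varepsilon$ on self-couplings of the normalized restriction $\mu$ of $\eta'$ to $E_\nu(\varepsilon)$;
\item a localized kernel preserving $\mu$;
\item a recoupling of annular pairs, via annular escape, so that they do not both land in $E_\nu(\varepsilon')$;
\item a minimum-energy self-coupling $\widetilde m$, rescaled to total mass $M=\eta'(E_\nu(\varepsilon))$.
\end{itemize}
The drift inequalities give $\widetilde m(F^2)\le\theta M$ for $F=E_\nu(q\varepsilon)$. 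The marginal identity $\widetilde m(F^2)\ge2\eta'(F)-M$ then yields $\eta'(F)\le\frac{1+\theta}{2}M<\frac23M$. Mass concentrated in $F$ forces every self-coupling to charge $F^2$, where the pair energy strictly decreases. A function of a single point cannot see this effect.
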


Iterating this estimate bounds the mass of the stationary measure near the equator by a positive power of the radius, at scales for which the support condition remains valid.  Combined with estimates away from the equator and an approximation argument for general perturbations, it provides the stationary-measure bounds used to prove Theorem \ref{theorem.HolderCont}.

The proof of Theorem \ref{theorem.estimateadriyaya} is given in Section \ref{s:thmB}. The argument adapts the additive Margulis functions and recoupling technique developed by Avila, Eskin and Viana \cite{AEV2023} to obtain the required
quantitative estimate.

The restriction to a one-dimensional equator enters through Theorem \ref{theorem.estimateadriyaya}. An analogue of this estimate for higher-dimensional equators would yield pointwise Hölder continuity whenever the top Lyapunov exponent is simple. The main difficulty is that the higher-dimensional argument of \cite{AEV2023} proceeds through Grassmannians and flag spaces, whereas our proof requires a quantitative estimate for stationary measures on the projective space. It remains unclear how to derive the required projective estimate from the control obtained in those spaces.

\subsection*{Acknowledgements} This work corresponds to A.C.B.'s doctoral thesis, defended at IMPA on January 12, 2026. A.C.B. was supported by CAPES during her doctoral studies and has been supported by FUNCAP since February 2026.  A.S.C. and E.H.Y.T. were supported by Centro de Matemática Pura y Aplicada and Escuela de Matemática at University of Costa Rica. M.V. was partially supported by CNPq, FAPERJ and Institut de France/Prize Louis D. The first author would like to thank Professor Mauricio Poletti for many insightful discussions related to this work.

%% file: body/2Preliminaries.tex
In this section we introduce some key concepts and tools that are essential to this work. The results presented here are valid for any dimension $d\ge2$ and, unless stated otherwise, do not require the assumption of simplicity for the first exponent.

\subsection{Random matrix products}

Let $G = \mathrm{GL}(d, \mathbb{R})$ ($d \ge 2$) be endowed with the metric $\delta(g,g') = \|g-g'\| + \|g^{-1}-g'^{-1}\|$, where $\|\cdot\|$ is the operator norm. For $\theta \in (0,1]$, $\delta^\theta$ is also a metric on $G$. Let $\mathrm{Prob}_c(G)$ be the space of compactly supported probability measures on $G$ and take $\nu \in \mathrm{Prob}_c(G)$. We consider the sequence space $G^{\mathbb{N}}$ endowed with the product measure $\nu^{\mathbb{N}}$. 

We define the shift map $\sigma: G^{\mathbb{N}} \to G^{\mathbb{N}}$ by $\sigma((g_j)_{j \geq 0}) = (g_{j+1})_{j \geq 0}$. For a sequence $(g_j)_{j \geq 0} \in G^{\NN}$, we use, when convenient, the shorthand $\fg = (g_j)_{j \geq 0}$. Define also the projection onto the zeroth coordinate $A: G^\mathbb{N} \to G$ by $A(\fg) = g_0$, where $\fg = (g_j)_{j \geq 0}$. The linear cocycle $\mathcal{F}: G^{\mathbb{N}} \times \mathbb{R}^d \to G^{\mathbb{N}} \times \mathbb{R}^d$ generated by $A$ over $\sigma$ is defined by $$
\mathcal{F}(\fg, v) = (\sigma(\fg), A(\fg) v).$$ The $n$-th iterate satisfies $\mathcal{F}^n(\fg, v) = (\sigma^n(\fg), A^n(\fg) v)$, where $A^n(\fg) = A(\sigma^{n-1}(\fg))\ldots A(\sigma(\fg))A(\fg)$, which is exactly the product $g_{n-1} \cdots g_1 g_0$.

In this setting, the Furstenberg-Kesten Theorem (\cite{LLE}*{Theorem 3.12}) guarantees the existence, $\nu^\NN$-almost everywhere, of the limits
$$
    \lambda_+(\fg) = \lim_{n\to\infty} \frac{1}{n} \log\|A^n(\fg)\| \quad \text{and} \quad \lambda_-(\fg) = \lim_{n\to\infty} \frac{1}{n} \log\|(A^n(\fg))^{-1}\|^{-1},
$$ which are called the \textit{extremal Lyapunov exponents}. By ergodicity of the base dynamics $(\sigma, \nu^\NN)$, these limits are $\nu^{\mathbb{N}}$-almost everywhere constant. In this work, we are particularly interested in the top Lyapunov exponent $\lambda_+(\fg)$, which we denote from now on by $\lambda_1(\nu)$.

According to the Oseledets Multiplicative Ergodic Theorem (see, e.g., \cite{LLE}*{Theorem 4.1}), the compact support of $\nu$ ensures that for $\nu^\NN$-almost every sequence $\fg \in G^\NN$, the space $\RR^d$ admits a measurable and $\mathcal{F}$-invariant filtration called the Oseledets flag $\RR^d = V_\fg^1 \supset V_\fg^2 \supset \cdots \supset V_\fg^k \supset \{0\}$,
associated with the Lyapunov exponents $\lambda_1(\nu) > \lambda_2(\nu) > \dots > \lambda_k(\nu)$. These subspaces satisfy the condition
$$
\lim_{n \to \infty} \frac{1}{n} \log \|A^n(\fg)v\| = \lambda_i(\nu)
$$
for every vector $v \in V_\fg^i \setminus V_\fg^{i+1}$ and each $1 \le i \le k$ (with $V_\fg^{k+1} = \{0\}$). In this context, the multiplicity of each exponent $\lambda_i(\nu)$ is defined as the drop in dimension between consecutive elements of the filtration, namely $\dim V_\fg^i - \dim V_\fg^{i+1}$. To say that the top Lyapunov exponent $\lambda_1(\nu)$ is \textit{simple} means that its multiplicity is exactly one.

We also write the Lyapunov exponents counted with
multiplicity as
$$
    \chi_1(\nu)\geq\chi_2(\nu)\geq\cdots\geq\chi_d(\nu),
$$
where each distinct exponent $\lambda_i(\nu)$ is
repeated according to its multiplicity. In particular,
if $\lambda_1(\nu)$ is simple, then
$\chi_1(\nu)=\lambda_1(\nu)$ and
$\chi_2(\nu)=\lambda_2(\nu)$.

\begin{remark}[Second exterior power] 
\label{rem:secondexteriorpower}
We will also consider the action of random matrix
products on the second exterior power
$\bigwedge^2\RR^d$. Every $g\in G$ induces an invertible
linear map $\wedge^2g$, characterized by
$$
    (\wedge^2g)(u\wedge v)=gu\wedge gv.
$$
Since
$$
    \wedge^2A^n(\fg)
    =(\wedge^2g_{n-1})\cdots(\wedge^2g_0),
$$
this is again a random matrix product, with distribution
$$
    \tilde\nu=(\pi_{\wedge 2})_*\nu,
$$ with $\pi_{\wedge 2}(g)=\wedge^2g$. The continuity of $\pi_{\wedge 2}$ ensures that
$\tilde\nu$ has compact support. Its Lyapunov exponents,
counted with multiplicity, are the sums
$\chi_i(\nu)+\chi_j(\nu)$, with $1\leq i<j\leq d$
(see \cite{LLE}*{Proposition 4.17}). In particular,
$$
    \lambda_1(\tilde\nu)=\chi_1(\nu)+\chi_2(\nu).
$$
\end{remark}

 We equip $\mathrm{Prob}_c(G)$ with the topology $\mathfrak{T}$, defined as the smallest topology such that
\begin{enumerate}
    \item [(1)] The map $\nu\mapsto \int_G\psi d\nu$ is continuous for every continuous function $\psi:G\rightarrow \RR$, and
    \item [(2)] The map $\nu\mapsto \supp\nu$ is continuous relative to the Hausdorff topology in the space of compact subsets of $G$.
\end{enumerate} 

 This topology is metrized by the Wasserstein-Hausdorff distance
\begin{equation}\label{eq:definitionmetric}
\delta_{\mathfrak{T},\theta}(\nu,\nu') = \delta_\theta(\nu,\nu') + \delta_H(\mathrm{supp}\,\nu, \mathrm{supp}\,\nu'),
\end{equation}
where $\delta_\theta(\nu,\nu') = \sup_{\|\psi\|_{\mathrm{Lip}} \le 1} \left| \int_G \psi \, d(\nu-\nu') \right|$ is the $\theta$-Wasserstein distance and the Lipschitz constant $||\cdot||_{\mathrm{Lip}}$ is computed with respect to $\delta^\theta$. Moreover, $\delta_H$ is the Hausdorff metric defined for compact sets $K, K' \subset G$ by
$$
\delta_H(K,K') = \inf\{r > 0 : K \subset K'(r) \text{ and } K' \subset K(r)\},
$$
with $K(r) := \{g \in G : \delta(g,K) < r\}$.

\subsection{Markov operators}
\label{Markovoperatorsingeneral}

Let $(X, \ddx)$ be a metric space. Consider the Markov operator on the space of bounded Borel functions $\mathcal{T}:\mathfrak{B}(X)\rightarrow \mathfrak{B}(X)$ defined by
$$
    \mathcal{T}\varphi(x):=\int_X \varphi(y) d\sigma_x(y),
$$where $\{\sigma_x : x\in X\}$ is a family of probability measures on $X$. We say that $\mathcal{T}$ is \textit{continuous} when the map $x\mapsto \sigma_x$ is continuous with respect to the weak*-topology. Notice that $\cT$ is a bounded operator with $\|\cT\|=1$. The dual map $\mathcal{T}^*$ of $\mathcal{T}$ is defined on the set of bounded finitely additive signed measures $\eta$ on $X$ with the total variation norm (see \cite{DSI}*{IV.4.5}) by
$$
    \mathcal{T}^*\eta  := \int_X \sigma_y \,d\eta(y).
$$ We say that $\eta$ is $\cT$\textit{-invariant} if $\cT^*\eta=\eta$. Note that $\cT^*$ leaves the set of probability measures on $X$ invariant. These two operators are related by
$$
\int_X \mathcal{T}\varphi (y) d\eta(y) = \int_X \varphi(y) d(\mathcal{T}^*\eta)(y).
$$

Let $X$ and $X'$ be measurable spaces. Consider $\eta$ a measure on $X$ and $\eta'$ a measure on $X'$ with $\eta(X)=\eta'(X')$. A \textit{coupling} of $\eta$ and $\eta'$ is a measure $\tilde{\eta}$ on $X\times X'$ such that $\tilde{\eta}$ projects to $\eta$ on the first coordinate and to $\eta'$ on the second. In other words, if we define the coordinate maps
\begin{align*}
    \overline{\pi}_1: X\times X' &\rightarrow X  &\overline{\pi}_2: X\times X' &\rightarrow X' \\
    (x,x') & \mapsto x     &(x,x') & \mapsto x'
\end{align*}
then, we have that $\eta = {\overline{\pi}_1}_* \tilde{\eta}$ and $\eta' = {\overline{\pi}_2}_* \tilde{\eta}$. When $\eta=\eta'$ we say that $\tilde{\eta}$ is a \textit{self-coupling} of $\eta$. A self-coupling $\tilde{\eta}$ is \textit{symmetric} if it is invariant under the involution map $\iota:(x,x')\mapsto (x',x)$.

Now, consider the product space $X\times X$ with the product sigma-algebra. Let $\mathfrak{B}(X\times X)$ be the space of bounded functions on $X\times X$. A \textit{self-coupling} $\Tilde{\cT}$ of $\cT$ is an operator $\Tilde{\cT}:\mathfrak{B}(X\times X)\rightarrow \mathfrak{B}(X\times X)$ defined as
$$
    \Tilde{\cT}\psi(x,x'):=\int_{X\times X} \psi(y,y') d\sigma_{x,x'}(y,y'),
$$ where $\sigma_{x,x'}$ is a coupling of $\sigma_x$ and $\sigma_{x'}$. The self-coupling is \textit{continuous} if the map $(x,x')\mapsto \sigma_{x,x'}$ is continuous on $X\times X$. We can also define the dual operator $\Tilde{\cT}^*$ by
$$
    \Tilde{\mathcal{T}}^*\Tilde{\eta}  := \int_{X\times X} \sigma_{y,y'}\, d\Tilde{\eta}(y,y').
$$ We say that $\Tilde{\eta}$ is $\Tilde{\cT}$\textit{-invariant} if $\Tilde{\cT}^*\Tilde{\eta}=\Tilde{\eta}$.


Let $P=\mathbb{P}\RR^d=\Gr(1,d)$ be the space of all lines through the origin in $\RR^d$, endowed with the distance $\ddp(u,u') = |\sin\angle(u,u')|$. To simplify notation, we use the same symbol, for example $u$, to refer both to a nonzero vector in $\RR^d$ and to its corresponding point in the projective space $P$. Throughout this text, we will encounter projective spaces of the form $\mathbb{P}\mathcal{L}$, where $\mathcal{L}$ is a subspace of $\mathbb{R}^d$. We will also denote the distance on $\mathbb{P}\mathcal{L}$ by $\ddp$, and its definition remains the same.

The group $G$ acts on the projective space $P$ via the $G$-action $(g,u)\mapsto gu$, which is \textit{transitive}, meaning that for any fixed $u\in P$, the map $g\mapsto gu$ is surjective. This action describes the dynamics on the fibers of the projective cocycle $\PP\cF$, which is defined as follows
\begin{alignat}{2}
\label{eq.projectivecocycledefinition}
    \PP\cF: G^\NN\times P&\rightarrow G^\NN\times P \nonumber\\
    (\fg,u)&\mapsto (\sigma (\fg), A(\fg) u),
\end{alignat} where $\sigma$ is the shift-map and $A$ is the map $A:G^\NN\to G, (g_k)_{k\ge0}\mapsto g_0$, as before.

Consider the space $\operatorname{Prob}(P)$ of all probability measures on $P$ endowed with the Wasserstein metric. The $\theta$-\textit{Wasserstein distance} $\dtheta$ of two probability measures $\eta,\eta'\in \operatorname{Prob}(P)$ is defined as
\begin{equation*}
\dtheta(\eta,\eta') = \sup_{\|\phi\|_{\mathrm{Lip}} \le 1} \left| \int_P \phi \, d(\eta-\eta') \right|= \inf\left\{\int_{P\times P} \ddp(u,u')^\theta d\xi(u,u'): \xi \text{ is a coupling of } \eta \text{ and } \eta'\right\},
\end{equation*} where the Lipschitz constant $||\cdot||_{\mathrm{Lip}}$ is computed with respect to $\ddp^\theta$. The weak*-topology on $\operatorname{Prob}(P)$ is metrized by $\dtheta$, for every $\theta\in(0,1]$.

 Consider the family $\{\sigma_u:u\in P\}$ of probability measures on $P$, where $\sigma_u$ is the push-forward of $\nu$ under the map $g\mapsto gu$. These probabilities define a Markov operator, denoted by $\cP_\nu:\mathfrak{B}(P)\rightarrow\mathfrak{B}(P)$ and given by
\begin{equation}\label{eq.definitionMarkovoperator}
    \cP_\nu\varphi(u) := \int_P \varphi(v)\; d\sigma_u(v) = \int_G \varphi(gu) \;d\nu(g).
\end{equation}
We can also define the dual operator $\cP_\nu^*$, given by
\begin{equation}
    \label{eq.defmarkovdual}
    \cP_\nu^*\eta := \int_P \sigma_u\;d\eta(u) = \int_G g_*\eta\; d\nu(g),\qquad \eta\in \operatorname{Prob}(P).
\end{equation} In this definition, $g_*\eta$ denotes the push-forward of the measure $\eta$ under the map $u\mapsto gu$. Note that $\cP_\nu$ is continuous if the $G$-action is continuous.

Denote the product measure on $G^n$ by $\nu^n$, $n\ge1$.

\begin{definition}\label{definition.convolution}
The \emph{$n$-convolution} $\nu^{(n)}\in \operatorname{Prob}_c(G)$ of
a measure $\nu\in \operatorname{Prob}_c(G)$ is defined by
$$
\int_G \varphi(g) \, d\nu^{(n)}(g)
= \int_{G^n} \varphi(g_{n-1} \cdots g_0) \, d\nu^n(g_0,\ldots,g_{n-1}),
$$ for every $\varphi:G\rightarrow \RR$ bounded. Equivalently, $\nu^{(n)}$ is the push-forward of $\nu^n$ under the map $\paren{g_0,\ldots,g_{n-1}}\mapsto g_{n-1}\ldots g_0$. The map $\nu\mapsto\nu^{(n)}$ is continuous relative to the weak*-topology. 
\end{definition}

By a standard induction argument, the $n$-th iterate of the dual Markov operator $\cP_\nu^{*n}$ satisfies, for any $\eta\in \text{Prob}(P)$,
$$
    \cP_\nu^{*n} \eta  = \int_G  g_*\eta\, d\nu^{(n)}(g) = \cP_{\nu^{(n)}}^{*} \eta.
$$Moreover, since the base dynamics is i.i.d., the product measure $\nu^n$ is invariant under reversing the order of the coordinates. Hence, for every $\varphi\in \mathfrak{B}(P)$ and $u\in P$,
$$
    \cP_\nu^n\varphi(u)=\int_G\varphi(gu)\;d\nu^{(n)}(g) = \cP_{\nu^{(n)}}\varphi(u).
$$

We can define a $G$-action on the product space $P\times P$ by  $(g,u,u')\mapsto (gu,gu')$. Let $\{\sigma_{u,u'}: (u,u')\in P\times P\}$ be a family of probability measures on $P\times P$ such that $\sigma_{u,u'}$ is the push-forward of $\nu$ under the map $g\mapsto (gu,gu')$. This family defines a Markov operator $\tilde{\cP}_{\nu}:\mathfrak{B}(P\times P)\rightarrow \mathfrak{B}(P\times P)$ in the product space, as follows
\begin{equation}
    \tilde{\cP}_{\nu} \varphi(u,u') := \int_{P\times P} \varphi(v,v')\,d\sigma_{u,u'}(v,v') = \int_G \varphi(gu,gu')\,d\nu(g).
    \label{eq.markovoperatorproductspacestationary}    
\end{equation} The dual operator $\tilde{\cP}_{\nu}^*$ is defined as
$$
    \tilde{\cP_\nu^*}\tilde{\eta} := \int_{P\times P} \sigma_{u,u'}\, d\tilde{\eta}(u,u') = \int_G g_*\tilde{\eta}\,d\nu(g),\quad\tilde{\eta}\in\operatorname{Prob}(P\times P).
$$

The iterates of these operators satisfy $\tilde{\cP}_{\nu}^{n}=\tilde{\cP}_{\nu^{(n)}}$ and $\tilde{\cP}_{\nu}^{*n}=\tilde{\cP}_{\nu^{(n)}}^*$. Hence, for every bounded function $\varphi \in \mathfrak{B}(P\times P)$, any probability measure $\tilde{\eta}\in \text{Prob}(P\times P)$ and every $n \in \mathbb{N}$, we have:
\begin{equation}\label{eq:ncouplingexpression}
    \int_{P\times P} \varphi(u,u')\, d(\tilde{\cP}_\nu^{*n}\tilde{\eta})(u,u') = \int_{G\times P\times P} \varphi(gu,gu')\, d\nu^{(n)}(g)d\tilde{\eta}(u,u').
\end{equation} We say that $\tilde\eta$ is $\tilde\cP_\nu$\textit{-invariant} if $\tilde\cP_\nu^*\tilde\eta=\tilde\eta$.

\begin{lemma}
\label{lemma.selfcouplingofthemarkovoperator}
   The operator $\tilde{\cP}_\nu^{*n}$ is a self-coupling of $\cP_\nu^{*n}$.
\end{lemma}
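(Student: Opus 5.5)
The plan is to check the definition of a self-coupling directly for the iterated operators, using the convolution identities already established. By those identities, $\tilde{\cP}_\nu^{*n}=\tilde{\cP}_{\nu^{(n)}}^*$ and $\cP_\nu^{*n}=\cP_{\nu^{(n)}}^*$. So it suffices to prove the case $n=1$ for an arbitrary $\mu\in\Prob_c(G)$, and then apply it with $\mu=\nu^{(n)}$.

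For $\mu$ fixed, $\tilde{\cP}_\mu$ is given by the family $\sigma_{u,u'}=(g\mapsto(gu,gu'))_*\mu$, and $\cP_\mu$ by $\sigma_u=(g\mapsto gu)_*\mu$. We must show that $\sigma_{u,u'}$ is a coupling of $\sigma_u$ and $\sigma_{u'}$. Take a bounded Borel $\varphi$ on $P$. Then
\begin{equation*}
\int_{P\times P}\varphi\circ\overline{\pi}_1\,d\sigma_{u,u'}=\int_G\varphi(gu)\,d\mu(g)=\int_P\varphi\,d\sigma_u .
\end{equation*}
This uses only the change of variables for push-forwards and the identity $\overline{\pi}_1(gu,gu')=gu$. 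The same computation with $\overline{\pi}_2$ gives $\sigma_{u'}$. Hence $(\overline{\pi}_i)_*\sigma_{u,u'}$ equals $\sigma_u$ or $\sigma_{u'}$ respectively, which is exactly the definition of a self-coupling.

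It is also worth recording the dual-level statement, since that is the form in which the lemma will presumably be used. Let $\tilde\eta$ be a coupling of $\eta,\eta'\in\Prob(P)$. By \eqref{eq:ncouplingexpression} applied to $\varphi\circ\overline{\pi}_1$,
\begin{equation*}
\int_{P\times P}\varphi(u)\,d(\tilde{\cP}_\nu^{*n}\tilde\eta)=\int_{G\times P\times P}\varphi(gu)\,d\nu^{(n)}(g)\,d\tilde\eta(u,u')=\int_P\cP_\nu^n\varphi\,d\eta=\int_P\varphi\,d(\cP_\nu^{*n}\eta).
\end{equation*}
The middle step integrates out $u'$, which is allowed because $\varphi(gu)$ does not depend on $u'$ and $(\overline{\pi}_1)_*\tilde\eta=\eta$. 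The second marginal is handled symmetrically. Thus $\tilde{\cP}_\nu^{*n}\tilde\eta$ is a coupling of $\cP_\nu^{*n}\eta$ and $\cP_\nu^{*n}\eta'$.

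There is no real obstacle. The only care needed is measurability, so that Fubini applies on $G\times P\times P$. This follows from the continuity of the action $(g,u)\mapsto gu$ and the boundedness of $\varphi$.
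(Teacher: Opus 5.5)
Your proposal is correct and follows essentially the same route as the paper. Both arguments use $\cP_\nu^{*n}=\cP^*_{\nu^{(n)}}$ and $\tilde\cP_\nu^{*n}=\tilde\cP^*_{\nu^{(n)}}$ to reduce to checking that the push-forward of $\nu^{(n)}$ under $g\mapsto(gu,gu')$ has marginals equal to the push-forwards under $g\mapsto gu$ and $g\mapsto gu'$. The paper tests this with indicators $\mathds{1}_{\cE\times P}$, whereas you use bounded Borel functions. Your extra dual-level computation is exactly the content of the next lemma, Lemma~\ref{lemma:ndiagonalpushforward}, which the paper obtains by citing \cite{AEV2023}*{Lemma 6.18} rather than computing it directly.
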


\begin{proof}
We know that $\cP^{*n}_\nu=\cP^*_{\nu^{(n)}}$, which is the Markov operator associated with the family of probability measures $\{\sigma_{u,(n)}: u \in P\}$, where $\sigma_{u,(n)}=$ push-forward of $\nu^{(n)}$ under the map $g\mapsto gu$. Analogously, we have $\tilde\cP^{*n}_\nu=\tilde\cP^*_{\nu^{(n)}}$, which is associated with the family $\{\sigma_{u,u',(n)}:(u,u')\in P\times P\}$, where $\sigma_{u, u',(n)}=$ push-forward of $ \nu^{(n)}$ under the map $g\mapsto (gu,gu')$. Let $\pi_1$ and $\pi_2$ be the coordinate maps
    \begin{align} \label{eq.definitionofprojectionspi}
         \pi_1: P\times P &\rightarrow P  &\pi_2: P\times P &\rightarrow P \\
        (u,u') & \mapsto u     &(u,u') & \mapsto u'.\nonumber
    \end{align}  

     We want to prove that $\sigma_{u, u',(n)}$ is a coupling of $\sigma_{u,(n)}$ and $\sigma_{u',(n)}$ for every $u,u'\in P$ and every $n\ge 1$. In other words, we want to prove that, for every $u,u'\in P$ and for all $n\ge1$, we have
    $$
            {\pi_1}_* \sigma_{u, u',(n)} = \sigma_{u, (n)}\quad\textnormal{and}\quad {\pi_2}_* \sigma_{u, u',(n)} = \sigma_{u',(n)}.
    $$

 Given $u, u'\in P$, we introduce the maps $f_u:G \rightarrow P$ such that $f_u(g)=gu$ and $f_{u,u'}:G \rightarrow P\times P$ such that $f_{u,u'}(g)=(gu,gu')$. Then, by definition, we have $\sigma_{u,(n)}:=(f_u)_*\nu^{(n)}$ and $\sigma_{u, u',(n)} := (f_{u,u'})_*\nu^{(n)}$. Let $\cE$ be a measurable subset of $P$. We have that
    \begin{equation*}
        {\pi_1}_*\sigma_{u, u',(n)}(\cE)
        = \int_G \mathds{1}_{\cE\times P}(f_{u,u'}(g))\; d\nu^{(n)}(g) = \int_G \mathds{1}_\cE(f_u(g))\;d\nu^{(n)}(g)= \sigma_{u,(n)}(\cE).
    \end{equation*} Thus, the equality ${\pi_1}_* \sigma_{u, u',(n)} = \sigma_{u,(n)}$ holds for every $(u,u')\in P\times P$ and for all $n\ge 1$. A similar argument guarantees that the second equality ${\pi_2}_* \sigma_{u, u',(n)} = \sigma_{u',(n)}$ is also true for every $(u,u')\in P\times P$ and for all $n\ge 1$. This concludes the proof.

\end{proof}

\begin{lemma}
\label{lemma:ndiagonalpushforward}
    If $\tilde{\eta}$ is a coupling of $\eta$ and $\eta'$, then $\tilde{\cP}_\nu^{*n}\tilde{\eta}$ is a coupling of $\cP^{*n}_\nu\eta$ and $\cP^{*n}_\nu\eta'$, for every $n\in\NN, n\ge1$.
\end{lemma}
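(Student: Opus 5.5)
The plan is to verify the two marginal identities directly from the integral formula \eqref{eq:ncouplingexpression}. By definition, we must show that
$$
{\pi_1}_*\bigl(\tilde{\cP}_\nu^{*n}\tilde\eta\bigr)=\cP_\nu^{*n}\eta
\quad\text{and}\quad
{\pi_2}_*\bigl(\tilde{\cP}_\nu^{*n}\tilde\eta\bigr)=\cP_\nu^{*n}\eta',
$$
with $\pi_1,\pi_2$ the coordinate projections of \eqref{eq.definitionofprojectionspi}. Since probability measures on $P$ are determined by their integrals against bounded Borel functions, it suffices to test both sides against an arbitrary $\varphi\in\mathfrak{B}(P)$.

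For the first marginal, fix $\varphi\in\mathfrak{B}(P)$ and apply \eqref{eq:ncouplingexpression} to the bounded function $\varphi\circ\pi_1$ on $P\times P$:
$$
\int_P\varphi\,d\bigl({\pi_1}_*\tilde{\cP}_\nu^{*n}\tilde\eta\bigr)
=\int_{G\times P\times P}\varphi(gu)\,d\nu^{(n)}(g)\,d\tilde\eta(u,u').
$$
The integrand does not depend on $u'$. Using Fubini (the integrand is bounded and measurable, and all measures are probabilities) together with ${\pi_1}_*\tilde\eta=\eta$, this becomes
$$
\int_{G\times P}\varphi(gu)\,d\nu^{(n)}(g)\,d\eta(u)=\int_P\varphi\,d\Bigl(\int_G g_*\eta\,d\nu^{(n)}(g)\Bigr).
$$
By the identity $\cP_\nu^{*n}\eta=\int_G g_*\eta\,d\nu^{(n)}(g)$ established before Lemma~\ref{lemma.selfcouplingofthemarkovoperator}, the right-hand side equals $\int_P\varphi\,d(\cP_\nu^{*n}\eta)$. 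The second marginal is handled identically with $\varphi\circ\pi_2$, using ${\pi_2}_*\tilde\eta=\eta'$.

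Alternatively, one can argue fiberwise. Lemma~\ref{lemma.selfcouplingofthemarkovoperator} gives ${\pi_i}_*\sigma_{u,u',(n)}=\sigma_{u,(n)}$ (respectively $\sigma_{u',(n)}$). Integrating this identity against $\tilde\eta$ and using its marginals yields the same conclusion.

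I do not expect a real obstacle here. The only point needing care is the Fubini/measurability justification, which is routine since $(g,u,u')\mapsto\varphi(gu)$ is a bounded Borel function.
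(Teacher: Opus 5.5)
Your argument is correct, and it reaches the result by a slightly different route than the paper. The paper does no computation here. It combines Lemma~\ref{lemma.selfcouplingofthemarkovoperator} (the fiberwise statement that $\sigma_{u,u',(n)}$ couples $\sigma_{u,(n)}$ and $\sigma_{u',(n)}$) with the abstract fact \cite{AEV2023}*{Lemma~6.18}. That fact says a self-coupling of a Markov operator sends couplings of $\eta,\eta'$ to couplings of their images. This is precisely your ``alternative'' fiberwise argument.

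Your main computation tests $\varphi\circ\pi_1$ against \eqref{eq:ncouplingexpression} and uses that $\varphi(gu)$ does not depend on $u'$. It is the same idea unwound in this concrete setting. What it buys is self-containment: it needs neither the external lemma nor Lemma~\ref{lemma.selfcouplingofthemarkovoperator}, only \eqref{eq:ncouplingexpression} and the identity $\cP_\nu^{*n}\eta=\int_G g_*\eta\,d\nu^{(n)}(g)$. The paper's version is shorter and makes explicit that the conclusion holds for any self-coupling of any Markov operator.
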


\begin{proof}
    This is a direct consequence of  \cite{AEV2023}*{Lemma 6.18} and Lemma \ref{lemma.selfcouplingofthemarkovoperator}.
\end{proof}

\begin{definition}
\label{definition.stationarymeasure}
We say that $\eta \in \operatorname{Prob}(P)$ is $\nu$\textit{-stationary} if it is a fixed point of the dual Markov operator $\cP_\nu^*$, \textit{i.e.}, $\mathcal{P}_\nu^*\eta=\eta$.     
\end{definition}

The set of all stationary measures of $\nu$ is denoted by $\operatorname{Stat}(\nu)$. Notice that a $\nu$-stationary measure $\eta$ is a special case of an invariant measure for a Markov operator. In fact, $\eta$ is called $\nu$-stationary when it is invariant under the particular Markov operator $\cP_\nu$.

\begin{remark}\label{remark.aboutstationarymeasures}
    A probability measure $\eta$ is $\nu$-stationary if and only if the product measure $\nu^{\mathbb{N}}\times\eta$ is invariant under the projective cocycle $\mathbb{P}\mathcal{F}$ (see \cite{LLE}*{Proposition 5.5}). The compactness of $P$ ensures the existence of stationary measures (see \cite{LLE}*{Proposition 5.6}). Moreover, a $\nu$-stationary measure is \textit{ergodic} if every function $\varphi:P\rightarrow \RR$ with $\cP\varphi=\varphi$ is constant on a full $\eta$-measure set. Equivalently, a $\nu$-stationary measure $\eta$ is ergodic if and only if the system $(\PP\cF, \nu^\NN\times \eta)$ is ergodic (see \cite{LLE}*{Proposition 5.13}). 
\end{remark}

Consider the map
\begin{align} \label{def.Phi}
    \Phi: G^\NN\times P &\rightarrow \RR \\
    (\fg,u)&\mapsto \log\frac{\|g_0u\|}{\|u\|},
\end{align} where $\fg=(g_k)_{k\ge0}$. Since this function only depends on the first term of the sequence, we may identify $\Phi$ with the corresponding map on $G\times P \to \RR$.

By \cite{LLE}*{Proposition 6.7}), we have
\begin{equation}
\label{eq.lambdaonemaximumintegral}
     \lambda_1(\nu)
    = \max\left\{ \int_{G^\mathbb{N} \times P} \Phi \, d(\nu^\mathbb{N} \times \eta) :\; \eta \ \nu\text{-stationary} \right\}.
\end{equation}
Moreover, there exists a $\nu$-stationary measure $\eta$ attaining this maximum.  
In this case, we say that $\eta$ is \emph{maximal}, and we get
\begin{equation}
    \label{eq.lambadaoneintegralformula}
    \lambda_1(\nu)
    = \int_{G^\mathbb{N} \times P} \Phi \, d(\nu^\mathbb{N} \times \eta)
    = \int_{G \times P} \log\frac{\|g u\|}{\|u\|} \, d(\nu \times \eta).
\end{equation} 

Maximal stationary measures are essential to the development of this work exactly because they provide the integral formula \eqref{eq.lambadaoneintegralformula} for the system's top Lyapunov exponent. This representation is the key to obtaining an estimate for the difference between $\lambda_1(\nu)$ and $\lambda_1(\nu')$, with $\nu'$ near $\nu$.

\subsection{Generic measures}
\label{subsection:generic-measures}

We consider the real algebraic varieties
$P=\mathbb{P}\mathbb{R}^{d}$ and
$G=\operatorname{GL}(d,\mathbb{R})$.
Throughout this subsection, measures are finite positive
Borel measures. Let $X$ be one of the algebraic varieties above.

\begin{definition}
\label{def:zariski-dense}
A subset $S\subset X$ is \emph{Zariski-dense} if it is
not contained in any proper algebraic subvariety of $X$.
\end{definition}

\begin{definition}
\label{def:generic-measure}
A measure $\eta$ on $X$ is \emph{generic} if
$\eta(S)=0$ for every proper algebraic
subvariety $S\subset X$.
\end{definition}

In particular, every generic measure on $X=P$ or $X=G$ is non-atomic,
since each point is a proper algebraic subvariety of $X$. We apply this terminology both to probability measures
$\nu$ on $G$ and to stationary probability measures
$\eta$ on $P$. 

\begin{remark}[Properties of generic measures]
\label{rem:generic-measures}
\phantom{lalala}
\begin{enumerate}
    \item[(i)]
    Every generic probability measure on $X$ has
    Zariski-dense support.

    \item[(ii)]
    If $\nu\in\operatorname{Prob}_c(G)$ has Zariski-dense
    support, then it admits a unique stationary probability
    measure on $P$, and this measure is generic
    (see \cite{AEV2023}*{Proposition 5.2} and
    \cite{GM1989}*{Lemmas 4.2 and 4.5}).
    In particular, this holds whenever $\nu$ is generic.
    
    \item[(iii)]
    Generic probability measures are dense in
    $\operatorname{Prob}_c(G)$ with respect to the topology
    $\mathfrak T$: every $\nu\in\operatorname{Prob}_c(G)$
    is the limit, in this topology, of a sequence of generic
    probability measures
    (see \cite{AEV2023}*{Proposition 5.3}).   
\end{enumerate}
\end{remark}

\subsection{The \texorpdfstring{$\nu$}{nu}-equator}

Here, we introduce an invariant subspace of $\RR^d$, called the $\nu$\textit{-equator}, which will play a central role in this work.

\begin{definition}
    Let $\mathcal{L}$ be a linear subspace of $\RR^d$. We say that $\mathcal{L}$ is \textit{$\nu$-invariant} if $g\mathcal{L}=\mathcal{L}$ for $\nu$-almost every $g\in G$, or equivalently, for every $g\in\supp\nu$.
\end{definition}

 Notice that $\mathcal{L}=\RR^d$ is trivially $\nu$-invariant. Here, we use the same notation $\mathcal{L}$ for a linear subspace of $\RR^d$ and for its corresponding subset of $P$.

\begin{definition} \label{definition.equator}
    The $\nu$\textit{-equator} $E_\nu$ is the maximal 
    $\nu$-invariant subspace of $\RR^d$ such that
    \begin{equation}
    \label{definitionofequator}
        \lim_{n\rightarrow \infty} \frac{1}{n} \log\|g_{n-1}\ldots g_0 |_{E_\nu}\|< \lambda_1(\nu)
    \end{equation} for $\nu^\NN$-almost every $\fg=(g_k)_{k\ge 0} \in G^\NN$.
\end{definition} The motivation for this definition comes from \cite{FK83}*{Theorem 3.9}, which establishes a deterministic filtration of subspaces. Furthermore, when the measure $\nu$ is clear from the context, we will refer to $E_\nu$ simply as \emph{the equator}.

\begin{remark}\label{remark:equatorproperties}
    \begin{enumerate}

        \item The equator is unique. Indeed, this follows from the maximality in the definition and from the fact that, if $\mathcal{L}$ and $\mathcal{L'}$ are $\nu$-invariant subspaces satisfying \eqref{definitionofequator}, then their sum $\mathcal{L} + \mathcal{L'}$ also satisfies it. 
    
        \item Although $E_\nu$ is a subspace of $V^2_{\fg}$ for $\nu^{\mathbb{N}}$-almost every $\fg \in G^{\mathbb{N}}$, where $V^2_{\fg}$ denotes the second subspace in the Oseledets flag, it may not coincide with any of the Oseledets subspaces (see \cite{AEV2023}, Example 3.2).
 
    \end{enumerate}
\end{remark}

Another key result of Furstenberg and Kifer \cite{FK83}*{Theorem 3.5} establishes the following dichotomy. Let $u\in \RR^d\setminus\{0\}$, for $\nu^\NN$-almost every $\fg\in G^\NN$,
\begin{equation}
\label{eq:dichotomy}
     \lim\limits_{n\rightarrow \infty}\frac 1 n \log\|A^n(\fg)u\|\left\{\begin{matrix}
        =\lambda_1(\nu),\; \textnormal{if}\; u\notin E_\nu;
        \\
        <\lambda_1(\nu),\; \textnormal{if}\; u\in E_\nu.
    \end{matrix}\right.
\end{equation} Hence, when the $\nu$-equator has dimension zero, the limit above equals the first Lyapunov exponent $\lambda_1(\nu)$ for every $u\in \RR^d$ and $\nu^\NN$-almost every $\fg$.

The following proposition uses the $\nu$-equator to characterize maximal $\nu$-stationary measures: the maximal ones are exactly those that assign weight zero to $E_\nu$.

\begin{proposition}
\label{prop.groundediffmaximal}
    Let $\nu\in \operatorname{Prob}_c(G)$ and $\eta\in \operatorname{Stat}(\nu)$. Then, $\eta (E_\nu)=0$ if and only if $\eta$ is maximal.
\end{proposition}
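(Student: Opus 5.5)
We have to prove that a $\nu$-stationary measure $\eta$ is maximal exactly when $\eta(E_\nu)=0$. The plan is to compute $\int\Phi\,d(\nu^\NN\times\eta)$ through the pointwise growth rates of vectors, using the dichotomy \eqref{eq:dichotomy}.

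\textbf{Ergodic decomposition.} Fix $\eta\in\Stat(\nu)$. Since $\eta$ is stationary, $\nu^\NN\times\eta$ is $\PP\cF$-invariant (Remark \ref{remark.aboutstationarymeasures}). The Birkhoff sums of $\Phi$ telescope:
$$\sum_{j<n}\Phi(\PP\cF^j(\fg,u))=\log\|A^n(\fg)u\|-\log\|u\|.$$
By Birkhoff's theorem, $\frac1n\log\|A^n(\fg)u\|$ therefore converges $\nu^\NN\times\eta$-a.e. to a function $\tilde\Phi$ satisfying
$$\int\tilde\Phi\,d(\nu^\NN\times\eta)=\int\Phi\,d(\nu^\NN\times\eta).$$

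\textbf{Identify the limit.} Apply Fubini together with \eqref{eq:dichotomy}. For each fixed $u\notin E_\nu$ we get $\tilde\Phi(\fg,u)=\lambda_1(\nu)$ for $\nu^\NN$-a.e. $\fg$. For $u\in E_\nu$, the vector stays in the invariant subspace $E_\nu$, so
$$\tilde\Phi(\fg,u)\le\lim\tfrac1n\log\|A^n(\fg)|_{E_\nu}\|=:\lambda_{E}<\lambda_1(\nu)$$
almost surely, by Definition \ref{definition.equator}. Here $\lambda_E$ is a constant by Furstenberg–Kesten applied to the restricted cocycle. It follows that
$$\int\Phi\,d(\nu^\NN\times\eta)\le \lambda_1(\nu)\,\eta(P\setminus E_\nu)+\lambda_E\,\eta(E_\nu),$$
with equality in the first term. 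Hence, if $\eta(E_\nu)=0$, the integral equals $\lambda_1(\nu)$ and $\eta$ is maximal by \eqref{eq.lambdaonemaximumintegral}.

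\textbf{Converse.} Suppose $\eta(E_\nu)=c>0$. Then
$$\int\Phi\,d(\nu^\NN\times\eta)\le\lambda_1(\nu)-c\,(\lambda_1(\nu)-\lambda_E)<\lambda_1(\nu).$$
This inequality is strict because $\lambda_E<\lambda_1(\nu)$. So $\eta$ is not maximal, since the maximum in \eqref{eq.lambdaonemaximumintegral} equals $\lambda_1(\nu)$. If $E_\nu=\{0\}$, then $E_\nu$ is empty in $P$ and every stationary measure is maximal, which is consistent with the statement.

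\textbf{Main obstacle.} The delicate step is the Fubini argument: \eqref{eq:dichotomy} is an almost-sure statement for each fixed $u$, and it must be turned into an $\eta$-a.e. statement. This works because the exceptional set is measurable in $(\fg,u)$, the vector $u$ is independent of $\fg$ under the product measure, and $\frac1n\log\|A^n(\fg)u\|$ is jointly measurable. One also needs $\lambda_E$ to be well defined (possibly $\le\lambda_E$ when $E_\nu$ is trivial in an appropriate sense), which follows from Furstenberg–Kesten for the cocycle restricted to the invariant subspace $E_\nu$.
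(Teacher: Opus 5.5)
Your proposal is correct and follows the paper's proof. You apply Birkhoff's theorem to $\nu^\NN\times\eta$ to replace $\int\Phi$ by the integral of the pointwise growth rate, then use the dichotomy \eqref{eq:dichotomy} separately on $P\setminus E_\nu$ and on $E_\nu$. The only difference is that you bound the rate on $E_\nu$ by the Furstenberg--Kesten constant $\lambda_E<\lambda_1(\nu)$, which makes the strict inequality in the converse explicit; the paper instead integrates the almost-everywhere strict pointwise inequality over a set of positive measure.
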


\begin{proof}
Let $\nu\in \operatorname{Prob}_c(G)$ and $\eta\in \operatorname{Stat}(\nu)$. We have
    \begin{align*}
        \int_{G\times P} \log{\frac{\|gu\|}{\|u\|}} d\nu(g)d\eta(u) &= \int_{G^\NN\times P} \log{\frac{\|g_0 u\|}{\|u\|}} \; d(\nu^\NN \times\eta)(\fg,u) = \int_{G^\NN\times P} \Phi(\fg,u) \; d(\nu^\NN \times\eta)(\fg,u).
     \end{align*} The map $\Phi$, defined in \eqref{def.Phi}, is integrable with respect to $\nu^\NN\times\eta$, and this product measure is invariant under the cocycle $\PP\cF$. Therefore, by the Birkhoff Ergodic Theorem, we have
     \begin{align}
        \int_{G\times P} \log{\frac{\|gu\|}{\|u\|}} d\nu(g)d\eta(u) &= \int_{G^\NN\times P}\; \lim_{n\rightarrow \infty}\frac{1}{n}\sum_{i=0}^{n-1}\Phi(\cF^i(\fg,u)) \; d(\nu^\NN \times\eta)(\fg,u)\\[2pt]
        &=
    \int_{G^\NN\times P}
    \lim_{n\to\infty}\frac1n
    \log\frac{\|A^n(\fg)u\|}{\|u\|}
    \,d(\nu^\NN\times\eta)(\fg,u)\\[2pt]
        \label{eq:integralE+PminusE} &= \int_{G^\NN\times (P\setminus E_\nu)} \;\lim_{n\to\infty}\frac1n
    \log\frac{\|A^n(\fg)u\|}{\|u\|}\; d(\nu^\NN \times\eta)(\fg,u)\\ 
        & \quad\qquad + \int_{G^\NN\times E_\nu}\; \lim_{n\to\infty}\frac1n
    \log\frac{\|A^n(\fg)u\|}{\|u\|}\; d(\nu^\NN \times\eta)(\fg,u)
    \end{align} By \eqref{eq:dichotomy}, given $u\in \RR^d\setminus\{0\}$ a representative of $u\in P$, for
$\nu^\NN$-almost every $\fg$ we have 
$$
    \lim_{n\to\infty}\frac1n
    \log\frac{\|A^n(\fg)u\|}{\|u\|}
    \begin{cases}
        =\lambda_1(\nu), & \text{if }u\notin E_\nu,\\
        <\lambda_1(\nu), & \text{if }u\in E_\nu.
    \end{cases}
$$
Now assume that $\eta(E_\nu)=0$.
Then $\eta(P\setminus E_\nu)=1$. Thus,
    \begin{equation*}
        \int_{G\times P} \log{\frac{\|gu\|}{\|u\|}} d\nu(g)d\eta(u) = \int_{P\setminus E_\nu} \lambda_1(\nu) d\eta(u)= \lambda_1(\nu) \eta(P\setminus E_\nu)= \lambda_1(\nu).
    \end{equation*} This proves the first implication. In order to prove the reverse implication we will argue by contraposition: if $\eta(E_\nu)\ne 0$, then $\eta$ is not maximal.

    Consider a $\nu$-stationary measure $\eta$ such that $\eta(E_\nu)=m>0$. Hence, $\eta(P\setminus E_\nu)=1-m$. By the preceding almost-everywhere inequalities and
\eqref{eq:integralE+PminusE}, we get
    \begin{equation*}
    \int\limits_{G\times P} \log\frac{\|gu\|}{\|u\|} d\nu(g)d\eta(u) < \lambda_1(\nu)(1-m) + \lambda_1(\nu)m = \lambda_1(\nu).
    \end{equation*} Hence, $\eta$ is not maximal.
\end{proof}

\begin{remark}[Uniqueness of the maximal stationary measure] \label{remark.uniquenessofmaximalstationary}
    Aoun and Guivarc'h \cite{aoun2020random}*{Theorem 2.4 (a)} proved that when $\lambda_1(\nu)$ is simple, there exists a unique $\nu$-stationary measure $\eta$ on the projective space $P$ satisfying $\eta(E_\nu)=0$. Since Proposition \ref{prop.groundediffmaximal} identifies this very condition as the characterization of maximality, it follows that the simplicity of $\lambda_1(\nu)$ guarantees the existence of a unique maximal $\nu$-stationary measure, which we will denote by $\eta_\nu$.
\end{remark}

For each $g\in \supp\nu$, the restriction of $g$ to $E_\nu$ defines a linear transformation $g^E \in GL(E_\nu)$ . Let $E_\nu^\perp$ be the orthogonal complement of $E_\nu$. Since $\RR^d=E_\nu\oplus E_\nu^\perp$, any vector $u\in \RR^d$ can be uniquely decomposed as a sum $u=u_E+u_\perp$, where $u_E\in E$ and $u_\perp\in E^\perp$. With respect to this decomposition, each $g\in \supp\nu$ has a block upper-triangular representation:
\begin{equation}\label{eq.representationofg}
    g = \begin{pmatrix} g^E & h(g)\\0&g^\perp\end{pmatrix},    
\end{equation} where $g^\perp\in GL(E_\nu^\perp)$ is the induced linear operator on $E_\nu^\perp$, defined for $v^\perp\in E_\nu^\perp$ by $g^{\perp}(v^\perp)=\operatorname{Proj}_{E_\nu^\perp}(gv^\perp)$. This construction yields natural projection maps
\begin{align} \label{eq.definitionpiEandpiEperp}
    \pi_E:\supp\nu &\rightarrow GL(E_\nu)\qquad\qquad\text{and}\qquad \!\!\!\!\!\!\!\!\!\!\!\!\!\!\!\!\!\!\!\!\!\!&
    \pi_\perp:\supp\nu &\rightarrow GL(E_\nu^\perp)\\
    g&\mapsto g^E &g&\mapsto g^\perp
\end{align} and corresponding push-forward measures $\nu^E\coloneqq (\pi_E)_*\nu$ and $\nu^\perp\coloneqq(\pi_\perp)_*\nu$, whose supports are given by the images of $\supp\nu$:
\begin{equation}\label{eq.relationbetweensupportsequatorandperp}
    \supp\nu^E = \pi_E(\supp\nu) \quad\text{and}\quad \supp\nu^\perp= \pi_\perp(\supp\nu).
\end{equation} Thus, for any $g \in \supp\nu$, decomposed as in \eqref{eq.representationofg}, its components must lie in the corresponding supports: $g^E \in \supp\nu^E$ and $g^\perp \in \supp\nu^\perp$. Conversely, given $g^\perp\in \supp\nu^\perp$, there exists $g\in\supp\nu$ such that $g^\perp=\pi_\perp(g)$, and a similar statement holds for $\supp\nu^E$. Moreover, products of elements in $\supp\nu$ preserve the block structure shown in \eqref{eq.representationofg}. Indeed, for every $n\in \NN$, given $g_0,\ldots, g_{n-1}\in \supp\nu$, we have
\begin{equation}\label{eq.representationEperpproduct}
    g_{n-1}\ldots g_0 = \begin{pmatrix}
            g_{n-1}^E\ldots g_0^E & &h(g_{n-1}\cdots g_1g_0)\\
            0&& g_{n-1}^\perp\ldots g_0^\perp
        \end{pmatrix},
\end{equation} where 
\begin{align} \label{eq.definitionofhn}
h(g_{n-1}\ldots g_1g_0) =& \;\;g^E_{n-1}\ldots g^E_{1} h(g_0) + g^E_{n-1}\ldots g^E_{2} h(g_1)g^\perp_{0} +\\ &\;\;\cdots + g^E_{n-1}h(g_{n-2})g^\perp_{n-3}\ldots g^\perp_1g^\perp_0 + h(g_{n-1})g^\perp_{n-2}\ldots g^\perp_0.    
\end{align}

By Definition \ref{definition.convolution}, the support of the $n$-convolution $\nu^{(n)}$ is given by
$$
        \supp\nu^{(n)}=\{g_{n-1}\ldots g_0: g_i\in \supp\nu, \forall i\in\{0,\ldots,n-1\}\}.
$$ Hence, every matrix $g \in \supp\nu^{(n)}$ admits the block-triangular decomposition shown in \eqref{eq.representationEperpproduct}. Moreover, its diagonal blocks, $g^E$ and $g^\perp$, are themselves elements of the supports of the corresponding projected $n$-convolutions $(\nu^E)^{(n)}$ and $(\nu^\perp)^{(n)}$. Furthermore, consider the functions 
\begin{align} 
    \qquad\Lambda^\perp : (\supp\nu)^{\mathbb{N}} &\longrightarrow GL(E_\nu^\perp)^{\mathbb{N}} &  \Lambda^E : (\supp\nu)^{\mathbb{N}} &\longrightarrow GL(E_\nu)^{\mathbb{N}} \\ \label{eq:def_Lambda}
    (g_k)_k &\longmapsto (\pi_\perp(g_k))_k = (g_k^\perp)_k
    &(g_k)_k &\longmapsto (\pi_E(g_k))_k = (g_k^E)_k.
\end{align} These maps relate to the product measures $\nu^{\mathbb{N}}, (\nu^E)^{\mathbb{N}}$ and $(\nu^\perp)^{\mathbb{N}}$ as follows:
\begin{equation} \label{eq:pushforwardmeasuressequences}
    (\nu^\perp)^{\mathbb{N}}  = ((\pi_\perp)_*\nu)^{\mathbb{N}} = (\Lambda^\perp)_*(\nu^{\mathbb{N}}) \quad \text{and}\quad
    (\nu^E)^{\mathbb{N}}  = ((\pi_E)_*\nu)^{\mathbb{N}} = (\Lambda^E)_*(\nu^{\mathbb{N}}).
\end{equation}

\subsection{Dynamics on \texorpdfstring{$E_\nu^\perp$}{Eperp}} \label{subsectiondynamicsonperp}

A crucial result from Furstenberg and Kifer \cite{FK83}*{Lemma 3.6} states that $\lambda_1(\nu)=\max\{\lambda_1(\nu^E), \lambda_1(\nu^\perp)\}$. Moreover, by the Furstenberg-Kesten Theorem \cite{LLE}*{Theorem 3.12} and the definition of equator, we have 
\begin{equation*}
        \lambda_1(\nu^E) =\lim_{n\rightarrow \infty} \frac{1}{n} \log\|g^E_{n-1}\ldots g^E_0\|=\lim_{n\rightarrow \infty} \frac{1}{n} \log\|g_{n-1}\ldots g_0 |_{E_\nu}\|< \lambda_1(\nu)
\end{equation*} for $\nu^\NN$-almost every $\fg=(g_k)_k\in G^\NN$. Thus, it follows that 
$\lambda_1(\nu^\perp)=\lambda_1(\nu)$. In other words, the dynamics on $E_\nu^\perp$ inherits the top Lyapunov exponent of the original system. Another important feature of the projected system on $E_\nu^\perp$ is the fact that the $\nu^\perp$-equator has dimension zero (see \cite{AEV2023}*{Section 3.2}). In other words, for all $\nu^\perp$-stationary measure $\eta^\perp$,
$$
       \alpha^\perp(\eta^\perp) = \int_{GL(E_\nu^\perp) \times \PP E_\nu^\perp} \log \frac{\|g^\perp u_\perp\|}{\|u_\perp\|} \, d(\nu^\perp \times \eta^\perp)=\lambda_1(\nu^\perp)=\lambda_1(\nu).
$$

\subsection{Examples of \texorpdfstring{$\nu$}{nu}-equators}

The following example shows that the $\nu$-equator may be trivial
even when the system is reducible.

\begin{example}
Let $d=3$ and consider the probability measure $\nu=p\delta_{A_1}+(1-p)\delta_{A_2}$, with $p\in(0,1)$, where
$$
A_1=\begin{pmatrix}\tau&1&1\\0&\gamma&0\\0&0&\gamma^{-1}\end{pmatrix},
\qquad
A_2=\begin{pmatrix}\hat\tau&0&0\\0&\cos\theta&\sin\theta\\0&-\sin\theta&\cos\theta\end{pmatrix},
$$
with $\tau>\gamma>1$, $\hat\tau>\gamma$ and $\theta\in(0,\pi/2)$. There exists a unique nonzero proper $\nu$-invariant subspace of this system, which is the line $\cL=\RR e_1$. So, in particular, the system is reducible. The Lyapunov exponent associated with $\cL$ is
$$
\lambda_{\cL}
=\lambda_1(\nu^{\cL})
=p\log\tau+(1-p)\log\hat\tau.
$$
On the quotient $\RR^3/\cL$, the maps induced by $A_1$ and $A_2$
are, respectively,
$$
\begin{pmatrix}
\gamma&0\\
0&\gamma^{-1}
\end{pmatrix}
\quad\text{and}\quad
\begin{pmatrix}
\cos\theta&\sin\theta\\
-\sin\theta&\cos\theta
\end{pmatrix}.
$$
Consequently,
$$\lambda_{\cL^\perp}\leq p\log\gamma
< p\log\tau+(1-p)\log\hat\tau
=\lambda_{\cL},$$ where $\lambda_{\cL^\perp}$ denotes the top Lyapunov exponent of
the induced cocycle on $\RR^3/\cL$. Hence $\lambda_1(\nu)=\lambda_{\cL}$ and this exponent is simple. Since $\cL$ carries the top Lyapunov exponent, it does not satisfy the condition in \eqref{definitionofequator}. As $\cL$ is the only nonzero proper $\nu$-invariant subspace, it follows that $E_\nu={0}$. Thus, this system is reducible but has a zero-dimensional $\nu$-equator.
\end{example}

The next example describes a system for which
$\lambda_1(\nu)$ is simple and $\dim E_\nu=1$.

\begin{example}
    Consider the example presented in \cite{AEV2023}*{Example 3.2}, for $d=3$: define $\nu=p \delta_{A_1}+ (1-p)\delta_{A_2}$, with $p\in(0,1)$, where the matrices $A_1$ and $A_2$ are given by
\begin{align}
    A_1 &= \begin{pmatrix}
        B&0\\0&1
    \end{pmatrix}\;\; \text{with}\;\; B=\begin{pmatrix}
        \gamma & 0 \\ 0&\gamma^{-1}    \end{pmatrix}\;\;\text{and}\;\; \gamma>1,\\[\jot] 
    A_2 &= \begin{pmatrix}
        R_\theta B & 0 \\ 0&1    \end{pmatrix}\;\;\text{with}\;\; R_\theta=\begin{pmatrix}
      \cos\theta&\sin\theta\\-\sin\theta & \cos\theta    \end{pmatrix}\;\;\text{and}\;\;\theta\ne0\;\;\text{small}.
\end{align} This system admits two $\nu$-invariant subspaces: $\cL_{12}=\RR^2\times \{0\}$, associated with the exponents $\lambda_1(\nu)\approx\log{\gamma}$ and $\lambda_3\approx -\log\gamma$, and $\cL_3=\{(0,0)\}\times \RR$, corresponding to the exponent $\lambda_2(\nu)=0$. The subspace $\cL_3$ is the $\nu$-equator in this example, and it satisfies $\dim \cL_3=1$. Note also that the multiplicity of $\lambda_1(\nu)$ is one.
\end{example}

%% file: body/3ProvaThmA.tex

Let $\nu\in\operatorname{Prob}_c(G)$ have a simple top Lyapunov exponent $\lambda_1(\nu)$. We begin by introducing some notation and establishing preliminary results that are independent of the dimension of the $\nu$-equator. We then divide the proof of Theorem~\ref{theorem.HolderCont} into two cases, according to whether $E_\nu$ is zero- or one-dimensional.

Fix a compact neighborhood $V=V(\nu)\subset G$ of the support of $\nu$ and a constant $M=M(\nu)\ge 1$ such that
\begin{equation} \label{eq.definitionVandM}
  \log(\|g\|\|g^{-1}\|) \le M, \quad \text{for every}\,\, g\in V.  
\end{equation} For every $g\in G$ and every $u,u'\in P$, we have
\begin{equation} \label{eq.propertydistancedistortion}
\ddp(gu,gu')\le \|g\|\|g^{-1}\|\; \ddp(u,u').
\end{equation} Consequently, 
\begin{equation}
    \label{eq.boundM}
    -\log \ddp(u,u') - M \le -\log \ddp(gu,gu')\le -\log \ddp(u,u') + M.
\end{equation}


For $n\ge1$, define $V^{(n)}=V^{(n)}(\nu)$ as the compact set of all products $g_{n-1}\ldots g_0$ with $g_i\in V$ for every $i\in\{0,\ldots,n-1\}$. We now choose a neighborhood $\cN_2\subset \operatorname{Prob}_c(G)$ of $\nu$ in the topology $\mathfrak{T}$ small enough to satisfy $\supp\nu'\subset V$ for every $\nu'\in \cN_2$. Consequently, for any measure $\nu'$ in this neighborhood and any $n\ge 1$, the support of its $n$-convolution is contained in the corresponding product set, \textit{i.e.}, $\supp\nu'^{\,(n)}\subset V^{(n)}$.

Since $\lambda_1(\nu)$ is simple, we can fix $\gamma_0=\gamma_0(\nu)>0$ such that
\begin{equation} \label{definitiongammazero}
    \lambda_1(\nu)-\lambda_2(\nu)>3\gamma_0.
\end{equation} This finishes the initial setup.

The following result, whose proof can be found in \cite{TV2020}*{Lemma 3.4}, gives a first estimate for the difference between the top Lyapunov exponent of $\nu$ and that of a nearby probability measure $\nu'$. It constitutes the first step towards establishing pointwise Hölder continuity, with maximal stationary measures, defined on \eqref{eq.lambadaoneintegralformula}, serving as the main ingredient in the proof.

\begin{proposition}
\label{prop.desmarkovprincipal}
    For any $\nu\in \operatorname{Prob}_c(G)$ there exists a neighborhood $\mathcal{N}_3=\mathcal{N}_3(\nu)$ of $\nu$ in $\operatorname{Prob}_c(G)$ and there is a constant $C_1=C_1(\nu)>0$ such that
    $$
        |\lambda_1(\nu)-\lambda_1(\nu')|\le C_1 \delta_\theta(\nu,\nu')+ C_1 \dtheta(\eta,\eta')
    $$
    for any $\nu'\in \mathcal{N}_3$, any maximal $\nu$-stationary measure $\eta$, any maximal $\nu'$-stationary measure $\eta'$ and any $\theta\in(0,1]$.  
\end{proposition}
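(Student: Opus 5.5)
The plan is to compare the integral formula \eqref{eq.lambadaoneintegralformula} for $\nu$ and $\nu'$ and split the difference into a term where only the measure on $G$ changes and a term where only the stationary measure changes. Set $\Phi(g,u)=\log(\|gu\|/\|u\|)$ on $G\times P$. Since $\eta$ is maximal for $\nu$ and $\eta'$ is maximal for $\nu'$,
\begin{align*}
\lambda_1(\nu)-\lambda_1(\nu') &= \int_{G\times P}\Phi\,d(\nu\times\eta)-\int_{G\times P}\Phi\,d(\nu'\times\eta')\\
&= \int_P\Big(\int_G \Phi(g,u)\,d(\nu-\nu')(g)\Big)d\eta(u) + \int_G\Big(\int_P \Phi(g,u)\,d(\eta-\eta')(u)\Big)d\nu'(g).
\end{align*}
I would take $\cN_3=\cN_2$, so that $\supp\nu'\subset V$ for every $\nu'\in\cN_3$. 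The goal is then to bound the first term by $C_1\delta_\theta(\nu,\nu')$ and the second by $C_1\dtheta(\eta,\eta')$, with $C_1$ depending only on $V$ (hence only on $\nu$) and uniform in $\theta\in(0,1]$.

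\textbf{Regularity of $\Phi$ on $V\times P$.} In $g$: for $u$ a unit vector and $g,h\in V$,
\[
|\log\|gu\|-\log\|hu\||\le \frac{\|(g-h)u\|}{\min(\|gu\|,\|hu\|)}\le \sup_{V}\|g^{-1}\|\,\delta(g,h).
\]
Since $\delta$ is bounded on the compact set $V$, say by $D$, we have $\delta(g,h)\le D^{1-\theta}\delta(g,h)^\theta$, so $\Phi(\cdot,u)$ is $\theta$-Hölder on $V$ with constant $K\max(1,D)$, uniformly in $u$ and $\theta$. In $u$: the map $u\mapsto \log\|gu\|$ on unit vectors has derivative bounded by $\|g\|\|g^{-1}\|\le e^M$. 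Using the representative sign for which $\|u-u'\|\lesssim \ddp(u,u')$, and the fact that $\ddp\le1$, we get
\[
|\Phi(g,u)-\Phi(g,u')|\le C\,\ddp(u,u')\le C\,\ddp(u,u')^\theta .
\]
In both cases the function is Lipschitz with respect to the metric raised to the power $\theta$.

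\textbf{Extending off $V$.} There is one technical point. $\delta_\theta$ is a supremum over functions on all of $G$, while $\Phi(\cdot,u)$ is controlled only on $V$. Both $\nu$ and $\nu'$ are supported in $V$, so I would replace $\Phi(\cdot,u)$ by a McShane extension from $V$ to $G$ with the same $\delta^\theta$-Lipschitz constant. A bounded truncation is not needed, since the Wasserstein supremum is invariant under adding constants. This gives
\[
\Big|\int_G\Phi(g,u)\,d(\nu-\nu')\Big|\le C\,\delta_\theta(\nu,\nu')
\]
for each $u$. Integrating against $\eta$ bounds the first term. For the second term, the bound $|\int_P\Phi(g,\cdot)\,d(\eta-\eta')|\le C\,\dtheta(\eta,\eta')$ holds for each $g\in V$, and integrating against $\nu'$ finishes the estimate.

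\textbf{Main obstacle.} I expect the only delicate step to be keeping the constants independent of $\theta$ and of $\nu'$. This comes from compactness of $V$, the bound $\ddp\le 1$, and the bounded diameter of $V$ in $\delta$. Maximality of $\eta$ and $\eta'$ is used only to make the integral formula valid for both measures.
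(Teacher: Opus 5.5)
Your proof is correct, and it is essentially the argument the paper relies on. The paper does not prove this proposition itself but cites Lemma 3.4 of \cite{TV2020}, where the estimate is obtained in the same way: write $\lambda_1(\nu)-\lambda_1(\nu')=\int\Phi\,d(\nu\times\eta)-\int\Phi\,d(\nu'\times\eta')$, split it into a term where only the law on $G$changes and a term where only the stationary measure changes, and use that $\Phi$ is uniformly Lipschitz in each variable on $V\times P$; since $\ddp\le 1$ and $V$ has bounded $\delta$-diameter, this gives $\theta$-H\"older bounds with constants independent of $\theta$. Your McShane-extension step correctly handles the fact that $\delta_\theta$ tests functions on all of $G$, while $\Phi(\cdot,u)$ is only controlled on $V$.
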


The statement below is standard (see, for instance, the proof of Corollary 3.9 in \cite{TV2020}). It is an intermediate step toward the estimate we need and will be used to establish Hölder regularity in both the zero-dimensional and one-dimensional cases.

\begin{proposition} \label{prop.essentialinequalitydistancespartone} Let $\nu\in \operatorname{Prob}_c(G)$. There exists a neighborhood $\mathcal{N}_4 = \mathcal{N}_4 (\nu) \subset \operatorname{Prob}_c(G)$ of $\nu$ and there are constants $C_2=C_2(\nu)>0$ and $C_3=C_3(\nu)>2$ such that
    $$
        \dtheta(\eta,\eta') \le C_2 C_3^{n} \delta_\theta(\nu,\nu') + \dtheta(\cP_{\nu}^{*n}\eta', \cP_{\nu}^{*n}\eta),
    $$ for every $\nu'$ in $\mathcal{N}_4$, for any $\eta \in \operatorname{Stat}(\nu)$, for every $\eta'\in \operatorname{Stat}(\nu')$, for any natural number $n\ge 1$ and every $\theta\in (0,1]$.
\end{proposition}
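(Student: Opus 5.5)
The proof uses stationarity to rewrite both measures as $n$-fold images and then splits the difference with the triangle inequality. Since $\eta=\cP_\nu^{*n}\eta$ and $\eta'=\cP_{\nu'}^{*n}\eta'$, we have
$$
\dtheta(\eta,\eta')\le \dtheta(\cP_{\nu'}^{*n}\eta',\cP_{\nu}^{*n}\eta')+\dtheta(\cP_{\nu}^{*n}\eta',\cP_{\nu}^{*n}\eta).
$$
The second term is exactly the one in the statement. It therefore suffices to show that
$$
\dtheta(\cP_{\nu'}^{*n}\xi,\cP_{\nu}^{*n}\xi)\le C_2C_3^n\delta_\theta(\nu,\nu')
$$
holds uniformly over all $\xi\in\operatorname{Prob}(P)$.

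This bound is obtained by telescoping in $n$. Writing $\cP_{\nu'}^{*n}-\cP_\nu^{*n}=\sum_{j=0}^{n-1}\cP_\nu^{*j}(\cP_{\nu'}^*-\cP_\nu^*)\cP_{\nu'}^{*(n-1-j)}$, we get
$$
\dtheta(\cP_{\nu'}^{*n}\xi,\cP_{\nu}^{*n}\xi)\le\sum_{j=0}^{n-1}\dtheta\bigl(\cP_\nu^{*j}\cP_{\nu'}^*\xi_j,\cP_\nu^{*j}\cP_\nu^*\xi_j\bigr),\qquad \xi_j=\cP_{\nu'}^{*(n-1-j)}\xi.
$$
Two ingredients then control each summand.

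\emph{(a) One-step estimate.} Take $\nu'\in\cN_4\subset\cN_2$, so that $\supp\nu'\subset V$. For a $\theta$-Hölder test function $\phi$ with $\|\phi\|_{\mathrm{Lip}}\le1$ with respect to $\ddp^\theta$, we have
$$
\int\phi\,d(\cP_{\nu'}^*\zeta-\cP_\nu^*\zeta)=\int_P\!\int_G\phi(gu)\,d(\nu'-\nu)(g)\,d\zeta(u).
$$
For fixed $u$, the map $g\mapsto\phi(gu)$ is Lipschitz with respect to $\delta^\theta$ on $V$, with constant depending only on $V$, because $\ddp(gu,g'u)\le C\|g-g'\|$ for $g,g'\in V$. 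Its oscillation is at most $1$ since $\ddp\le1$. One extends it off $V$, for instance by McShane extension and truncation, so that the extension is globally $\delta^\theta$-Lipschitz with controlled constant; this is harmless because $\nu$ and $\nu'$ live on $V$. This yields $\dtheta(\cP_{\nu'}^*\zeta,\cP_\nu^*\zeta)\le C\delta_\theta(\nu,\nu')$ uniformly in $\zeta$.

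\emph{(b) Lipschitz bound for $\cP_\nu^{*j}$.} By \eqref{eq.propertydistancedistortion} and \eqref{eq.definitionVandM}, $\ddp(gu,gu')^\theta\le e^{M\theta}\ddp(u,u')^\theta\le e^M\ddp(u,u')^\theta$ for $g\in V$. Hence for $g\in V^{(j)}$ the factor is $e^{jM}$. Pushing an optimal coupling forward by the diagonal operator (Lemma \ref{lemma:ndiagonalpushforward}) gives $\dtheta(\cP_\nu^{*j}\zeta,\cP_\nu^{*j}\zeta')\le e^{jM}\dtheta(\zeta,\zeta')$.

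Combining (a) and (b), the sum is at most $C\delta_\theta(\nu,\nu')\sum_{j<n}e^{jM}\le C_2C_3^n\delta_\theta(\nu,\nu')$, with $C_3=\max(e^M,3)>2$.

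The main technical point is (a): one must keep the constant independent of $\theta\in(0,1]$, and make sure the test function is admissible in the definition of $\delta_\theta$ on all of $G$ rather than only on $V$. The uniformity in $\theta$ follows because $\delta(g,g')^\theta\ge(\delta(g,g')/D)^\theta D^\theta$ with $D=\operatorname{diam}V$, and because $\min(1,x)\le x^\theta$ for the truncated distances.
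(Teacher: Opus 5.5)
Your proposal is correct and is essentially the standard argument the paper cites instead of proving (the proof of Corollary 3.9 in \cite{TV2020}): stationarity and the triangle inequality reduce the claim to bounding $\dtheta(\cP_{\nu'}^{*n}\eta',\cP_\nu^{*n}\eta')$, which you control by $C_2C_3^n\delta_\theta(\nu,\nu')$ through telescoping, a one-step perturbation bound, and the $e^{jM}$ Lipschitz bound for $\cP_\nu^{*j}$. Two small remarks: by stationarity your $\xi_j$ are all equal to $\eta'$, which simplifies the telescoping; and your McShane extension of $g\mapsto\phi(gu)$ off $V$ is a necessary step, not a formality, because this map is not uniformly $\delta^\theta$-Lipschitz on all of $G$ near singular matrices.
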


\subsection{Outline of the proof of Theorem \ref{theorem.HolderCont}}
Our goal is to prove pointwise Hölder continuity of
the top Lyapunov exponent at $\nu$. In other words, we aim to find a constant $\theta_0\in (0,1)$ such that for every $\theta\in (0,\theta_0)$ there exists a neighborhood $\cN_0$ of $\nu$ in $\operatorname{Prob}_c(G)$ and there exist constants $C_0, \beta_0>0$ such that
$$
|\lambda_1(\nu) - \lambda_1(\nu')| \le C_0 \delta_{\fT,\theta}(\nu,\nu')^{\beta_0} \quad \text{for every } \nu'\in \cN_0.
$$

The first step towards this goal is Proposition \ref{prop.desmarkovprincipal}, which reduces the problem to obtaining an appropriate bound for the distance $\dtheta(\eta,\eta')$ between maximal stationary measures. By Proposition \ref{prop.essentialinequalitydistancespartone}, this distance is bounded by the sum of a perturbation term and an iterative term involving the dual Markov operator, namely,
$$
\dtheta(\eta,\eta') \le C_2 C_3^{n} \delta_{\theta}(\nu,\nu') + \dtheta(\cP_{\nu}^{*n}\eta, \cP_{\nu}^{*n}\eta')
$$
for every $\nu'$ in a neighborhood $\mathcal{N}_4$ of $\nu$. The core of the argument now rests on estimating the contractive properties of the iteration $\dtheta(\cP_{\nu}^{*n}\eta, \cP_{\nu}^{*n}\eta')$. We divide the proof into two cases depending on the dimension of the equator.

In the zero-dimensional case, our strategy adapts a classical argument by Le Page \cite{LP89}*{Proposition 1}. We establish uniform exponential contraction of the
iterates of the dual Markov operator on $\Prob(P)$, as we can see in Proposition \ref{prop.distancebetweeniteratesofmarkovtrivialequator},
$$
\dtheta(\cP_{\nu}^{*n}\eta, \cP_{\nu}^{*n}\eta') \le C_4 \tau_1^{n\theta}\dtheta(\eta,\eta').
$$
After sufficiently many iterations, this contraction allows us to bound the distance between stationary measures in terms of the size of the perturbation. Proposition \ref{prop.desmarkovprincipal} then yields pointwise Hölder continuity of $\lambda_1$, completing the zero-dimensional case.

In the one-dimensional case, the equator is an obstruction to such uniform contraction. Our strategy is to treat separately points away from the equator and points near it. Large deviation estimates (addressed in Subsection \ref{section.largedeviations}) provide the required control away from the equator. Near the equator, we instead show that suitable maximal stationary measures assign little mass to small neighborhoods of it. The key ingredient for this mass estimate is Theorem~\ref{theorem.estimateadriyaya}, proved in Section~\ref{s:thmB} using tools developed by Avila, Eskin, and Viana \cite{AEV2023}. An approximation argument then extends the resulting estimate to all perturbations satisfying the required support condition.

Combining these two ingredients gives an exponential estimate for the iterates of suitably chosen stationary measures, provided the perturbation of the support is sufficiently small relative to the number of iterations. We then choose the number of iterations according to the size of the perturbation. This yields a Hölder estimate for the distance between maximal stationary measures and, through Proposition~\ref{prop.desmarkovprincipal}, completes the proof in the one-dimensional case.

\subsection{Zero-dimensional equator}

\subsubsection{Dynamical setting on the space of pairs}
\label{section.dynamicalsettingspaceofpairs}

Consider the product space $P \times P$ and its diagonal $\Delta = \{ (u,u) \mid u \in P \}$. Define the space of distinct pairs of lines as $\mathfrak{X} = (P \times P) \setminus \Delta$. We construct a compactification $\overline{\mathfrak{X}}$ of this space by ``blowing up'' the diagonal $\Delta$ into a boundary, the flag space $F(2,d)$, defined as
$$
    F(2,d) := \{ (F_1, F_2) : F_1 \subset F_2 \subset \mathbb{R}^d, \, \dim F_1=1 \,\,\text{and}\,\, \dim F_2=2 \},
$$
endowed with the topology inherited from the product space $\text{Gr}(1,d)\times \text{Gr}(2,d)$. The topology on the compactified space $\overline{\mathfrak{X}} = \mathfrak{X} \cup F(2,d)$ is defined by specifying the convergence to the boundary. A sequence $(u_n, v_n)_{n \in \mathbb{N}} \subset \mathfrak{X}$ converges to a point $(F_1, F_2) \in F(2,d)$ if and only if:
    \begin{enumerate}
        \item [(i)] $\lim\limits_{n\to\infty} \ddp(u_n, v_n) = 0$;
        \item [(ii)] $\lim\limits_{n\to\infty} (F_1^{(n)},F_2^{(n)} ) = (F_1,F_2)$, where $F_1^{(n)} = \text{span}\{u_n\}$ and $F_2^{(n)}=\text{span}\{u_n, v_n\}$.
    \end{enumerate}

Moreover, the space $\mathfrak{X}$ is an open subset of $\overline{\mathfrak{X}}$ and convergence for sequences staying within $\mathfrak{X}$ is the standard one.

In what follows, we denote a generic point in the set $\overline{\mathfrak{X}}$ by $\omega$. Next, we define a crucial function $\Upsilon:G\times \mathfrak{X} \to \mathbb{R}$ that measures the logarithmic distortion of distances on the projective space. This map is defined by
\begin{equation*}
    \Upsilon(g,u,u') =  \log \dfrac{\ddp(gu,gu')}{\ddp(u,u')}.
\end{equation*}
We use $g$ to denote both an element of $G$ and the action it induces on the projective space $P$ and, later, on the space $\overline{\mathfrak{X}}$. Note that $\Upsilon$ is continuous. Moreover, $\Upsilon$ satisfies (by \eqref{eq.propertydistancedistortion}): 
\begin{equation}\label{eq.bounded}
|\Upsilon(g,u,u')| \le \log\left(\|g\| \|g^{-1}\|\right)
\text{ for every } (u,u')\in \mathfrak{X}.
\end{equation}

We can extend $\Upsilon$ continuously to the boundary $F(2,d)$ in the following way. Consider a point $(F_1, F_2)\in F(2,d)$ and let $(u_n, v_n)_n \subset \mathfrak{X}$ be a sequence that converges to it. From item (i), it follows that the sequence collapses to a single line in $P$. That is, there exists $u \in P$ such that $\lim u_n = \lim v_n = u$. Let us also use $u$ to denote a unit representative of this line in $\mathbb{R}^d$. Then, $F_1 = \text{span}\{u\}$, and $F_2$ is a two-dimensional subspace containing $F_1$.

The set of lines in $P$ that are contained within the plane $F_2$ forms a great circle, denoted by $\PP F_2$, in the spherical model of $P$. Note that $u=F_1 \in \PP F_2$. The tangent space to this circle at $u$, denoted by $T_u \PP F_2$, is a one-dimensional subspace of the full tangent space $T_u P$. Therefore, there exists a unit vector $\overline{u}$ that spans $T_u \PP F_2$.

\begin{figure}[h]
    \centering    \includegraphics[width=0.35\linewidth]{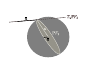}
    \caption{The point $u=F_1$ lies on the great circle $\PP F_2 \subset P$ and the vector $\overline{u}$ spans the tangent space $T_u \PP F_2$.}
\end{figure}

For any $g \in G$, we extend $\Upsilon$ to the boundary $F(2,d)$ by considering the derivative of the action 
$$
    \Upsilon(g, F_1, F_2) = \log \| D_u g \, \overline{u} \|.
$$
To see that the extension is well-defined, note that the limit point uniquely determines the line $u=F_1$ and the plane $F_2$. The sign ambiguity in choosing the spanning vector of $T_u \PP F_2$ is then eliminated by the norm, ensuring independence from the choice of the converging sequence.

Identifying the tangent space $T_u P$ with the orthogonal complement $u^\perp \subset \mathbb{R}^d$, the derivative $D_u g: T_u P \to T_{gu} P$ applied to a tangent vector $\dot{u} \in u^\perp$ is given by the formula
$$D_u g (\dot{u}) = \frac{\text{Proj}_{(g u)^\perp} \, g \dot{u}}{\|gu\|}\|u\|.$$
In summary, the extended map $\Upsilon:G\times \overline{\mathfrak{X}} \rightarrow \RR$ is given by
\begin{equation*}
    \Upsilon(g,\omega) = \begin{cases}
        \log\dfrac{\ddp(gu,gu')}{\ddp(u,u')}, \;&\text{if}\; \omega=(u,u')\in \mathfrak{X},\\[5mm]
        \log \dfrac{\|\text{Proj}_{(g u)^\perp} \, g \overline{u}\|}{\|gu\|},\;&\text{if}\; \omega=(F_1,F_2)\in F(2,d),
    \end{cases} 
\end{equation*} where $u$ and $\overline{u}$ are orthogonal unit vectors such that $F_1=\text{span}\{u\}$ and $F_2=\text{span}\{u,\overline{u}\}$.


The group $G$ acts continuously on $\overline{\mathfrak{X}}$ by 
\begin{equation}
    g\omega = \begin{cases} 
        (gu,gu'), & \text{for } \omega=(u,u')\in \mathfrak{X}, \\
        (gF_1,gF_2), & \text{for } \omega=(F_1,F_2)\in F(2,d).    
    \end{cases}
\end{equation}

This induces the following cocycle 
\begin{align*}
    \overline{\mathfrak{X}}\cF:G^\NN\times \overline{\mathfrak{X}} &\rightarrow G^\NN\times \overline{\mathfrak{X}}\\
    (\fg,\omega)&\mapsto (\sigma(\fg), A(\fg)\omega),
\end{align*} where $\fg=(g_k)_{k\ge0}\in G^\NN$ and the maps $\sigma$ and $A$ are exactly as in \eqref{eq.projectivecocycledefinition}. Then, for $n\ge 1$, we have $
    \overline{\mathfrak{X}}\cF^{\,n} (\fg,\omega) = ((g_{k+n})_{k\ge 0}\,,\, g_{n-1}\ldots g_0 \omega).
$

The associated Markov operator $\overline{\mathfrak{X}}\cP_{\nu}:\mathfrak{B}(\overline{\mathfrak{X}})\rightarrow \mathfrak{B}(\overline{\mathfrak{X}})$ and its dual $\overline{\mathfrak{X}}\cP_\nu^*$ act on functions and measures, respectively, by 
$$
    \overline{\mathfrak{X}}\cP_{\nu} (\varphi)(\omega) = \int_G \varphi(g\omega)\,d\nu(g) \quad \text{and} \quad \overline{\mathfrak{X}}\cP_\nu^* (\zeta) = \int_G g_* \zeta\; d\nu(g).
$$
A probability measure $\zeta$ is said to be $\overline{\mathfrak{X}}\cP_{\nu}$-invariant if it is a fixed point of the dual operator, i.e., $\zeta = \int_G g_* \zeta\; d\nu(g)$. For any probability measure $\zeta$ on $\overline{\mathfrak{X}}$, a fundamental equivalence holds (see \cite{LLE}*{Proposition 5.5}) :
 $$
    \zeta\; \text{is}\; \overline{\mathfrak{X}}\cP_{\nu}\text{-invariant if and only if } \nu^\NN\times\zeta\; \text{is invariant under } \overline{\mathfrak{X}}\cF.
 $$ Moreover, since $\overline{\mathfrak{X}}$ is a compact metric space, two properties follow: (i) There exists at least one probability measure which is $\overline{\mathfrak{X}}\cP_\nu$-invariant (see \cite{LLE}*{Proposition 5.6}). (ii) The dual operator $\overline{\mathfrak{X}}\cP_\nu^*$ is continuous relative to the $\text{weak}^*$ topology in the space of probability measures on $\overline{\mathfrak{X}}$ (see \cite{LLE}*{Lemma 5.8}).

A straightforward calculation shows that, on the open dense set $\mathfrak{X}$, the map $\Upsilon$ satisfies the relation $\Upsilon(gg',\omega) = \Upsilon(g, g'\omega) + \Upsilon(g', \omega)$. Since both the action of $G$ and the extended map $\Upsilon$ are continuous on the compactification $\overline{\mathfrak{X}}$, this identity extends to the boundary $F(2,d)$ by density. By an induction argument, for any $n \ge 1$ and $\omega \in \overline{\mathfrak{X}}$, we obtain 
\begin{equation}\label{eq.cocyclePsiomega}
    \Upsilon(g_{n-1}\ldots g_0,\omega) = \Upsilon(g_{n-1},g_{n-2}\ldots g_0\omega) +\ldots +\Upsilon(g_0, \omega).
\end{equation}
Now, define the projection $\pi: G^\NN \times \overline{\mathfrak{X}} \to G \times \overline{\mathfrak{X}}$ as $\pi(\fg, \omega) = (g_0, \omega)$. Then, the equality in \eqref{eq.cocyclePsiomega} can be compactly written as
\begin{equation}\label{eq.soma}
        \Upsilon(g_{n-1} \ldots g_0,\omega)
         = \sum_{j=0}^{n-1} (\Upsilon \circ \pi)\left(\,\overline{\mathfrak{X}}\cF^{\,j}(\fg,\omega)\right).
\end{equation}  This identity, which we will refer to as the cocycle property, allows us to analyze the evolution of $\Upsilon$ as a Birkhoff sum over the orbits of the cocycle.


\subsubsection{Estimates}
\label{section/3.4integralestimate}

The following proposition provides a key estimate that remains valid even when the $\nu$-equator $E_\nu \subset P$ is non-empty. To state it, consider the natural coordinate projections $\tilde{\pi}_1, \tilde{\pi}_2: \overline{\mathfrak{X}} \to P$ extended to the compactification. For a point $\omega = (u,u') \in \mathfrak{X}$, we set $\tilde{\pi}_1(\omega) = u$ and $\tilde{\pi}_2(\omega) = u'$. Since a boundary point $\omega = (F_1, F_2) \in F(2,d)$ represents the limit of a sequence where both coordinates converge to the same line, we define $\tilde{\pi}_1(\omega) = \tilde{\pi}_2(\omega) = F_1$.

\begin{proposition}\label{prop.integralestimate}
Let $\nu\in \operatorname{Prob}_c(G)$ with $\lambda_1(\nu)$ simple. Then,
$$
\int_{G \times \overline{\mathfrak{X}}} \Upsilon \, d(\nu \times \zeta) \leq \lambda_2(\nu) - \lambda_1(\nu)
$$ for any $\overline{\mathfrak{X}}\cP_\nu$-invariant probability measure $\zeta$ such that $(\tilde{\pi}_1)_*\zeta(E_\nu)=0$ and $(\tilde{\pi}_2)_*\zeta(E_\nu)=0$.
\end{proposition}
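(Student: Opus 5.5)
Let $\zeta$ be an $\overline{\mathfrak{X}}\cP_\nu$-invariant probability measure with $(\tilde\pi_1)_*\zeta(E_\nu)=(\tilde\pi_2)_*\zeta(E_\nu)=0$. Since $\nu^\NN\times\zeta$ is invariant under the cocycle $\overline{\mathfrak{X}}\cF$ and $\Upsilon\circ\pi$ is bounded by \eqref{eq.bounded} (on $\supp\nu$), the Birkhoff ergodic theorem together with the cocycle property \eqref{eq.soma} gives
$$
\int_{G\times\overline{\mathfrak{X}}}\Upsilon\,d(\nu\times\zeta)=\int_{G^\NN\times\overline{\mathfrak{X}}}\lim_{n\to\infty}\frac1n\Upsilon(g_{n-1}\cdots g_0,\omega)\,d(\nu^\NN\times\zeta)(\fg,\omega).
$$
It therefore suffices to show that, for $\zeta$-a.e.\ $\omega$ and $\nu^\NN$-a.e.\ $\fg$ (via Fubini, for each fixed $\omega$ outside a null set, a.e.\ $\fg$), the limit is at most $\lambda_2(\nu)-\lambda_1(\nu)$.

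The pointwise bound comes from the exterior-power identity
$$
\ddp(gu,gu')=\frac{\|gu\wedge gu'\|}{\|gu\|\,\|gu'\|}\quad\text{for unit } u,u',
$$
so that for $\omega=(u,u')\in\mathfrak{X}$,
$$
\Upsilon(A^n,\omega)=\log\|\wedge^2A^n(u\wedge u')\|-\log\|A^nu\|-\log\|A^nu'\|-\log\ddp(u,u').
$$
For a boundary point $(F_1,F_2)$ the analogous formula holds with $u\wedge u'$ replaced by $u\wedge\overline u$, since $\|\mathrm{Proj}_{(gu)^\perp}g\overline u\|\,\|gu\|=\|gu\wedge g\overline u\|$, and $u'$ replaced by $u$. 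Then, by Remark \ref{rem:secondexteriorpower},
$$
\limsup\frac1n\log\|\wedge^2A^n(\fg)(u\wedge u')\|\le\limsup\frac1n\log\|\wedge^2A^n(\fg)\|=\chi_1(\nu)+\chi_2(\nu)=\lambda_1(\nu)+\lambda_2(\nu)
$$
for a.e.\ $\fg$, using simplicity of $\lambda_1(\nu)$. Meanwhile $\tilde\pi_1(\omega),\tilde\pi_2(\omega)\notin E_\nu$ for $\zeta$-a.e.\ $\omega$, so by the Furstenberg--Kifer dichotomy \eqref{eq:dichotomy}, for each such $\omega$, $\frac1n\log\|A^nu\|\to\lambda_1(\nu)$ and $\frac1n\log\|A^nu'\|\to\lambda_1(\nu)$ for a.e.\ $\fg$. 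The constant term $\log\ddp(u,u')$ contributes nothing to the limit. Combining these, the $\limsup$ of $\frac1n\Upsilon(A^n(\fg),\omega)$ is at most $\lambda_1+\lambda_2-2\lambda_1=\lambda_2-\lambda_1$.

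The main obstacle is measurability and the order of quantifiers. The dichotomy holds for each fixed $u$ on a full-measure set of $\fg$ that depends on $u$, so I would apply Fubini to the measurable set $\{(\fg,\omega):\text{convergence holds}\}$: each $\omega$-section with $\tilde\pi_i(\omega)\notin E_\nu$ has full $\nu^\NN$-measure, hence the set has full $\nu^\NN\times\zeta$-measure. The Birkhoff limit exists a.e.\ and equals the $\limsup$, and the bound integrates to give the claim.
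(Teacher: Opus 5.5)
Your proof is correct, but it takes a different route from the paper's.

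\textbf{The paper's route.} It never passes to the exterior square in this proposition. For a.e.\ $\fg$ it uses simplicity to know that the Oseledets space $V^2_\fg$ is a hyperplane, and the dichotomy to get $u,u'\notin V^2_\fg$. It then writes a multiple of $u'$ as $u+w$ with $w\in\operatorname{span}\{u,u'\}\cap V^2_\fg$, and on the boundary writes $\overline u=\gamma u-w$. This gives $\ddp(A^n(\fg)u,A^n(\fg)u')\le\|A^n(\fg)w\|/\|A^n(\fg)u\|$. The growth rate of $w$ (at most $\lambda_2$) against that of $u$ (equal to $\lambda_1$) yields the bound. The interior points of $\mathfrak X$ and the boundary $F(2,d)$ are handled by two separate computations.

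\textbf{Your route.} You use $\ddp(gu,gu')=\|gu\wedge gu'\|/(\|gu\|\,\|gu'\|)$ and its boundary analogue $\Upsilon(g,(F_1,F_2))=\log\bigl(\|gu\wedge g\overline u\|/\|gu\|^2\bigr)$. You bound the numerator uniformly by the operator norm of $\wedge^2A^n(\fg)$, via Furstenberg--Kesten for $\tilde\nu$. The dichotomy is needed only for the two denominators.

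\textbf{What each approach buys.} Yours avoids the measurable Oseledets flag and the $\fg$-dependent vector $w$, and treats $\mathfrak X$ and $F(2,d)$ with one formula. It also gives the bound $\chi_2(\nu)-\chi_1(\nu)$ in general; simplicity enters only to identify $\chi_2=\lambda_2$. This is the same mechanism the paper itself uses later, in Part 2 of Proposition \ref{propIntegralEstimateDistance}. Your explicit Fubini argument, using continuity of $\tilde\pi_1,\tilde\pi_2$ on $\overline{\mathfrak X}$ to get a measurable good set, also fills in a step the paper states without justification. The paper's route, in exchange, gives a more geometric picture of why distances contract.
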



\begin{proof}

Let $\zeta$ be a probability measure on $\overline{\mathfrak{X}}$ satisfying the hypothesis, and let $\pi$ be as in \eqref{eq.soma}. From the relation $\nu\times\zeta=\pi_*(\nu^\NN\times\zeta)$, we get
    \begin{equation} \label{eq.Psiintegral1}
        \int_{G\times \overline{\mathfrak{X}}} \Psi \, d\nu \, d\zeta
         = \int_{G^\NN\times \overline{\mathfrak{X}}} (\Psi \circ \pi) \, d\nu^\NN \, d\zeta.
    \end{equation}
    
Since $\zeta$ is a $\overline{\mathfrak{X}}\cP_\nu^*$-invariant probability measure, the product probability $\nu^\NN\times\zeta$ is $\overline{\mathfrak{X}}\cF$-invariant. So, the Birkhoff ergodic theorem tells us that 
    \begin{equation}\label{eq.Psi}
    \begin{aligned}
      \int\limits_{G^\NN\times \overline{\mathfrak{X}}} (\Psi \circ \pi) \, d\nu^\NN \, d\zeta  & = \int\limits_{G^\NN\times \overline{\mathfrak{X}}}\lim_n \frac 1n \sum_{j=0}^{n-1} (\Psi \circ \pi)\circ \overline{\mathfrak{X}}\cF^j \,\,d\nu^\NN\,d \zeta
    \end{aligned}
    \end{equation} Thus, combining the cocycle property \eqref{eq.soma} with the equations \eqref{eq.Psiintegral1} and \eqref{eq.Psi}, we conclude that 
    \begin{equation} \label{eq.integraloflimitPsi}
        \int_{G\times \overline{\mathfrak{X}}} \Psi \, d\nu \, d\zeta = \int_{G^\NN\times \overline{\mathfrak{X}}} \lim_n \frac 1n \Psi(g_{n-1} \ldots g_0, \omega) \, d\nu^\NN(\fg) \, d\zeta(\omega).
    \end{equation} The idea is to prove that the limit inside the last integral is $\nu^\NN\times \zeta$-almost everywhere bounded from above by the expression $\lambda_2(\nu) - \lambda_1(\nu)$.

  We will split our analysis into two cases: points in $\mathfrak{X}$ and points in $F(2,d)$. First, let $\omega=(u,u')\in \mathfrak{X}$ and $\fg=(g_k)_{k\ge0} \in G^\NN$. Notice that
    \begin{equation*}
        \lim_n \frac 1n \Upsilon(g_{n-1} \ldots g_0, \omega) = \lim_n\frac 1n \Upsilon(A^n(\fg),u,u')
        = \lim_n\frac{1}{n}\log \ddp(A^n(\fg)u, A^n(\fg)u').
    \end{equation*}

Since $(\tilde{\pi}_1)_*\zeta(E_\nu)=0$ and $(\tilde{\pi}_2)_*\zeta(E_\nu)=0$, it follows that $u\notin E_\nu$ and $u'\notin E_\nu$ for $\zeta$-almost every point $(u,u')\in \mathfrak{X}$. Then, by \eqref{eq:dichotomy},
$$
    \lim_n\frac 1n \log \|g_{n-1}\ldots g_0 u\|=
    \lim_n\frac 1n \log \|g_{n-1}\ldots g_0 u'\|=\lambda_1(\nu)
$$
for $\nu^\NN\times\zeta$-almost every point $(\fg, u, u') \in G^\NN\times \mathfrak{X}$. Hence, $u\notin V^2_\fg$ and $u'\notin V^2_\fg$ for $\nu^\NN\times\zeta$-almost every point. 

Given that the first Lyapunov exponent $\lambda_1(\nu)$ is simple, the Oseledets subspace $V^2_\fg$ has codimension one in $\RR^d$ for $\nu^\NN$-almost every sequence $\fg$. Thus, it is a hyperplane. Let $\Sigma=\text{span}\{u,u'\}$ be the plane generated by $u$ and $u'$. Hence, the intersection $\Sigma\cap V^2_\fg$ must have dimension one, for $\nu^\NN\times \zeta$-almost every point $(\fg,u,u')\in G^\NN\times \mathfrak{X}$. Moreover, there is $w\in \Sigma\cap V^2_\fg\setminus\{0\}$ such that $v = u + w$ is parallel to $u'$, \textit{i.e.}, there exists $\gamma\in\RR$ with $v=\gamma u'$. 

For a nonzero vector $x\in\RR^d$, we write $[x]\in P$ for its projective class, consisting of all nonzero scalar multiples of $x$. Note that, for every $n\in\NN$,
$$
    [A^n(\fg)v] = [A^n(\fg)(\gamma u')] = [\gamma A^n(\fg)u'] = [A^n(\fg)u'].
$$ This implies that $\ddp(A^n(\fg)u,A^n(\fg)u')=\ddp(A^n(\fg)u,A^n(\fg)v)$ for all $n\in\NN$. Thus, we will estimate the distance between $A^n(\fg)u$ and $A^n(\fg)v$ instead of the original one.

Then, as a consequence of the Law of Sines, we get that for every $n\ge 1$:
    \begin{eqnarray*}
         \ddp(A^n(\fg)u,A^n(\fg)v) &=& \frac{\|A^n(\fg)u - A^n(\fg)v\|}{\|A^n(\fg)u\|}|\sin{\angle(A^n(\fg)v,A^n(\fg)w)}|
         \le \frac{\|A^n(\fg)w\|}{\|A^n(\fg)u\|}.
    \end{eqnarray*} Since $w\in V^2_\fg$ and $u \in \RR^d\setminus V^2_\fg$, $\nu^\NN\times\zeta$-almost everywhere in $G^\NN\times \mathfrak{X}$, then 
    \begin{eqnarray*}
        \lim_n\frac 1n\Upsilon(A^n(\fg),u,u') 
        &\le& \lim_n\frac{1}{n}\log\|A^n(\fg)w\| - \lim_n\frac{1}{n}\log\|A^n(\fg)u\|\\
        &\le& \lambda_2(\nu) - \lambda_1(\nu),\,\,\,\nu^\NN\times\zeta\text{-almost everywhere.}
    \end{eqnarray*}

Now, let $\omega = (F_1,F_2) \in F(2,d)$ and define $u$ and $\overline{u}$ as before, \textit{i.e.}, $u \in F_1$, $u$ and $\overline{u}$ are orthogonal unit vectors and $F_2 = \text{span}\{u,\overline{u}\}$. Then, for all $\fg=(g_k)_{k\ge0}\in G^\NN$,
\begin{align*}
    \lim_n\frac{1}{n} \Upsilon(g_{n-1}\ldots g_0, (F_1,F_2)) &= \lim_n \frac{1}{n} \log \|D_{u} g_{n-1}\ldots g_0 (\overline{u})\|\\ &= \lim_n \frac{1}{n} \log \frac{\|\operatorname{Proj}_{(g_{n-1}\ldots g_0 u)^{\perp}}g_{n-1}\ldots g_0 \overline{u}\|}{\|g_{n-1}\ldots g_0 u\|}
\end{align*}

We have that $(\tilde{\pi}_1)_*\zeta(E_\nu)=0$ and $(\tilde{\pi}_2)_*\zeta(E_\nu)=0$. This implies that $F_1 \not\subset E_\nu$ for $\zeta$-almost every $(F_1,F_2)\in F(2,d)$. Hence, $u\notin V^2_\fg$ for $(\nu^\NN\times \zeta)$-almost every $(\fg,(F_1,F_2))\in G^\NN\times F(2,d)$. Moreover, as mentioned earlier, whenever $V^2_\fg$ is well-defined, it is a hyperplane. In particular, this implies that $\dim(F_2 \cap V^2_\fg) = 1$.

Let $\fg\in G^\NN$ be such that there exists $V^2_\fg$ and $u\notin V^2_\fg$. There exists $\gamma\in \RR$ such that $w=\gamma u - \overline{u}$ is in $V^2_\fg$. Then, $\overline{u} = \gamma u - w$. Moreover, since $\operatorname{Proj}_{(g_{n-1}\ldots g_0 u)^{\perp}} \gamma g_{n-1}\ldots g_0 u = 0$, we have, for $\zeta$-almost every $(F_1,F_2)\in F(2,d)$,
\begin{align*}
    \lim_n\frac{1}{n} \Upsilon(g_{n-1}\ldots g_0, (F_1,F_2)) & = \lim_n \frac{1}{n} \log \frac{\left\|\operatorname{Proj}_{(g_{n-1}\ldots g_0 u)^{\perp}} g_{n-1}\ldots g_0 w\right\|}{\|g_{n-1}\ldots g_0 u\|}\\
    & \le \lim_n \frac{1}{n} \log \frac{\left\| g_{n-1}\ldots g_0 w\right\|}{\|g_{n-1}\ldots g_0 u\|}\\
    &\le \lim_n\frac{1}{n}\log\|A^n(\fg)w\| - \lim_n\frac{1}{n}\log\|A^n(\fg)u\|\\
    &\le \lambda_2(\nu) - \lambda_1(\nu),\,\,\,\nu^\NN\times\zeta\text{-almost everywhere.}
\end{align*} This follows from the fact that the projection has norm less or equal than one, $w\in V^2_\fg$ and $u \in \RR^d\setminus V^2_\fg$ for $\nu^\NN$-almost every $\fg
\in G^\NN$. In conclusion,
\begin{align}
     \int\limits_{G\times \overline{\mathfrak{X}}} \Upsilon \, d(\nu \times\zeta) &= \int\limits_{G^\NN\times \mathfrak{X}}\!\!\!\! \lim_n \frac 1n \Upsilon(g_{n-1} \ldots g_0, \omega) \, d(\nu^\NN\times\zeta) + \!\!\!\!\int\limits_{G^\NN\times F(2,d)} \!\!\!\! \lim_n \frac 1n \Upsilon(g_{n-1} \ldots g_0, \omega)\, d(\nu^\NN\times\zeta)\\[4pt]
     &\le \lambda_2(\nu) - \lambda_1(\nu).
\end{align} 

\end{proof}

From now on, assume that $E_\nu$ is zero-dimensional as a vector subspace (and, consequently, $E_\nu = \emptyset\subset P$). Under this assumption, the conditions $(\tilde{\pi}_1)_*\zeta(E_\nu)=0$ and $(\tilde{\pi}_2)_*\zeta(E_\nu)=0$ are trivially satisfied for any probability measure $\zeta$ on $\overline{\mathfrak{X}}$, meaning that Proposition \ref{prop.integralestimate} holds for every $\overline{\mathfrak{X}}\mathcal{P}_\nu$-invariant measure $\zeta$. With this setting established, our next goal is to prove the following exponential estimate.

\begin{proposition} \label{prop.exponentialestimatefortheintegral}
    Let $\nu\in \operatorname{Prob}_c(G)$ such that $\lambda_1(\nu)$ is simple and $\dim E_\nu=0$. There exists a constant $\theta_1=\theta_1(\nu)\in (0,1)$ such that, for every $\theta\in (0,\theta_1]$: 
    \begin{equation}        \lim_{n\rightarrow\infty}\left(\;\sup_{\omega\in \overline{\mathfrak{X}}}\,\int_{G}e^{\theta \Upsilon(g,\omega)}\,d\nu^{(n)}(g)\right)^{1/n} \le e^{-\gamma_0\theta}.
    \end{equation}
\end{proposition}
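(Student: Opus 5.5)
Set $a_n(\theta)=\sup_{\omega\in\overline{\mathfrak{X}}}\int_G e^{\theta\Upsilon(g,\omega)}\,d\nu^{(n)}(g)$. The argument adapts Le Page's, and first records that $(a_n)$ is submultiplicative. By the cocycle property \eqref{eq.cocyclePsiomega}, with $\nu^{(n+m)}$ the law of $g''g'$ where $g'\sim\nu^{(m)}$ and $g''\sim\nu^{(n)}$ are independent,
\[
\int e^{\theta\Upsilon(g,\omega)}\,d\nu^{(n+m)}(g)=\int\!\!\int e^{\theta\Upsilon(g'',g'\omega)}\,d\nu^{(n)}(g'')\,e^{\theta\Upsilon(g',\omega)}\,d\nu^{(m)}(g')\le a_n a_m .
\]
Hence $a_{n+m}\le a_na_m$, and Fekete's lemma shows that $\lim_n a_n^{1/n}$ exists and equals $\inf_n a_n^{1/n}$. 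It therefore suffices to find, for small $\theta$, a single $N$ with $a_N(\theta)\le e^{-\gamma_0\theta N}$.

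The key claim is that there is $N$ such that
\[
\sup_{\omega\in\overline{\mathfrak{X}}}\frac1N\int_G\Upsilon(g,\omega)\,d\nu^{(N)}(g)\le-\tfrac52\gamma_0 .
\]
I would prove this by contradiction via a compactness argument. Suppose that for every $n$ there is $\omega_n$ with $\frac1n\int\Upsilon(g,\omega_n)\,d\nu^{(n)}>-\frac52\gamma_0$. By the cocycle property, this quantity equals $\int_{G\times\overline{\mathfrak{X}}}\Upsilon\,d(\nu\times\zeta_n)$, where $\zeta_n=\frac1n\sum_{j<n}\overline{\mathfrak{X}}\mathcal{P}_\nu^{*j}\delta_{\omega_n}$. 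The space $\overline{\mathfrak{X}}$ is compact, so a subsequence of $(\zeta_n)$ converges weak$^*$ to some $\zeta$. The dual operator is weak$^*$ continuous and $\zeta_n$ is almost invariant, since $\overline{\mathfrak{X}}\mathcal{P}_\nu^*\zeta_n-\zeta_n=O(1/n)$ in total variation; hence $\zeta$ is $\overline{\mathfrak{X}}\mathcal{P}_\nu$-invariant. The function $\Upsilon$ is continuous on $G\times\overline{\mathfrak{X}}$ and bounded on $\supp\nu\times\overline{\mathfrak{X}}$ by \eqref{eq.bounded} and \eqref{eq.definitionVandM}, so the integrals pass to the limit and give $\int\Upsilon\,d(\nu\times\zeta)\ge-\frac52\gamma_0$.

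Because $\dim E_\nu=0$, Proposition \ref{prop.integralestimate} applies to every invariant $\zeta$. It gives $\int\Upsilon\,d(\nu\times\zeta)\le\lambda_2-\lambda_1<-3\gamma_0$ by \eqref{definitiongammazero}, which is a contradiction. The main obstacle is this step: the compactification $\overline{\mathfrak{X}}$ and the continuous extension of $\Upsilon$ to $F(2,d)$ are exactly what make the limit argument legitimate, and the uniformity in $\omega$ depends on them.

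With $N$ fixed, we expand the exponential. Since $|\Upsilon(g,\omega)|\le NM$ for $g\in\supp\nu^{(N)}$, using $e^x\le1+x+\frac{x^2}{2}e^{|x|}$ gives
\[
\int e^{\theta\Upsilon}\,d\nu^{(N)}\le1-\tfrac52\gamma_0N\theta+\tfrac12\theta^2N^2M^2e^{\theta NM}
\]
uniformly in $\omega$. Choose $\theta_1$ small enough that for $\theta\le\theta_1$ the quadratic term is at most $\frac12\gamma_0N\theta$. Then $a_N(\theta)\le1-2\gamma_0N\theta\le e^{-2\gamma_0N\theta}\le e^{-\gamma_0N\theta}$. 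Combined with Fekete's lemma, this gives $\lim_n a_n^{1/n}\le a_N^{1/N}\le e^{-\gamma_0\theta}$ for all $\theta\in(0,\theta_1]$.
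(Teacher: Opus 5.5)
Your proposal is correct and follows essentially the same route as the paper: submultiplicativity plus Fekete, a compactness argument on the averages $\zeta_n$ that produces an $\overline{\mathfrak{X}}\mathcal{P}_\nu$-invariant $\zeta$ to which Proposition \ref{prop.integralestimate} applies, and a second-order Taylor bound at one fixed time $N$. The differences are only cosmetic: you argue by contradiction with margin $-\tfrac52\gamma_0$, whereas the paper takes maximizers $\omega_n$ and extracts infinitely many $n$ with margin $-2\gamma_0$.
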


\begin{proof}
Let $\theta>0$ and $n\ge 1$. Define
$$
    \varrho_n(\theta) = \sup_{\omega \in \overline{\mathfrak{X}}} \int_G e^{\theta \Upsilon(g,\omega)}\,d\nu^{(n)}(g).
$$
The sequence $(\varrho_n(\theta))_{n\ge 1}$ is submultiplicative due to the cocycle property \eqref{eq.soma}. By Fekete's Lemma, $\lim_n \varrho_n(\theta)^{1/n} = \inf_n \varrho_n(\theta)^{1/n}$. Therefore, to establish the result, it is sufficient to find constants $\theta_1 > 0$ and $n_0\ge1$ such that
\begin{equation}\label{eq.suffices}
    \varrho_{n_0}(\theta)\le e^{-\gamma_0n_0\theta} \quad \text{for every } \theta\in (0,\theta_1].
\end{equation}

Let $M>0$ be as in \eqref{eq.definitionVandM}. This choice guarantees that $|\Upsilon(g,\omega)|\le Mn$ for every $g\in \supp\nu^{(n)}$, $\omega \in \overline{\mathfrak{X}}$ and $n\ge 1$. Recall that for every $\gamma\in \RR$, we have that $e^\gamma\le 1 + \gamma + \left(\frac{\gamma^2}{2}e^{|\gamma|}\right)$. Therefore, for all $n\ge 1$, 
\begin{equation}   \label{eq.boundoftherealexponential}
        \begin{aligned}
            \varrho_n(\theta)&\le 1 + \theta \sup_{\omega \in \overline{\mathfrak{X}}}\int_G \Upsilon(g,\omega)\,d\nu^{(n)}(g) + \frac{\theta^2}{2}\sup_{\omega \in \overline{\mathfrak{X}}}\int_G\Upsilon^2(g,\omega)e^{\theta|\Upsilon(g,\omega)|}\,d\nu^{(n)}(g)\\
            &\le 1 + \theta \sup_{\omega \in \overline{\mathfrak{X}}}\int_G \Upsilon(g,\omega)\,d\nu^{(n)}(g) + \frac{\theta^2}{2}(Mn)^2e^{Mn\theta}.
        \end{aligned}
    \end{equation}

To bound the linear term in $\theta$ we establish the following estimate.

\begin{claim} There exist infinitely many values of $n\ge 1$ such that
\begin{equation*}
    \sup_{\omega \in \overline{\mathfrak{X}}} \int_G \Upsilon(g,\omega)\,d\nu^{(n)}(g) \le -2\gamma_0 n.
\end{equation*}
\label{claim.infinitelymanyns}
\end{claim}

Indeed, by compactness and continuity, for each $n\ge1$, the supremum is attained at some $\omega_n \in \overline{\mathfrak{X}}$. Then, using \eqref{eq.soma},
        \begin{align*}
            \frac{1}{n}\sup_{\omega \in \overline{\mathfrak{X}}} \int_G \Upsilon(g,\omega)\,d\nu^{(n)}(g)&= \frac{1}{n} \int_G \Upsilon(g,\omega_n)\,d\nu^{(n)}(g)\\
            &= \int_{G^\NN} \frac{1}{n} \sum\limits_{j=0}^{n-1} \Upsilon(g_j,g_{j-1}\ldots g_0 \omega_n)\,d\nu^\NN(g_k)_{k\ge0}.
        \end{align*} The expression above may be rewritten as 
        \begin{equation*}
            \int_{G\times \overline{\mathfrak{X}}} \Upsilon(g,\omega)\,d\nu(g)\,d\zeta_n(\omega)\quad\text{with}\quad \zeta_n:=\frac{1}{n} \sum\limits_{j=0}^{n-1} \int_G g_*\delta_{\omega_n}\,d\nu^{(j)}(g),
        \end{equation*}where $\delta_{\omega_n}$ is the Dirac mass at $\omega_n$. Restricting to a subsequence, if necessary, we may suppose that $(\zeta_n)_n$ converges in the weak$^*$-topology to some probability measure $\zeta$ on $\overline{\mathfrak{X}}$. Thus, by continuity,
        \begin{equation} \label{eq.convergenceoftheintegral}
            \frac{1}{n}\sup_{\omega \in \overline{\mathfrak{X}}} \int_G \Upsilon(g,\omega)\,d\nu^{(n)}(g) \quad\text{ converges to}\quad \int_{G\times \overline{\mathfrak{X}}} \Upsilon\,d\nu\,d\zeta. 
        \end{equation} Note that $\zeta_n$ can be expressed as $ \zeta_n:=\frac{1}{n} \sum\limits_{j=0}^{n-1} \overline{\mathfrak{X}}\cP_\nu^{*j}(\delta_{\omega_n})$. Then, a direct computation shows that the dual operator satisfies
        $$
            \overline{\mathfrak{X}}\cP_\nu^{*}\zeta_n = \zeta_n + \frac{1}{n} \mathfrak{X}\cP_\nu^{*n}(\delta_{\omega_n}) - \frac{1}{n}\delta_{\omega_n}.
        $$ Moreover, $\overline{\mathfrak{X}}\cP_\nu^*$ is continuous relative to the weak$^*$-topology in $\operatorname{Prob}(\overline{\mathfrak{X}})$ (see \cite{LLE}*{Lemma 5.8}). So, taking the limit when $n$ goes to infinity, we get
        $$
            \overline{\mathfrak{X}}\cP_\nu^{*}\zeta = \zeta,
        $$ that is, the measure $\zeta$ is $\overline{\mathfrak{X}}\cP_\nu$-invariant. Since the $\nu$-equator has dimension zero, we have $(\tilde{\pi}_1)_*\zeta(E_\nu)=0$ and $(\tilde{\pi}_2)_*\zeta(E_\nu)=0$.

        Therefore, by Proposition \ref{prop.integralestimate}, the limit in \eqref{eq.convergenceoftheintegral} is at most $\lambda_2(\nu)-\lambda_1(\nu)$, which is strictly smaller than $-3\gamma_0$ (see \eqref{definitiongammazero}). Consequently, there must exist infinitely many values of $n$ for which the expression on the left-hand side of \eqref{eq.convergenceoftheintegral} is less than $-2\gamma_0$, proving the claim.

To conclude the proof of the proposition, fix an integer $n_0\ge 1$ satisfying the Claim \ref{claim.infinitelymanyns}. Substituting this choice into \eqref{eq.boundoftherealexponential} yields
$$
\varrho_{n_0}(\theta)\le 1 - 2\gamma_0 n_0 \theta + \frac{\theta^2}{2}(Mn_0)^2e^{Mn_0\theta}.
$$ We may chose $\theta_1=\theta_1(\nu)\in (0,1)$ small enough that the right-hand side is bounded by $e^{-\gamma_0n_0\theta}$ for all $\theta\in (0,\theta_1]$, completing the proof.
\end{proof}

We now analyze the contractive properties of the dual Markov operator $\cP_\nu^*$. Specifically, we will estimate the distance $\dtheta(\cP_\nu^{*n}\eta, \cP_\nu^{*n}\eta')$ between the iterates of two arbitrary probability measures $\eta,\eta'\in \operatorname{Prob}(P)$. This contraction is the key component required to establish the distance estimate between stationary measures in Proposition \ref{prop.essentialinequalitydistancespartone}.

In what follows, let $\theta_1=\theta_1(\nu)\in (0,1)$ be the constant given by Proposition \ref{prop.exponentialestimatefortheintegral}.

\begin{proposition}
\label{prop.distancebetweeniteratesofmarkovtrivialequator}
Let $\nu\in\operatorname{Prob}_c(G)$ be such that
$\lambda_1(\nu)$ is simple and $\dim E_\nu=0$.
For every $\theta\in(0,\theta_1]$, there exist constants
$C_4 = C_4(\nu,\theta)>1$ and
$\tau_1=\tau_1(\nu,\theta)\in(0,1)$ such that
$$    \dtheta(\cP_\nu^{*n}\eta,\cP_\nu^{*n}\eta')
    \leq C_4 \tau_1^{n\theta}\dtheta(\eta,\eta')
$$
for every $\eta,\eta'\in\operatorname{Prob}(P)$
and every integer $n\geq1$.
\end{proposition}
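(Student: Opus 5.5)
Using Proposition~\ref{prop.exponentialestimatefortheintegral}, we run Le Page's coupling argument on the dual side. Fix $\theta\in(0,\theta_1]$. The proposition provides some $n_1\ge1$ with
$$\varrho_{n_1}(\theta)=\sup_{\omega\in\overline{\mathfrak X}}\int_G e^{\theta\Upsilon(g,\omega)}\,d\nu^{(n_1)}(g)\le e^{-\gamma_0\theta n_1/2}.$$
Since $(\varrho_n(\theta))_n$ is submultiplicative, writing $n=kn_1+r$ with $0\le r<n_1$ gives $\varrho_n(\theta)\le \varrho_{n_1}(\theta)^k\varrho_1(\theta)^r$. Moreover $\varrho_1(\theta)\le e^{M\theta}$ by \eqref{eq.bounded} and \eqref{eq.definitionVandM}. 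Hence
$$\varrho_n(\theta)\le C_4\,\tau_1^{n\theta},\qquad \tau_1:=e^{-\gamma_0/2},\quad C_4:=e^{(M+\gamma_0/2)n_1\theta}.$$
For pairs $(u,u')\in\mathfrak X$, the definition of $\Upsilon$ converts this into the pointwise bound
$$\int_G \ddp(gu,gu')^\theta\,d\nu^{(n)}(g)\le C_4\tau_1^{n\theta}\,\ddp(u,u')^\theta .$$
On the diagonal the bound holds trivially, since both sides vanish. So the inequality is valid for every $(u,u')\in P\times P$.

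Next we pass to measures through the coupling formulation of $\dtheta$. Take any coupling $\xi$ of $\eta$ and $\eta'$. By Lemma~\ref{lemma:ndiagonalpushforward}, $\tilde\cP_\nu^{*n}\xi$ is a coupling of $\cP_\nu^{*n}\eta$ and $\cP_\nu^{*n}\eta'$. Then \eqref{eq:ncouplingexpression} and the pointwise bound give
$$\dtheta(\cP_\nu^{*n}\eta,\cP_\nu^{*n}\eta')\le\int_{P\times P}\int_G\ddp(gu,gu')^\theta\,d\nu^{(n)}(g)\,d\xi(u,u')\le C_4\tau_1^{n\theta}\int \ddp(u,u')^\theta\,d\xi .$$
Taking the infimum over all couplings $\xi$ yields the statement. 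One can also argue on the function side instead: if $\|\phi\|_{\mathrm{Lip}}\le1$, then $|\cP_\nu^n\phi(u)-\cP_\nu^n\phi(u')|\le\int\ddp(gu,gu')^\theta\,d\nu^{(n)}$. This shows $\|\cP_\nu^n\phi\|_{\mathrm{Lip}}\le C_4\tau_1^{n\theta}$, and duality gives the same conclusion. The second route avoids measurability questions about couplings.

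The analytic input is entirely Proposition~\ref{prop.exponentialestimatefortheintegral}, so no step here is a real obstacle. The only points that need care are bookkeeping. First, the proposition gives a limit, not a uniform bound, and the submultiplicativity step is what produces a uniform constant $C_4$. Second, $C_4$ may depend on $\theta$, as the statement allows, and we may enlarge it so that $C_4>1$. Third, the step from the bound on $\Upsilon$ on $\mathfrak X$ to all of $P\times P$ must handle the diagonal and the boundary $F(2,d)$, which is immediate as noted above.
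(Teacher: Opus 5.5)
Your proposal is correct and follows essentially the paper's proof: the same choice $\tau_1=e^{-\gamma_0/2}$, the same pointwise bound $\int_G\ddp(gu,gu')^\theta\,d\nu^{(n)}(g)\le\varrho_n(\theta)\,\ddp(u,u')^\theta$ obtained through $\Upsilon$, and the same duality step on Lipschitz functions (your coupling variant is an equivalent alternative). The only difference is that you make explicit, via submultiplicativity, the passage from the limit in Proposition~\ref{prop.exponentialestimatefortheintegral} to a uniform constant $C_4$, which the paper simply asserts.
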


\begin{proof}
Choose $0<\gamma<\gamma_0$, for instance
$\gamma=\gamma_0/2$, and set
$
    \tau_1=e^{-\gamma}\in(0,1).
$
For each $\theta\in(0,\theta_1]$, define
$$
    \varrho_n(\theta)
    =
    \sup_{\omega\in\overline{\mathfrak X}}
    \int_G e^{\theta\Upsilon(g,\omega)}\,d\nu^{(n)}(g).
$$
By Proposition~\ref{prop.exponentialestimatefortheintegral},
$$
    \lim_{n\to\infty}\varrho_n(\theta)^{1/n}
    \leq e^{-\gamma_0\theta}
    < e^{-\gamma\theta}
    = \tau_1^\theta.
$$
Consequently, for each fixed $\theta\in(0,\theta_1]$,
there exists a constant
$C_4=C_4(\nu,\theta)>1$ such that
\begin{equation} \label{eq.estimaterho}
    \varrho_n(\theta)
    \leq C_4\tau_1^{n\theta}
\end{equation} for every $n\geq1$. Recall that 
\begin{equation*}
    \dtheta (\cP_\nu^{*n}\eta,\cP_\nu^{*n}\eta') = \sup\left\{\int_{P}\varphi(u) d(\cP_\nu^{*n}\eta-\cP_\nu^{*n}\eta')(u): \varphi \in L_\theta(P)\right\},
\end{equation*} 
where $L_\theta(P)$ denotes the set of all $1$-Lipschitz functions on $P$ with respect to the metric $(\ddp)^{\theta}$. Let $\varphi\in L_\theta(P)$ and $n\geq1$.
For distinct points $u,u'\in P$, the definition
of the Markov operator gives
\begin{align*}
    |\cP_\nu^n\varphi(u)-\cP_\nu^n\varphi(u')|
    &\le
    \int_G|\varphi(gu)-\varphi(gu')|\,d\nu^{(n)}(g)
    \le
    \int_G\ddp(gu,gu')^\theta\,d\nu^{(n)}(g)\\
    &\le
    \ddp(u,u')^\theta
    \int_G e^{\theta\Upsilon(g,u,u')}\,d\nu^{(n)}(g)
    \le \varrho_n(\theta)\ddp(u,u')^\theta.
\end{align*} Thus, by \eqref{eq.estimaterho},
$$
 |\cP_\nu^n\varphi(u)-\cP_\nu^n\varphi(u')| \leq C_4\tau_1^{n\theta}\ddp(u,u')^\theta.
$$ The same estimate holds trivially when $u=u'$.
Therefore,
$
    \frac{\cP_\nu^n\varphi}
         { C_4 \tau_1^{n\theta}}
    \in L_\theta(P).
$ Consequently, we conclude that for any $\varphi \in L_\theta(P)$ and for every
$\eta,\eta'\in\operatorname{Prob}(P)$,
$$
    \left|
        \int_P\varphi\,
        d(\cP_\nu^{*n}\eta-\cP_\nu^{*n}\eta')
    \right| = \left|
        \int_P\cP_\nu^n\varphi\,d(\eta-\eta')
    \right|\leq C_4\tau_1^{n\theta}
    \dtheta(\eta,\eta').
$$
Taking the supremum over $\varphi\in L_\theta(P)$
completes the proof.

\end{proof}

When the $\nu$-equator has dimension zero, the condition $\eta(E_\nu)=0$ holds for every probability measure $\eta$ on $P$. If, in addition, the first Lyapunov exponent is simple, the uniqueness result of Aoun and Guivarc'h (see Remark \ref{remark.uniquenessofmaximalstationary}) ensures that there exists a unique $\nu$-stationary measure, denoted by $\eta_\nu$, which is necessarily maximal. Let $\mathcal{N}_4 = \mathcal{N}_4(\nu)\subset \Prob_c(G)$ be the neighborhood of $\nu$ defined in Proposition \ref{prop.essentialinequalitydistancespartone}.

\begin{proposition}
\label{prop.distanceofthestationarytrivialequator}
Let $\nu\in\operatorname{Prob}_c(G)$ be such that
$\lambda_1(\nu)$ is simple and $\dim E_\nu=0$. For every $\theta\in(0,\theta_1]$,
there exists a constant $C_5=C_5(\nu,\theta)>0$
satisfying
$$
    \dtheta(\eta_\nu,\eta')
    \leq C_5\delta_\theta(\nu,\nu')
$$
for every $\nu'\in\mathcal{N}_4$ and every $\eta'$ $\nu'$-stationary.
\end{proposition}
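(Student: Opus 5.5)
We combine Proposition \ref{prop.essentialinequalitydistancespartone} with the uniform contraction of Proposition \ref{prop.distancebetweeniteratesofmarkovtrivialequator}, then absorb the distance term into the left-hand side. Fix $\theta\in(0,\theta_1]$, $\nu'\in\mathcal{N}_4$ and $\eta'\in\operatorname{Stat}(\nu')$. Since $\dim E_\nu=0$ and $\lambda_1(\nu)$ is simple, $\eta_\nu$ is the unique $\nu$-stationary measure. In particular $\eta_\nu\in\operatorname{Stat}(\nu)$ and $\cP_\nu^{*n}\eta_\nu=\eta_\nu$ for all $n$.

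Apply Proposition \ref{prop.essentialinequalitydistancespartone} with $\eta=\eta_\nu$ and an integer $n\ge1$ to be chosen:
$$
\dtheta(\eta_\nu,\eta')\le C_2C_3^{n}\delta_\theta(\nu,\nu')+\dtheta(\cP_\nu^{*n}\eta',\cP_\nu^{*n}\eta_\nu).
$$
Apply Proposition \ref{prop.distancebetweeniteratesofmarkovtrivialequator} to the pair $\eta',\eta_\nu\in\operatorname{Prob}(P)$. This is legitimate because that contraction holds for arbitrary probability measures on $P$, not only stationary ones. It bounds the second term by $C_4\tau_1^{n\theta}\dtheta(\eta',\eta_\nu)$.

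Now choose $n_1=n_1(\nu,\theta)\ge1$ with $C_4\tau_1^{n_1\theta}\le\frac12$, which is possible since $\tau_1\in(0,1)$ and $\theta>0$. With this $n_1$, the term $\frac12\dtheta(\eta_\nu,\eta')$ appears on the right. Since $\dtheta\le 1$ is finite, we may move it to the left and obtain
$$
\dtheta(\eta_\nu,\eta')\le 2C_2C_3^{n_1}\delta_\theta(\nu,\nu').
$$
This gives the claim with $C_5=2C_2C_3^{n_1}$, which depends only on $\nu$ and $\theta$.

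There is no real obstacle. The only points to check are that Proposition \ref{prop.essentialinequalitydistancespartone} is applied with the order of the measures matching the symmetric $\dtheta$, and that $n_1$ does not depend on $\nu'$. Both hold because $C_4$ and $\tau_1$ depend only on $(\nu,\theta)$.
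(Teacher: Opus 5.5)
Your proposal is correct and follows the paper's proof: you combine Proposition \ref{prop.essentialinequalitydistancespartone} with the uniform contraction of Proposition \ref{prop.distancebetweeniteratesofmarkovtrivialequator}, choose $n$ with $C_4\tau_1^{n\theta}\le\frac12$, and absorb the distance term to get $C_5=2C_2C_3^{n}$. Your remark that $\dtheta\le 1$ is finite, which justifies the subtraction, is a detail the paper leaves implicit.
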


\begin{proof}
By Proposition \ref{prop.essentialinequalitydistancespartone}, there exist constants $C_2>0$ and $C_3>2$ such that
\begin{equation}\label{eq.absorption_step1}
    \dtheta(\eta_\nu,\eta') \le C_2 C_3^{n} \delta_\theta(\nu,\nu') + \dtheta(\cP_{\nu}^{*n}\eta_\nu, \cP_{\nu}^{*n}\eta')
\end{equation}
for every $\nu'$ in $\mathcal{N}_4$, every $\nu'$-stationary measure $\eta'$, all integer $n\ge 1$, and all $\theta\in (0,1]$. 

Fix
$\theta\in(0,\theta_1]$. By Proposition \ref{prop.distancebetweeniteratesofmarkovtrivialequator} there exist constants $C_4=C_4(\nu,\theta)>1$ and
$\tau_1=\tau_1(\nu)\in(0,1)$ such that
$$
    \dtheta (\cP_\nu^{*n}\eta_\nu,\cP_\nu^{*n}\eta') \le C_4\tau_1^{n\theta}\dtheta(\eta_\nu,\eta')
$$ for every integer $n\geq1$ and every
$\eta'\in\operatorname{Prob}(P)$. Combining this estimate with \eqref{eq.absorption_step1}, we obtain
$$
    \dtheta(\eta_\nu,\eta') \le C_2 C_3^{n} \delta_\theta(\nu,\nu') + C_4 \tau_1^{n\theta}\dtheta(\eta_\nu,\eta')
$$ for every $\nu'\in \mathcal{N}_4$, every $\nu'$-stationary
probability measure $\eta'$, and every integer $n\geq1$.

Since $\tau_1\in(0,1)$ and $\theta>0$, we may choose $\bar{n}=\bar{n}(\nu,\theta)\ge1$ as the smallest integer satisfying $C_4\tau_1^{\bar{n}\theta}\le 1/2$, and set $C_5 = C_5(\nu,\theta) = 2C_2 C_3^{\bar{n}}$. Applying the preceding estimate with $n=\bar n$ yields
\begin{align*}
    \dtheta(\eta_\nu, \eta') \le \frac{1}{2} \dtheta (\eta_\nu, \eta') + \frac{C_5}{2} \delta_\theta(\nu,\nu').
\end{align*}
Subtracting $\frac{1}{2}\dtheta(\eta_\nu, \eta')$ from both sides and multiplying by 2 completes the proof.
\end{proof}

\subsubsection{Completing the proof of the zero-dimensional case}
\label{section/3.5conclusion}

Set $\theta_0=\theta_1$ and fix
$\theta\in(0,\theta_0]$.
Recall that $\eta_\nu$ is the unique $\nu$-stationary
probability measure and it is maximal.

By Proposition~\ref{prop.desmarkovprincipal}, there exist
a neighborhood $\mathcal{N}_3$ of $\nu$ and a constant $C_1>0$
such that
$$
    |\lambda_1(\nu)-\lambda_1(\nu')|
    \leq
    C_1\delta_\theta(\nu,\nu')
    +C_1\dtheta(\eta_\nu,\eta')
$$
for every $\nu'\in \mathcal{N}_3$ and every maximal
$\nu'$-stationary measure $\eta'$.

By Proposition~\ref{prop.distanceofthestationarytrivialequator},
there exist a neighborhood $\mathcal{N}_4$ of $\nu$,
independent of $\theta$, and a constant
$C_5=C_5(\nu,\theta)>0$ such that
$$
    \dtheta(\eta_\nu,\eta')
    \leq C_5\delta_\theta(\nu,\nu')
$$
for every $\nu'\in\mathcal{N}_4$ and every
$\nu'$-stationary measure $\eta'$.

Set $\cN_0=\mathcal{N}_3\cap \mathcal{N}_4$.
For any $\nu'\in\cN_0$, choose a maximal
$\nu'$-stationary probability measure $\eta'$.
Combining the preceding estimates gives
$$
\begin{aligned}
    |\lambda_1(\nu)-\lambda_1(\nu')|
    &\leq
    C_1\delta_\theta(\nu,\nu')
    +C_1\dtheta(\eta_\nu,\eta')\\
    &\leq
    C_1(1+C_5)\delta_\theta(\nu,\nu')\\
    &\leq
    C_1(1+C_5)\delta_{\mathfrak T,\theta}(\nu,\nu').
\end{aligned}
$$
Thus, defining
$ C_0= C_0(\nu,\theta)=C_1(1+C_5)$, we obtain
$$
    |\lambda_1(\nu)-\lambda_1(\nu')|
    \leq
     C_0\delta_{\mathfrak T,\theta}(\nu,\nu')
$$ for every $\nu'\in\cN_0$. This proves the zero-dimensional case of
Theorem~\ref{theorem.HolderCont}, with $\beta_0=1$.

\subsection{One-dimensional equator}

\subsubsection{Large deviation estimates}
            \label{section.largedeviations}

The goal of this section is to establish quantitative estimates for the convergence of finite-time growth rates toward the corresponding Lyapunov exponents. These estimates will play an important role in the proof of Proposition \ref{propIntegralEstimateDistance} and are inspired by \cite{AEV2023}*{Theorem 4.1}.

Throughout this subsection, we assume that
$\nu\in\Prob_c(G)$ has a nontrivial equator:
$\dim E_\nu\geq1$.
The equator need not be one-dimensional, and we do not
assume that the top Lyapunov exponent is simple.

To motivate the first estimate, we consider the projected dynamics on $E_\nu^\perp$. Since the $\nu^\perp$-equator has dimension zero, it follows from \cite{FK83}*{Theorem 3.5} that for every $u_\perp\in\PP E_\nu^\perp$,
$$
\lim_{n\to\infty}\frac1n \log \frac{\|g_{n-1}^\perp\cdots g_0^\perp u_\perp\|}{\|u_\perp\|}
= \lambda_1(\nu),
$$
for $(\nu^\perp)^\NN$-almost every sequence $(g_k^\perp)_{k\ge0}$. Consequently, for every fixed $u_\perp\in\PP E_\nu^\perp$
and every $\gamma>0$, the
$(\nu^\perp)^\NN$-measure of the set of sequences satisfying
$$
    \frac1n\log
    \frac{\|g_{n-1}^\perp\cdots g_0^\perp u_\perp\|}
         {\|u_\perp\|}
    \in(\lambda_1(\nu)-\gamma,\lambda_1(\nu)+\gamma)
    \qquad\text{for every }n\geq n_1
$$
tends to one as $n_1\to\infty$. Since our arguments are carried out in the original sequence space $G^\NN$, we will consider the pullback of such sets under the projection map $\Lambda_\perp$ (defined in \eqref{eq:def_Lambda}). Proposition \ref{propEperpuniformmeasure} shows that the complement of the resulting set in $G^\NN$ has exponentially small measure.

To motivate the corresponding bound on the equator, a similar analysis is applied to the dynamics on $E_\nu$. By the definition of the $\nu$-equator,
$$
\lim_{n\to\infty}\frac1n \log \frac{\|g^E_{n-1}\cdots g^E_0u_E\|}{\|u_E\|}
\le \lambda_2(\nu)
$$
for every $u_E\in\PP E_\nu$ and $(\nu^E)^\NN$-almost every sequence $(g^E_k)_{k\ge0}$. Proposition \ref{propEuniformmeasure} provides the corresponding quantitative estimate in this setting.

\begin{remark}[One-dimensional systems]
\label{rem:one-dimensional-large-deviations}
Although the original system acts on $\mathbb R^d$
with $d\geq2$, the auxiliary systems considered below
may act on one-dimensional spaces.
The estimates of interest remain valid in this case, as follows.

Let $\nu\in\operatorname{Prob}_c(GL(1,\mathbb R))$. First, we identify $GL(1,\mathbb R)$ with
$\mathbb R\setminus\{0\}$. Let $a_j$ denote the $j$-th coordinate of the sequence $(a_j)_{j\ge0}\in GL(1,\mathbb R)^{\mathbb N}$.
Under the distribution $\nu^{\mathbb N}$, the random variables
$X_j=\log|a_j|$ are independent and identically
distributed. They are bounded because $\nu$ has
compact support in $GL(1,\mathbb R)$.
Since
$$
    \frac1n\log|a_{n-1}\cdots a_0|
    =\frac1n\sum_{j=0}^{n-1}X_j,
$$
the strong law of large numbers gives
$$
    \lambda_1(\nu)
    =\lim_{n\to\infty}\frac1n\log|a_{n-1}\cdots a_0|
    =\mathbb E[X_0]
    =\int\log|a|\,d\nu(a)
$$
for $\nu^{\mathbb N}$-almost every sequence. We may choose real numbers $L<L'$ such that
$$
    L\leq\log|a|\leq L'
$$ for every $a\in\operatorname{supp}\nu$. Now, fix $\gamma>0$.
Applying Hoeffding's inequality
\cite{Hoeffding1963}*{Theorem 2} to $X_j$ and to $-X_j$ and combining
the two one-sided bounds, we obtain
$$
    \nu^{\mathbb N}\left(
        \left\{
        (a_j)_{j\geq0}:
        \left|
        \frac1n\log|a_{n-1}\cdots a_0|-\lambda_1(\nu)
        \right|\geq\gamma
        \right\}
    \right)
    \leq
    2e^{-\frac{2\gamma^2}{(L'-L)^2}n}
$$
for every $n\geq1$. Set $c=2\gamma^2/(L'-L)^2>0$.
For each integer $N\geq1$, define the measurable set
$$
    \mathcal E_N
    =
    \left\{
        (a_j)_{j\geq0}:
        \left|
        \frac1n\log|a_{n-1}\cdots a_0|-\lambda_1(\nu)
        \right|<\gamma
        \text{ for every }n\geq N
    \right\}.
$$
Hence,
$$
    \nu^{\mathbb N}(\mathcal E_N^c)
    \leq
    \sum_{n=N}^{\infty}2e^{-cn}
    =
    \frac{2}{1-e^{-c}}e^{-cN}.
$$
Thus, outside a set of exponentially small measure,
both the upper and lower estimates hold for every $n\geq N$. 
\end{remark}


\begin{proposition}
    \label{propEperpuniformmeasure}
    Let $\nu \in \operatorname{Prob}_c(G)$. For all $\gamma>0$, there exist constants $B_1= B_1(\nu,\gamma)>0$ and $b_1 = b_1(\nu,\gamma)>0$ such that for every $u_{\perp}\in \PP E_\nu^{\perp}$ and every integer $ n_1\ge 1$, there exists a measurable set $\xi_1=\xi_1(\nu, \gamma, u_{\perp},n_1)\subset G^\NN$ satisfying:
    
    \begin{itemize}
        \item [(a)] $\nu^\NN(G^\NN \setminus \xi_1)\leq B_1e^{-b_1n_1}$ and

        \item [(b)] For all $(g_k)_{k\ge0}\in\xi_1$ and $n\geq n_1$ 
        $$\dfrac 1n \log\dfrac{\|g_{n-1}^\perp\ldots g_0^\perp u_{\perp}\|}{\|u_{\perp}\|}\in (\lambda_1(\nu)-\gamma, \lambda_1(\nu)+\gamma).$$
    \end{itemize}
\end{proposition}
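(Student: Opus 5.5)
The plan is to prove a uniform (in $u_\perp$) exponential large deviation estimate for the projected random product with law $\nu^\perp$ on $E_\nu^\perp$, and then pull it back to $G^\NN$ through $\Lambda^\perp$ using \eqref{eq:pushforwardmeasuressequences}. If $\dim E_\nu^\perp=1$, this is exactly Remark~\ref{rem:one-dimensional-large-deviations}: there $\PP E_\nu^\perp$ is a point, and $\lambda_1(\nu^\perp)=\lambda_1(\nu)$ by Subsection~\ref{subsectiondynamicsonperp}. So assume $\dim E_\nu^\perp\ge 2$. Write
$$
S_n(g,u_\perp)=\log\frac{\|g u_\perp\|}{\|u_\perp\|}, \qquad g\in GL(E_\nu^\perp),\ u_\perp\in\PP E_\nu^\perp .
$$
This is an additive cocycle over the projective action, exactly as $\Upsilon$ is in \eqref{eq.soma}, and it satisfies $|S_n|\le Mn$ on $\supp(\nu^\perp)^{(n)}$.

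\textbf{Step 1 (uniform convergence of means).} I will show that
$$
\sup_{u_\perp}\Bigl|\frac1n\int S_n(g,u_\perp)\,d(\nu^\perp)^{(n)}(g)-\lambda_1(\nu)\Bigr|\to 0 .
$$
I will copy the proof of Claim~\ref{claim.infinitelymanyns}, applied both to the supremum and to the infimum. Let $u_n$ be an extremizing point. The Cesàro averages $\zeta_n=\frac1n\sum_{j<n}\cP_{\nu^\perp}^{*j}\delta_{u_n}$ have weak$^*$ accumulation points, and each of these is $\nu^\perp$-stationary. By continuity, the normalized means converge along the subsequence to $\int\log\frac{\|g^\perp u_\perp\|}{\|u_\perp\|}\,d\nu^\perp\,d\zeta$. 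Since the $\nu^\perp$-equator is trivial, Subsection~\ref{subsectiondynamicsonperp} shows that this integral equals $\lambda_1(\nu)$ for \emph{every} $\nu^\perp$-stationary measure $\zeta$. Every subsequence thus has a further subsequence converging to $\lambda_1(\nu)$, which gives the claim for both the sup and the inf. This step is where the trivial equator is essential, and it is the step I regard as the main obstacle: it is the only place where uniformity in $u_\perp$ is won.

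\textbf{Step 2 (exponential moments).} Fix $n_0$ with
$$
\sup_{u_\perp}\int S_{n_0}\,d(\nu^\perp)^{(n_0)}\le n_0(\lambda_1+\gamma/2)
\quad\text{and}\quad
\inf_{u_\perp}\int S_{n_0}\,d(\nu^\perp)^{(n_0)}\ge n_0(\lambda_1-\gamma/2).
$$
Then I will use the bound $e^x\le 1+x+\frac{x^2}{2}e^{|x|}$ as in Proposition~\ref{prop.exponentialestimatefortheintegral}. For small $t>0$ it gives
$$
\sup_{u_\perp}\int e^{t(S_{n_0}-n_0(\lambda_1+\gamma))}\,d(\nu^\perp)^{(n_0)}\le 1-\tfrac{t n_0\gamma}{2}+O(t^2)=:\rho_+<1,
$$
and the symmetric statement holds for $e^{-t(S_{n_0}-n_0(\lambda_1-\gamma))}$ with some $\rho_-<1$. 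The sup over $u_\perp$ of these exponential moments is submultiplicative in $n$, by the cocycle property and the Markov property. Writing $n=kn_0+r$ and bounding the remainder block by $e^{t(M+|\lambda_1|+\gamma)n_0}$, I get
$$
\sup_{u_\perp}\int e^{\pm t(S_n-n(\lambda_1\pm\gamma))}\,d(\nu^\perp)^{(n)}\le C\rho^{\,n}
$$
for all $n$, with $\rho=\max(\rho_\pm)^{1/n_0}<1$.

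\textbf{Step 3 (conclusion).} By Chebyshev's inequality, for every $u_\perp$ and every $n$, the $(\nu^\perp)^\NN$-measure of the sequences with $\frac1n S_n\notin(\lambda_1-\gamma,\lambda_1+\gamma)$ is at most $2C\rho^n$. Take $\xi_1$ to be the $\Lambda^\perp$-preimage of the set of sequences that are good for all $n\ge n_1$; this is measurable, and it satisfies (b) by construction. By \eqref{eq:pushforwardmeasuressequences} and a union bound,
$$
\nu^\NN(G^\NN\setminus\xi_1)\le\sum_{n\ge n_1}2C\rho^n=\frac{2C}{1-\rho}\,\rho^{n_1}.
$$
This proves (a) with $B_1=2C/(1-\rho)$ and $b_1=-\log\rho$. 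These constants depend only on $\nu$ and $\gamma$, not on $u_\perp$ or $n_1$.
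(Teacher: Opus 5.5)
Your proof is correct, but it follows a different route from the paper's. The paper proves this proposition by citation. It notes that every $\nu^\perp$-stationary measure has $\alpha^\perp(\eta^\perp)=\lambda_1(\nu)$ (trivial $\nu^\perp$-equator). This verifies the hypothesis of the uniform large deviation theorem \cite{AEV2023}*{Theorem 4.1}, which it applies to $(GL(E_\nu^\perp),\nu^\perp)$ when $\dim E_\nu^\perp\ge 2$. When $\dim E_\nu^\perp=1$ it uses Hoeffding (Remark~\ref{rem:one-dimensional-large-deviations}). It then pulls back through $\Lambda^\perp$ exactly as in your Step~3.

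You instead reprove that large deviation estimate from scratch with the paper's own zero-dimensional-case tools:
\begin{itemize}
\item the Ces\`aro/stationary-limit argument of Claim~\ref{claim.infinitelymanyns}, run for both the supremum and the infimum, and you correctly place the use of the trivial equator in the infimum;
\item the Taylor bound of Proposition~\ref{prop.exponentialestimatefortheintegral};
\item submultiplicativity, Chebyshev and a union bound.
\end{itemize}

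The citation buys brevity. Your version buys self-containment and handles $\dim E_\nu^\perp=1$ with no separate argument. Its upper-tail half only needs $\limsup_n\sup_u\frac1n\int S_n\le\lambda_1$, which your Step~1 argument gives when combined with \eqref{eq.lambdaonemaximumintegral}, with no equator hypothesis. So the same scheme would also replace the citations behind Lemma~\ref{lemRuniformmeasure} and Proposition~\ref{propExtuniformmeasure}.

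One small fix. The constant $M$ of \eqref{eq.definitionVandM} bounds $\log(\|g\|\|g^{-1}\|)$. That controls the scale-invariant $\Upsilon$, but not $\log(\|gu\|/\|u\|)$; take $g=2\,\mathrm{Id}$, for which $\log(\|g\|\|g^{-1}\|)=0$. Use instead $\max_{g\in\supp\nu}\max\{|\log\|g\||,|\log\|g^{-1}\||\}$. This also bounds the diagonal blocks, since $\|g^\perp\|\le\|g\|$ and $(g^\perp)^{-1}=(g^{-1})^\perp$ has norm at most $\|g^{-1}\|$.
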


\begin{proof}

Since the $\nu^\perp$-equator has dimension zero, every
$\nu^\perp$-stationary measure $\eta^\perp$ satisfies
$$
    \alpha^\perp(\eta^\perp)
    =\lambda_1(\nu^\perp)
    =\lambda_1(\nu).
$$
If $\dim E_\nu^\perp\geq2$, we apply
\cite{AEV2023}*{Theorem 4.1} to
$(GL(E_\nu^\perp),\nu^\perp)$.
If $\dim E_\nu^\perp=1$, we instead apply
Remark~\ref{rem:one-dimensional-large-deviations}. In either case, for every $\gamma>0$ there exist constants
$B_1=B_1(\nu^\perp,\gamma)>0$ and
$b_1=b_1(\nu^\perp,\gamma)>0$ such that,
for every $u_\perp\in\PP E_\nu^\perp$ and every integer
$n_1\geq1$, there exists a measurable set
$
    \cE_1
    =\cE_1(\nu^\perp,\gamma,n_1,u_\perp)
    \subset GL(E_\nu^\perp)^\NN
$
satisfying:
\begin{itemize}
    \item[(a')]
    $(\nu^\perp)^\NN
    \bigl(GL(E_\nu^\perp)^\NN\setminus\cE_1\bigr)
    \leq B_1 e^{-b_1 n_1}$;

    \item[(b')]
    for every $(g_k^\perp)_{k\geq0}\in\cE_1$
    and every $n\geq n_1$,
    \begin{equation}\label{eq.convergenceprojectedperp}
        \frac1n\log
        \frac{\|g_{n-1}^\perp\cdots g_0^\perp u_\perp\|}
             {\|u_\perp\|}
        \in
        \bigl(\lambda_1(\nu)-\gamma,\lambda_1(\nu)+\gamma\bigr).
    \end{equation}
\end{itemize}

Notice that $\cE_1$ is a subset of $GL(E_\nu^\perp)^\NN$. Our goal is to find a subset of $G^\NN$ that satisfies properties (a) and (b). To achieve this, we recall the function  $\Lambda^\perp$, given in \eqref{eq:def_Lambda}, and define
$$
    \xi_1:=(\Lambda^\perp)^{-1}(\cE_1) \subset G^\NN.
$$ Since $(\Lambda^\perp)_*\nu^\NN=(\nu^\perp)^\NN$, we have
$$
    \nu^\NN(G^\NN \setminus \xi_1) = (\nu^\perp)^\NN( GL(E_\nu^\perp)^\NN \setminus \cE_1)\leq B_1 e^{-b_1 n_1}.
$$ If $(g_k)_{k\ge0}\in\xi_1$, then
$\Lambda^\perp((g_k)_{k\ge0})\in\cE_1$
and therefore the estimate \eqref{eq.convergenceprojectedperp} holds for every
$n\ge n_1$. 
   
\end{proof}


\begin{lemma}
\label{lemRuniformmeasure}
Let $m\geq1$ and let
$\mu\in\operatorname{Prob}_c(GL(m,\mathbb R))$.
For every $\gamma>0$, there exist constants
$B_2=B_2(\mu,\gamma)>0$ and $b_2=b_2(\mu,\gamma)>0$
such that, for every $u\in\mathbb P\mathbb R^m$
and every integer $n_2\geq1$, there exists a measurable
set
$\xi_2=\xi_2(\mu,\gamma,u,n_2)
\subset GL(m,\mathbb R)^{\mathbb N}$
satisfying:
\begin{enumerate}
    \item[(a)]
    $\mu^{\mathbb N}(GL(m,\RR)^\NN\setminus \xi_2)\leq B_2e^{-b_2 n_2}$;

    \item[(b)]
    for every $(g_j)_{j\geq0}\in\xi_2$
    and every $n\geq n_2$,
    $$
        \frac1n\log
        \frac{\|g_{n-1}\cdots g_0u\|}{\|u\|}
        <\lambda_1(\mu)+\gamma.
    $$
\end{enumerate}
\end{lemma}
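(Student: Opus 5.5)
We need only the upper deviation bound, uniformly in $u$. The plan is to reduce to the uniform upper large deviation for the norm of the product: since $\|g_{n-1}\cdots g_0u\|/\|u\|\le\|g_{n-1}\cdots g_0\|$ for every $u$, it suffices to produce a set $\xi_2$, in fact independent of $u$, on which $\frac1n\log\|g_{n-1}\cdots g_0\|<\lambda_1(\mu)+\gamma$ for all $n\ge n_2$.

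\textbf{Case $m=1$.} Here the statement follows directly from Remark~\ref{rem:one-dimensional-large-deviations}: take $\xi_2=\mathcal E_{n_2}$, with $B_2=2/(1-e^{-c})$ and $b_2=c$.

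\textbf{Case $m\ge2$, fixed-time bound.} The first step is a fixed-time exponential estimate
\[
\mu^{\mathbb N}\Bigl(\tfrac1n\log\|g_{n-1}\cdots g_0\|\ge\lambda_1(\mu)+\gamma\Bigr)\le B e^{-bn}.
\]
I would prove this by subadditivity, in the spirit of the proof of Proposition~\ref{prop.exponentialestimatefortheintegral}. By Furstenberg--Kesten and the compact support, $\frac1k\int\log\|g\|\,d\mu^{(k)}(g)\to\lambda_1(\mu)$. Fix $k$ with this average below $\lambda_1(\mu)+\gamma/4$, and let $M_k$ bound $|\log\|g\||$ on $\supp\mu^{(k)}$.

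For $t>0$, the Taylor bound $e^{x}\le1+x+\frac{x^2}{2}e^{|x|}$ gives
\[
\int\|g\|^t\,d\mu^{(k)}\le \exp\bigl(tk(\lambda_1+\gamma/2)\bigr)
\]
for $t$ small. Write $n=qk+r$ and use submultiplicativity of the norm. The blocks of length $k$ are independent, so
\[
\int\|g_{n-1}\cdots g_0\|^t\,d\mu^{\mathbb N}\le e^{tM}\bigl(\textstyle\int\|g\|^t d\mu^{(k)}\bigr)^q\le C\,e^{tn(\lambda_1+\gamma/2)}.
\]
Markov's inequality then yields the bound with $b=t\gamma/2$.

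\textbf{Passing to all $n\ge n_2$.} The second step is to set
\[
\xi_2=\Bigl\{\tfrac1n\log\|g_{n-1}\cdots g_0\|<\lambda_1(\mu)+\gamma\ \ \forall n\ge n_2\Bigr\}.
\]
Summing the geometric tail $\sum_{n\ge n_2}Be^{-bn}$ gives (a), with $B_2=B/(1-e^{-b})$ and $b_2=b$, and (b) holds by the norm inequality above.

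The main obstacle is the first step, namely choosing $k$ and $t$ so that the moment bound holds uniformly. The point requiring care is that the quadratic error term $t^2(kM_k)^2e^{tkM_k}$ is absorbed once $k$ is fixed first, before $t$ is chosen small. Alternatively, one could simply cite the upper large-deviation part of \cite{AEV2023}*{Theorem 4.1}, which gives the same fixed-time bound.
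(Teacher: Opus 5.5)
Your proof is correct, but it takes a different route from the paper's. For $m\ge2$ the paper works inside the framework of \cite{AEV2023}. It writes $\log(\|g_{n-1}\cdots g_0u\|/\|u\|)$ as a Birkhoff sum of $\varphi(g,v)=\log(\|v\|/\|g^{-1}v\|)$ over the operator $\cQ_\mu$ on $\supp\mu\times P$. It identifies $\Sigma(\mu,\varphi)=\lambda_1(\mu)$ using Lemma 4.7 of that paper together with the variational formula \eqref{eq.lambdaonemaximumintegral}. It then invokes Lemma 4.5 of \cite{AEV2023}, which yields a set $\xi_2$ depending on $u$, with constants uniform in $u$. For $m=1$ it falls back on Hoeffding via Remark~\ref{rem:one-dimensional-large-deviations}. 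You instead use the fact that only an upper deviation is needed, so the vector can be discarded via $\|A^n u\|\le\|A^n\|\,\|u\|$. You then prove the upper large deviation for the subadditive quantity $\log\|A^n\|$ directly by a Chernoff bound. You fix the block length $k$ first and then take $t$ small, which is the right order for absorbing the quadratic term. You use independence of blocks and submultiplicativity, and sum the geometric tail. All steps check out. The only bookkeeping to tidy is that your constant $C$ must absorb both the remainder factor $e^{tkM}$ and the factor $e^{tk|\lambda_1+\gamma/2|}$ from replacing $qk$ by $n$; both are harmless. Your route is self-contained and elementary, and it produces a set independent of $u$, which is stronger than required. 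It also works verbatim for $m=1$, so the case split and Hoeffding are unnecessary. The paper's route buys consistency with the AEV machinery it needs elsewhere. For example, Proposition~\ref{propEperpuniformmeasure} requires two-sided, vector-dependent bounds, and the norm trick cannot give the lower bound there. For this one-sided statement, however, that machinery is heavier than needed.
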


\begin{proof}
If $m=1$, the conclusion follows from
Remark~\ref{rem:one-dimensional-large-deviations}.
Assume henceforth that $m\geq2$ and set
$G=GL(m,\mathbb R)$ and $P=\mathbb P\mathbb R^m$.

We use the notation introduced in \cite{AEV2023}*{Section 4} for the system $(G,\mu)$, as follows. Let $S=\supp\mu$ and define $\varphi=\Phi\circ\iota^{-1}$, where $\iota(g,v)=(g,gv)$ and $\Phi$ is given by \eqref{def.Phi}. Then
$$
\varphi(g,v)
=
\log\frac{\|v\|}{\|g^{-1}v\|}.
$$

Consider the operator $\cQ_\mu$ defined on $C(S\times P)$ by
$$
        \cQ_\mu \psi(g',u) = \int_S \psi(g,gu)\,d\mu(g),
$$ which does not depend on $g'$. The dual operator $\cQ_\mu^*$ acts in the space of finite signed measures on the product $S\times P$ by: given $\psi\in C(S\times P)$,
$$
    \int_{S\times P} \psi\,d\cQ_\mu^*\zeta = \int_{S\times P} \cQ_\mu\psi\,d\zeta.
$$ A measure $\zeta$ is said to be $\cQ_\mu$-invariant if $\cQ_\mu^*(\zeta)=\zeta$. Moreover, define
$$
\Sigma(\mu,\varphi) = \sup\left\{\int_{S\times P} \varphi\,d\zeta\,:\,\zeta\; \text{is a } \cQ_\mu\text{-invariant probability measure}\right\}.
$$ Using the definition of $\varphi$, the result \cite{AEV2023}*{Lemma 4.7} and equation \eqref{eq.lambdaonemaximumintegral}, we obtain
\begin{align*}
\Sigma(\mu,\varphi)
&=
\sup
\left\{
\int_{S\times P}
\varphi\,d\zeta\,:\,\zeta\; \text{is a } \cQ_\mu\text{-invariant probability measure}
\right\} \\
&=
\sup
\left\{
\int_{S\times P}
\Phi\,d(\mu\times\eta):
\eta\ \text{is }\mu\text{-stationary}
\right\} \\
&=
\lambda_1(\mu).
\end{align*}

Applying \cite{AEV2023}*{Lemma 4.5} with
$\varepsilon=\gamma$, we obtain constants $B_2=B_2(\mu, \gamma)>0$ and $b_2=b_2(\mu, \gamma)>0$ such that for every $u\in P$ and every integer $n_2\ge1$ there exists a measurable set $\xi_2=\xi_2(\mu,\gamma,u,n_2)\subset G^\NN$ satisfying
\begin{enumerate}
    \item[(a')] $\mu^\NN(G^\NN\setminus\xi_2    ) \le B_2e^{-b_2n_2}$ and

    \item[(b')] for every $(g_k)_{k\ge0}\in\xi_2$ and every $n\ge n_2$,
    \begin{equation*}
        \frac1n
        \log
        \frac{\|g_{n-1}\cdots g_0u\|}
             {\|u\|} = \frac1n
        \sum_{j=0}^{n-1}
        \varphi(g_j,g_j\cdots g_0u) < \Sigma(\mu,\varphi)+\gamma = \lambda_1(\mu)+\gamma.
    \end{equation*} 
\end{enumerate}

This
concludes the proof.

\end{proof}


\begin{proposition}
    \label{propEuniformmeasure}
    Let $\nu\in \operatorname{Prob}_c(G)$ such that $\dim E_\nu\ge 1$. For all $\gamma>0$, there exist constants $B_3= B_3(\nu,\gamma)>0$ and $b_3= b_3(\nu,\gamma)>0$ such that for every $u_E\in \PP E_\nu$ and every integer $ n_3\ge 1$, there exists a measurable set $\xi_3=\xi_3(\nu, \gamma, u_E,n_3)\subset G^\NN$ satisfying:
    
    \begin{itemize}
        \item [(a)] $\nu^\NN(G^\NN \setminus \xi_3)\leq B_3e^{-b_3n_3}$ and

        \item [(b)] $\dfrac 1n \log\dfrac{\|g_{n-1}^E\ldots g_0^E u_E\|}{\|u_E\|} < \lambda_2(\nu)+\gamma$, for all $(g_k)_{k\ge0}\in\xi_3$ and $n\geq n_3$.
    \end{itemize}
\end{proposition}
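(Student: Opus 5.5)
The plan is to reduce this to Lemma~\ref{lemRuniformmeasure}, applied to the projected system $\mu=\nu^E\in\Prob_c(GL(E_\nu))$. The argument mirrors the proof of Proposition~\ref{propEperpuniformmeasure}, with $\Lambda^E$ in place of $\Lambda^\perp$. Identify $E_\nu$ with $\RR^m$, where $m=\dim E_\nu\ge1$, via an orthonormal basis. Since $\supp\nu$ is compact and $\pi_E$ is continuous, $\nu^E$ has compact support. The one point that needs checking is the bound
$$
\lambda_1(\nu^E)\le\lambda_2(\nu).
$$

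I would prove this bound first. By the definition of the equator (Definition~\ref{definition.equator}) and the Furstenberg--Kesten theorem,
$$
\lambda_1(\nu^E)=\lim_n\frac1n\log\|g^E_{n-1}\cdots g^E_0\|=\lim_n\frac1n\log\|g_{n-1}\cdots g_0|_{E_\nu}\|<\lambda_1(\nu)
$$
for $\nu^\NN$-almost every sequence. By Oseledets, for almost every $\fg$ the norm growth rate on the invariant subspace $E_\nu$ is the largest Lyapunov exponent attained by vectors of $E_\nu$. Every exponent of the cocycle other than $\lambda_1(\nu)$ is at most $\lambda_2(\nu)$. Hence $\lambda_1(\nu^E)\le\lambda_2(\nu)$.

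Equivalently, pick a unit vector $v\in E_\nu$ realizing the top growth rate of the restricted cocycle. Since $\lim\frac1n\log\|A^n(\fg)v\|<\lambda_1(\nu)$, we get $v\in V^2_\fg$, and therefore the rate is at most $\lambda_2(\nu)$. This also matches Remark~\ref{remark:equatorproperties}(2), which says $E_\nu\subset V^2_\fg$. In that form, $\|A^n(\fg)|_{E_\nu}\|\le\|A^n(\fg)|_{V^2_\fg}\|$, and the latter grows at rate $\lambda_2(\nu)$ by Oseledets. I expect this comparison to be the only step requiring care. The subtlety is that the Oseledets flag depends on $\fg$, and the deterministic subspace $E_\nu$ must lie inside $V^2_\fg$ almost surely. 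That inclusion is exactly Remark~\ref{remark:equatorproperties}(2), or the dichotomy \eqref{eq:dichotomy} applied to a basis of $E_\nu$.

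Next, fix $\gamma>0$ and apply Lemma~\ref{lemRuniformmeasure} to $\mu=\nu^E$ with this $\gamma$. This gives $B_3=B_2(\nu^E,\gamma)$ and $b_3=b_2(\nu^E,\gamma)$. For each $u_E\in\PP E_\nu$ and each $n_3\ge1$, it also gives a set $\cE_3\subset GL(E_\nu)^\NN$ with
$$
(\nu^E)^\NN(\cE_3^c)\le B_3e^{-b_3n_3},
$$
and such that every sequence in $\cE_3$ satisfies, for all $n\ge n_3$,
$$
\frac1n\log\frac{\|g^E_{n-1}\cdots g^E_0u_E\|}{\|u_E\|}<\lambda_1(\nu^E)+\gamma\le\lambda_2(\nu)+\gamma.
$$

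Finally, set $\xi_3=(\Lambda^E)^{-1}(\cE_3)$. By \eqref{eq:pushforwardmeasuressequences}, $(\Lambda^E)_*\nu^\NN=(\nu^E)^\NN$, so
$$
\nu^\NN(G^\NN\setminus\xi_3)=(\nu^E)^\NN(\cE_3^c)\le B_3e^{-b_3n_3},
$$
which is (a). Property (b) holds because $\Lambda^E(\fg)\in\cE_3$ for every $\fg\in\xi_3$. The constants depend only on $\nu^E$ and $\gamma$, hence only on $\nu$ and $\gamma$.
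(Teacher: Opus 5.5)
Your proposal is correct and follows essentially the same route as the paper: apply Lemma~\ref{lemRuniformmeasure} to $\nu^E$ on $GL(E_\nu)$, then pull the resulting set back via $\Lambda^E$ using $(\Lambda^E)_*\nu^\NN=(\nu^E)^\NN$. The only addition is your justification of $\lambda_1(\nu^E)\le\lambda_2(\nu)$ by applying the dichotomy \eqref{eq:dichotomy} to a basis of $E_\nu$, which gives $E_\nu\subset V^2_\fg$ almost surely; the paper uses this inequality without comment.
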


\begin{proof}

Consider the restricted system $(GL(E_\nu),\nu^E)$. Applying Lemma \ref{lemRuniformmeasure}, for every $\gamma>0$  there exist constants $B_3=B_3(\nu^E,\gamma)>0$ and $b_3=b_3(\nu^E,\gamma)>0$ such that for every $u_E\in \PP E_\nu$ and every integer $n_3\ge 1$ there exists a measurable set $\cE_3\subset GL(E_\nu)^\NN$ satisfying
    \begin{enumerate}
        \item [(a')] $(\nu^E)^\NN(GL(E_\nu)^\NN\setminus \cE_3)\le B_3e^{-b_3n_3}$ and
        \item [(b')] For every $(g^E_k)_k\in \cE_3$ and $n\ge n_3$, we have \begin{equation*}
            \frac{1}{n}\log\frac{\|g^E_{n-1}\ldots g^E_0 u_E\|}{\|u_E\|} <\lambda_1(\nu^E) + \gamma
            \le\lambda_2(\nu) + \gamma. 
        \end{equation*}
    \end{enumerate}

Define $\xi_3=(\Lambda^E)^{-1}(\cE_3)\subset G^\NN$, where the map $\Lambda^E$ is given by \eqref{eq:def_Lambda}. Since $(\Lambda^E)_*\nu^\NN=(\nu^E)^\NN$, we obtain
$$
\nu^\NN(G^\NN\setminus\xi_3)
=
(\nu^E)^\NN(GL(E_\nu)^\NN\setminus\cE_3)
\le
B_3e^{-b_3n_3}.
$$ Moreover, if $(g_k)_{k\ge0}\in\xi_3$, then $\Lambda^E((g_k)_{k\ge0})=(g_k^E)_{k\ge0}\in\cE_3$. Hence, for every $n\ge n_3$,
$$
\frac1n
\log
\frac{\|g_{n-1}^E\cdots g_0^E u_E\|}
{\|u_E\|}
<
\lambda_2(\nu)+\gamma.
$$ Finally, the proof is complete.

\end{proof}

\begin{proposition}\label{propExtuniformmeasure}
Let $\nu \in \operatorname{Prob}_c(G)$. For all $\gamma > 0$, there exist constants $B_4 = B_4(\nu,\gamma) > 0$ and $b_4 = b_4(\nu,\gamma) > 0$ such that for every $w \in \mathbb{P}\left(\bigwedge^2 \mathbb{R}^d\right)$ and every integer $n_4 \ge 1$, there exists a measurable set $\xi_4 = \xi_4(\nu, \gamma, w, n_4) \subset G^\mathbb{N}$ satisfying:
\begin{enumerate}
    \item [(a)] $\nu^\mathbb{N}(G^\mathbb{N} \setminus \xi_4) \le B_4 e^{-b_4 n_4}$, and
    \item [(b)] $\dfrac{1}{n} \log \dfrac{\|\wedge^2(g_{n-1}\cdots g_0) w\|}{\|w\|} < \chi_1(\nu) + \chi_2(\nu) + \gamma$ for all $(g_k)_{k \ge 0} \in \xi_4$ and every $n \ge n_4$.
\end{enumerate}
\end{proposition}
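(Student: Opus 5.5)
The plan is to reduce Proposition \ref{propExtuniformmeasure} to Lemma \ref{lemRuniformmeasure}, applied to the second exterior power system from Remark \ref{rem:secondexteriorpower}.

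First, set $m=\binom d2=\dim\bigwedge^2\RR^d$ and fix an orthonormal basis of $\bigwedge^2\RR^d$ (induced by the standard basis), so that $GL(\bigwedge^2\RR^d)$ is identified with $GL(m,\RR)$. Consider $\tilde\nu=(\pi_{\wedge 2})_*\nu$. It is compactly supported in $GL(m,\RR)$ because $\pi_{\wedge2}$ is continuous and $\wedge^2g$ is invertible with inverse $\wedge^2(g^{-1})$. By Remark \ref{rem:secondexteriorpower}, $\lambda_1(\tilde\nu)=\chi_1(\nu)+\chi_2(\nu)$.

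Next, apply Lemma \ref{lemRuniformmeasure} to $\mu=\tilde\nu$ with the given $\gamma$. This yields constants $B_4=B_2(\tilde\nu,\gamma)$ and $b_4=b_2(\tilde\nu,\gamma)$, depending only on $\nu$ and $\gamma$. For every $w\in\PP(\bigwedge^2\RR^d)$ and every $n_4\ge1$ it also yields a set $\cE_4\subset GL(m,\RR)^\NN$ with
\[
\tilde\nu^\NN(\cE_4^c)\le B_4e^{-b_4n_4},
\]
on which
\[
\frac1n\log\frac{\|\tilde g_{n-1}\cdots\tilde g_0w\|}{\|w\|}<\chi_1(\nu)+\chi_2(\nu)+\gamma\quad\text{for all } n\ge n_4.
\]

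Then pull this set back, exactly as in the proofs of Propositions \ref{propEperpuniformmeasure} and \ref{propEuniformmeasure}. Define $\Lambda^{\wedge}:G^\NN\to GL(m,\RR)^\NN$ by $(g_k)_k\mapsto(\wedge^2g_k)_k$ and set $\xi_4=(\Lambda^\wedge)^{-1}(\cE_4)$. Since $\nu^\NN$ is a product measure and $\Lambda^\wedge$ acts coordinatewise, $(\Lambda^\wedge)_*\nu^\NN=\tilde\nu^\NN$. This gives (a). For (b), use the functoriality $\wedge^2(g_{n-1}\cdots g_0)=(\wedge^2g_{n-1})\cdots(\wedge^2g_0)$, which holds because $\wedge^2$ is a homomorphism.

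I expect no real obstacle. The only point needing care is that Lemma \ref{lemRuniformmeasure} is stated for $GL(m,\RR)$ with the Euclidean norm. The norm on $\bigwedge^2\RR^d$ used in (b) must therefore be the one making the chosen basis orthonormal. Alternatively, one can note that any two norms differ by a bounded factor, and such a factor disappears after dividing by $n$ only up to an $O(1/n)$ error. That error can be absorbed by applying the lemma with $\gamma/2$ and enlarging the threshold, at the cost of adjusting $B_4$. The measurability of $\xi_4$ follows from the continuity of $\Lambda^\wedge$.
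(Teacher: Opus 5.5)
Your proposal is correct and follows essentially the same route as the paper. The paper also pushes $\nu$ forward to $\tilde\nu$ on $\operatorname{GL}(\bigwedge^2\mathbb{R}^d)$ and applies Lemma~\ref{lemRuniformmeasure} using $\lambda_1(\tilde\nu)=\chi_1(\nu)+\chi_2(\nu)$. It then pulls the resulting set back by the coordinatewise map, via $(\Lambda^{\wedge 2})_*\nu^{\mathbb{N}}=\tilde\nu^{\mathbb{N}}$ and $\wedge^2(g_{n-1}\cdots g_0)=(\wedge^2 g_{n-1})\cdots(\wedge^2 g_0)$. Your remark on the norm of $\bigwedge^2\mathbb{R}^d$ is a harmless extra precision. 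The paper implicitly uses the inner-product norm for which the $e_i\wedge e_j$ are orthonormal, which is also the norm in the wedge formula for $\operatorname{d}_P$ used later.
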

\begin{proof}
Consider the linear group $\tilde{G} \coloneqq \operatorname{GL}(\bigwedge^2 \mathbb{R}^d)$. Define the map $\pi_{\wedge 2} : G \to \tilde{G}$ by $$\pi_{\wedge 2}(g) = \wedge^2 g,$$ which gives the representation of an invertible matrix on the second exterior power. Consider the push-forward probability measure $\tilde{\nu} \coloneqq (\pi_{\wedge 2})_* \nu$. By Remark~\ref{rem:secondexteriorpower}, we conclude that 
$\tilde\nu\in\Prob_c(\tilde G)$ and
$$
    \lambda_1(\tilde\nu)=\chi_1(\nu)+\chi_2(\nu).
$$

Applying Lemma~\ref{lemRuniformmeasure} to $(\tilde{G},\tilde\nu)$,
we obtain, for every $\gamma > 0$, constants $B_4 > 0$ and $b_4 > 0$ such that, for every $w \in \mathbb{P}\left(\bigwedge^2 \mathbb{R}^d\right)$ and every integer $n_4 \ge 1$, there is a measurable set $\cE_4 \subset \tilde{G}^\mathbb{N}$ satisfying:
\begin{enumerate}
    \item [(i)] $\tilde{\nu}^\mathbb{N}(\tilde{G}^\mathbb{N} \setminus \cE_4) \le B_4 e^{-b_4 n_4}$.
    \item [(ii)] For every sequence $(\tilde{g}_k)_{k \ge 0} \in \cE_4$ and all $n \ge n_4$:
    \begin{equation*}
        \frac{1}{n} \log \frac{\|\tilde{g}_{n-1} \cdots \tilde{g}_0 w\|}{\|w\|} < \lambda_1(\tilde{\nu}) + \gamma = \chi_1(\nu) + \chi_2(\nu) + \gamma.
    \end{equation*}
\end{enumerate} The constants $B_4$ and $b_4$ depend only on $\nu$
and $\gamma$, since $\tilde\nu$ is determined by $\nu$. To conclude, let $\Lambda^{\wedge 2} : G^\mathbb{N} \to \tilde{G}^\mathbb{N}$ be the map defined by $\Lambda^{\wedge 2}((g_k)_{k\ge 0}) = (\pi_{\wedge 2}(g_k))_{k\ge 0}$, and define the set:
\begin{equation*}
    \xi_4 \coloneqq (\Lambda^{\wedge 2})^{-1}(\cE_4) \subset G^\mathbb{N}.
\end{equation*} Since $\Lambda^{\wedge 2}$ is continuous, the set
$\xi_4$ is measurable. Since $(\Lambda^{\wedge 2})_*\nu^\NN=\tilde\nu^\NN$, we have:
\begin{equation*}
    \nu^\mathbb{N}(G^\mathbb{N} \setminus \xi_4) = (\Lambda^{\wedge 2})_*\nu^\mathbb{N}(\tilde{G}^\mathbb{N} \setminus \cE_4) = \tilde{\nu}^\mathbb{N}(\tilde{G}^\mathbb{N} \setminus \cE_4) \le B_4 e^{-b_4 n_4}.
\end{equation*}
This proves {\rm (a)}. Moreover, since
$$
    (\wedge^2g_{n-1})\cdots(\wedge^2g_0)
    =\wedge^2(g_{n-1}\cdots g_0),
$$
property {\rm (ii)} implies that, for every
$(g_k)_{k\geq0}\in\xi_4$ and every $n\geq n_4$,
$$
    \frac1n\log
    \frac{\|\wedge^2(g_{n-1}\cdots g_0)w\|}{\|w\|}
    <\chi_1(\nu)+\chi_2(\nu)+\gamma.
$$
This proves {\rm (b)} and completes the proof.
\end{proof}

\subsubsection{Exponential estimates}
    \label{sectionhyperbolicestimate}

First, we establish an upper bound for the distance between the measures $\cP_\nu^{*n}\eta$ and $\cP_\nu^{*n}\eta'$, for any $\eta,\eta'\in \operatorname{Prob}(P)$. Then, we will apply this bound to demonstrate that this distance decays exponentially fast when considering a specific class of measures.

\begin{lemma}
\label{lemma:integralestimateofthedistanceiteratemarkovoperator}
    Let $\eta,\eta'\in \operatorname{Prob}(P)$. Then,
    $$
        \dtheta(\cP_\nu^{*n}\eta,\cP_\nu^{*n}\eta') \le \int_{G\times P\times P} \ddp (gu,gu')^\theta d\nu^{(n)}(g)d\eta(u)d\eta'(u')
    $$
    for every $n\in\NN, n\ge1$, and every $\theta\in (0,1]$.
\end{lemma}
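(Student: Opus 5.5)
The plan is to use the coupling characterization of the Wasserstein distance $\dtheta$ recalled in the preliminaries, namely
$$
\dtheta(\eta_1,\eta_2)=\inf\Big\{\int_{P\times P}\ddp(u,u')^\theta\,d\xi(u,u'):\ \xi \text{ a coupling of } \eta_1 \text{ and } \eta_2\Big\}.
$$
It is enough to produce one explicit coupling of $\cP_\nu^{*n}\eta$ and $\cP_\nu^{*n}\eta'$ whose cost equals the right-hand side of the statement.

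Start from the product measure $\tilde\eta=\eta\times\eta'$ on $P\times P$, which is trivially a coupling of $\eta$ and $\eta'$. By Lemma \ref{lemma:ndiagonalpushforward}, the measure $\tilde\cP_\nu^{*n}\tilde\eta$ is a coupling of $\cP_\nu^{*n}\eta$ and $\cP_\nu^{*n}\eta'$ for every $n\ge1$. One could also verify this directly: for a measurable $\cE\subset P$, the first marginal is $\int_G\int_P \mathds 1_\cE(gu)\,d\eta(u)\,d\nu^{(n)}(g)=\cP_{\nu^{(n)}}^*\eta(\cE)=\cP_\nu^{*n}\eta(\cE)$, and the second marginal is handled in the same way.

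Next, apply formula \eqref{eq:ncouplingexpression} to the bounded measurable function $\varphi(u,u')=\ddp(u,u')^\theta$, which is bounded by $1$. This gives
$$
\int_{P\times P}\ddp(v,v')^\theta\,d(\tilde\cP_\nu^{*n}\tilde\eta)(v,v')=\int_{G\times P\times P}\ddp(gu,gu')^\theta\,d\nu^{(n)}(g)\,d\eta(u)\,d\eta'(u').
$$
Since $\dtheta$ is the infimum of the cost over all couplings, it is bounded by this particular cost, which is exactly the claimed inequality. This works for every $n\ge1$ and every $\theta\in(0,1]$.

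There is no real obstacle here. The only point to check is that the coupling (Kantorovich) form of $\dtheta$ is legitimate for the cost $\ddp^\theta$. This holds because $\ddp^\theta$ is a metric on the compact space $P$, so Kantorovich–Rubinstein duality applies, as stated in the preliminaries. If one prefers to avoid duality, there is a direct route. For $\phi$ that is $1$-Lipschitz with respect to $\ddp^\theta$, write $\int\phi\,d(\cP_\nu^{*n}\eta-\cP_\nu^{*n}\eta')=\int(\phi(gu)-\phi(gu'))\,d\nu^{(n)}\,d\eta\,d\eta'$, bound the integrand by $\ddp(gu,gu')^\theta$, and take the supremum over $\phi$.
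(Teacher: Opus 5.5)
Your proposal is correct and is essentially the paper's proof. The paper likewise takes the product coupling $\eta\times\eta'$, uses Lemma \ref{lemma:ndiagonalpushforward} to see that $\tilde\cP_\nu^{*n}(\eta\times\eta')$ couples the two iterates, and evaluates its cost through \eqref{eq:ncouplingexpression} against the infimum (coupling) characterization of $\dtheta$; your duality-free alternative with $1$-Lipschitz test functions is also valid and avoids Kantorovich--Rubinstein entirely.
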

\begin{proof}
    Consider the product measure $\tilde{\eta}=\eta\times\eta'$ on $P\times P$. Let $\pi_1$ and $\pi_2$ be the coordinate maps defined in \eqref{eq.definitionofprojectionspi}. Since ${\pi_1}_* \tilde{\eta}=\eta$ and ${\pi_2}_* \tilde{\eta} =\eta'$, it follows that $\tilde{\eta}$ is a coupling of $\eta$ and $\eta'$. Then, by Lemma \ref{lemma:ndiagonalpushforward}, $\tilde{\cP_\nu^{*n}}\tilde{\eta}$ is a coupling of $\cP^{*n}_\nu\eta$ and $\cP^{*n}_\nu\eta'$, for every $n\ge1$.  

    By the definition of $\dtheta$, for every $n\ge 1$ and every $\theta\in(0,1]$ we have
    \begin{eqnarray*}
        \dtheta(\cP_\nu^{*n}\eta,\cP_\nu^{*n}\eta') &=& \inf\left\{\int_{P\times P}\ddp^\theta d\xi:\xi\;\, \textnormal{is a coupling of}\;\, \cP_\nu^{*n}\eta \;\,\textnormal{and}\;\, \cP_\nu^{*n}\eta'\right\}\\[\jot]
        &\le& \int_{P\times P}\ddp(u,u')^\theta d\tilde{\cP}_\nu^{*n}(\tilde{\eta})(u,u')\\[\jot]
        &=& \int_{G\times P\times P} \ddp(gu,gu')^\theta d\nu^{(n)}(g)d\eta(u)d\eta'(u'),
    \end{eqnarray*} where the last step follows from \eqref{eq:ncouplingexpression}.

\end{proof}

To obtain an exponential estimate for $\dtheta(\cP_\nu^{*n}\eta,\cP_\nu^{*n}\eta')$, our approach is to first control the integral term $\int_{G} \ddp(gu,gu')^\theta d\nu^{(n)}(g)$. Proposition \ref{propIntegralEstimateDistance} provides such an estimate for points $u$ and $u'$ away from the $\nu$-equator. This alone is not enough to deduce exponential decay
of this distance, since it does not control the contribution of pairs with at least one point near the equator. To handle these remaining pairs, we use Proposition \ref{prop:estimateforallnotonlygeneric},
which provides maximal stationary measures assigning little mass to neighborhoods of the equator, under a suitable condition on the support of the perturbation. Combining these two ingredients, we obtain the
exponential decay estimate in
Proposition~\ref{propEstimateDistanceBalancedMeasures}
for suitably chosen maximal stationary measures,
under the required support condition.

For $r>0$, let $E_\nu(r)$ denote the closed $r$-neighborhood of $E_\nu$ in $P$. Moreover, recall the constant $\gamma_0>0$ from
\eqref{definitiongammazero}.

\begin{proposition}\label{propIntegralEstimateDistance}
    Let $\nu \in \operatorname{Prob}_c(G)$ be such that $\lambda_1(\nu)$ is simple and $\dim E_\nu\geq1$. For every $\gamma \in (0, \gamma_0/3]$, there exist constants $C_6 = C_6(\nu, \gamma) > 0$, $\tau_2 = \tau_2(\nu, \gamma) \in (0,1)$, and $\theta_2=\theta_2(\nu,\gamma)\in(0,1]$ such that, for every $\theta \in (0, \theta_2]$ and every integer $n \ge 1$, the inequality
    \begin{equation}
        \int_G \ddp(gu,gu')^\theta d\nu^{(n)}(g) \le C_6 \tau_2^{n\theta}
    \end{equation}
    holds uniformly for all pairs $u, u' \in P$ satisfying $u \notin E_\nu(e^{-n\gamma})$ and $u' \notin E_\nu(e^{-n\gamma})$.
\end{proposition}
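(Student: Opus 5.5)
We bound $\ddp(gu,gu')$ by an exterior-power ratio, split $G^\NN$ into a good set and an exceptional set, and integrate separately. For unit vectors $u,u'$ and $g\in G$,
$$
\ddp(gu,gu')=\frac{\|gu\wedge gu'\|}{\|gu\|\,\|gu'\|}=\frac{\|\wedge^2 g\,(u\wedge u')\|}{\|gu\|\,\|gu'\|}\le\frac{\|\wedge^2 g\,(u\wedge u')\|}{\|u\wedge u'\|}\cdot\frac{1}{\|gu\|\,\|gu'\|},
$$
since $\|u\wedge u'\|=\ddp(u,u')\le1$. So it suffices to bound the numerator from above and $\|gu\|,\|gu'\|$ from below on a large set of sequences. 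Proposition~\ref{propExtuniformmeasure}, applied with $w=u\wedge u'$, $n_4=n$ and the given $\gamma$, provides a set $\xi_4$ with $\nu^\NN(G^\NN\setminus\xi_4)\le B_4e^{-b_4n}$. On $\xi_4$,
$$
\|\wedge^2A^n(\fg)w\|\le e^{n(\chi_1+\chi_2+\gamma)}\|w\|.
$$

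The lower bound on $\|A^n(\fg)u\|$ is where the hypothesis $u\notin E_\nu(e^{-n\gamma})$ enters. Write $u=u_E+u_\perp$. Then $\|u_\perp\|=\ddp(u,E_\nu)>e^{-n\gamma}$. By the block form \eqref{eq.representationEperpproduct}, the $E_\nu^\perp$-component of $A^n(\fg)u$ is $g^\perp_{n-1}\cdots g^\perp_0u_\perp$. Hence
$$
\|A^n(\fg)u\|\ge\|g^\perp_{n-1}\cdots g^\perp_0u_\perp\|.
$$
Proposition~\ref{propEperpuniformmeasure}, applied with the direction $u_\perp/\|u_\perp\|$ and $n_1=n$, gives a set $\xi_1(u)$ with exceptional measure at most $B_1e^{-b_1n}$. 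On $\xi_1(u)$ we get
$$
\|A^n(\fg)u\|\ge e^{n(\lambda_1-\gamma)}\|u_\perp\|\ge e^{n(\lambda_1-2\gamma)}.
$$
The same construction gives $\xi_1(u')$ for $u'$. On $\xi=\xi_4\cap\xi_1(u)\cap\xi_1(u')$, using $\chi_1=\lambda_1$ and $\chi_2=\lambda_2$ (simplicity), we obtain
$$
\ddp(A^n(\fg)u,A^n(\fg)u')\le e^{n(\lambda_2-\lambda_1+5\gamma)}\le e^{-n(3\gamma_0-5\gamma)}\le e^{-n\gamma_0\cdot 4/3},
$$
because $\gamma\le\gamma_0/3$. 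On the complement we use the trivial bound $\ddp\le1$, and its measure is at most $(B_4+2B_1)e^{-bn}$ with $b=\min(b_1,b_4)$. Since $\nu^{(n)}$ is the push-forward of $\nu^\NN$ under $\fg\mapsto A^n(\fg)$,
$$
\int_G\ddp(gu,gu')^\theta\,d\nu^{(n)}(g)\le e^{-\frac43\gamma_0 n\theta}+(B_4+2B_1)e^{-bn}.
$$
All constants depend only on $\nu$ and $\gamma$, and are uniform in $u,u'$.

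Converting the second term into the form $\tau_2^{n\theta}$ needs a cap on $\theta$. Set $\theta_2=\min\{1,\,b/\gamma_0\}$ and $\tau_2=e^{-\gamma_0}$. For $\theta\le\theta_2$ we have $e^{-bn}\le e^{-\gamma_0 n\theta}=\tau_2^{n\theta}$, and also $e^{-\frac43\gamma_0n\theta}\le\tau_2^{n\theta}$. Hence we may take $C_6=1+B_4+2B_1$.

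The main obstacle is the lower bound near the equator. The large-deviation set must be chosen for each fixed direction $u_\perp$ and each $n$, and we have to check that this is legitimate: Propositions~\ref{propEperpuniformmeasure} and \ref{propExtuniformmeasure} give constants uniform in the direction, so the integral bound holds pointwise in $(u,u')$ with uniform constants. The only other point to verify is that the $e^{-n\gamma}$ loss from $\|u_\perp\|$ is absorbed by the spectral gap $3\gamma_0$, and this is exactly what the restriction $\gamma\le\gamma_0/3$ ensures.
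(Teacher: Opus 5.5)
Your argument is correct, and it takes a genuinely more economical route than the paper's.

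The paper first splits $\ddp(gu,gu')^\theta$ by the triangle inequality into three integrals. In Parts 1 and 3 it compares $gu$ with $gu_\perp$, and $gu'$ with $gu'_\perp$, via the law of sines. That step needs the upper large-deviation bound on $E_\nu$ from Proposition~\ref{propEuniformmeasure}, hence the inequality $\lambda_1(\nu^E)\le\lambda_2(\nu)$, together with the consequence $\|u_E\|\le e^{n\gamma}\|u_\perp\|$ of $u\notin E_\nu(e^{-n\gamma})$. Only in Part 2 does it apply the $\wedge^2$ estimate, and only to the projected pair $u_\perp\wedge u'_\perp$.

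You instead apply Proposition~\ref{propExtuniformmeasure} to the full $2$-vector $u\wedge u'$. You control the denominators $\|A^n(\fg)u\|$ directly through the orthogonality bound $\|A^n(\fg)u\|\ge\|g^\perp_{n-1}\cdots g^\perp_0u_\perp\|$ and the identity $\|u_\perp\|=\ddp(u,E_\nu)>e^{-n\gamma}$ for unit $u$. This makes the large deviations on $E_\nu$ unnecessary and collapses the three integrals into a single good-set/bad-set split, using only Propositions~\ref{propEperpuniformmeasure} and~\ref{propExtuniformmeasure}.

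The price is that the equator loss $e^{-n\gamma}$ enters twice in the denominator. Your exponent is therefore $\lambda_2-\lambda_1+5\gamma$ rather than the paper's $\lambda_2-\lambda_1+3\gamma$. As you check, the restriction $\gamma\le\gamma_0/3$ still leaves a margin of $4\gamma_0/3$, and you even obtain a rate $\tau_2=e^{-\gamma_0}$ independent of $\gamma$.

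The paper's decomposition has the modest advantage of isolating geometrically where proximity to the equator matters. For this statement, though, your proof is shorter and equally rigorous. The degenerate case $u=u'$, where you would divide by $\|u\wedge u'\|=0$, is trivial because the integrand vanishes.
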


\begin{proof}

Let $\nu\in\Prob_c(G)$ be such that $\lambda_1(\nu)$
is simple and $\dim E_\nu\geq1$.

Fix $\gamma\in(0,\gamma_0/3]$.
Fix an integer $n\geq1$ and points
$u\notin E_\nu(e^{-n\gamma})$ and $u'\notin E_\nu(e^{-n\gamma})$. Choose nonzero vector representatives, still denoted
by $u,u'$, and write their orthogonal decompositions as $u=u_E+u_\perp$ and $u'=u'_E+u'_\perp$. Since both projective points lie outside the equator,
$u_\perp$ and $u'_\perp$ are nonzero.

 For $\theta\in (0,1]$, we get from the triangle inequality on the projective space that
    $$
        \ddp(gu,gu')^\theta\le \ddp(gu,gu_\perp)^\theta+\ddp(gu_\perp,gu'_\perp)^\theta+\ddp(gu',gu'_\perp)^\theta.
    $$

Consequently, integrating with respect to $\nu^{(n)}$, we obtain
\begin{align}
    \int_G \ddp(gu,gu')^\theta d\nu^{(n)}(g) &\le \int_G \ddp(gu,gu_\perp)^\theta d\nu^{(n)}(g)  +\int_G \ddp(gu_\perp,gu'_\perp)^\theta d\nu^{(n)}(g)\nonumber \\ \label{eq.triangularinequalityintegral}
    & \qquad +\int_G \ddp(gu'_\perp,gu')^\theta d\nu^{(n)}(g).
\end{align}

We will now estimate each of the integrals on the right-hand side of 
\eqref{eq.triangularinequalityintegral}.

\proofpart{Part 1 - First integral} We consider the case where $u \notin E_\nu^\perp$, as otherwise the distance $\ddp(gu, gu_\perp)$ vanishes identically. By the Law of Sines, Proposition \ref{propEperpuniformmeasure}, and Proposition \ref{propEuniformmeasure}, for a given $n_1=n_2=n \in \mathbb{N}$, and for all $(g_k)_{k\ge0} \in \xi_1(\nu, \gamma, u_\perp,n) \cap \xi_3(\nu, \gamma, u_E, n)$, the following holds
\begin{align*}
    \ddp(g_{n-1}\ldots g_0 u, g_{n-1}\ldots g_0 u_\perp) &\le \frac{\|g_{n-1}\ldots g_0 u-g_{n-1}\ldots g_0u_\perp\|}{\|g_{n-1}\ldots g_0 u_\perp\|} = \frac{\|g_{n-1}\ldots g_0 u_E\|}{\|g_{n-1}\ldots g_0 u_\perp\|}\\[3pt]
    &\le e^{n(\lambda_2 + \gamma)} e^{-n(\lambda_1 - \gamma)} \frac{\|u_E \|}{\| u_\perp \|}\le e^{n(\lambda_2 - \lambda_1 + 3\gamma)},
\end{align*}
this last inequality holds because $\|u_E\|\le e^{n\gamma}\|u_\perp\|$. This follows from the fact that $u\notin E_\nu(e^{-n\gamma})$. Indeed, 
$u\notin E_\nu(e^{-n\gamma})$ implies that $\ddp(u,v)\ge e^{-n\gamma}$, for all $v\in E_\nu$. In particular, take $v=u_E$. On the other hand, we have that $\ddp(u,u_E)\le \frac{\| u - u_E\|}{\| u_E\|}= \frac{\| u_\perp \|}{\| u_E\|}$. This yields the claim.

Since $5\gamma < 3 \gamma_0< \lambda_1(\nu) - \lambda_2(\nu)$, we have
\begin{equation} \label{eq:differencelambdasgamma}
    \lambda_2(\nu)-\lambda_1(\nu)+3\gamma
    < -2\gamma.
\end{equation}
Consequently, for all $(g_k)_{k\ge0} \in \xi_1 \cap \xi_3$,
    $$
        \ddp(g_{n-1}\ldots g_0 u, g_{n-1}\ldots g_0 u_\perp)^\theta \le e^{-2n\gamma\theta}.
    $$ Splitting the domain of integration into $\xi_1 \cap \xi_3$ and its complement, we obtain
    \begin{align*}
        \int_G \ddp(gu,gu_\perp)^\theta d\nu^{(n)}(g) &= \int_{G^\NN} \ddp(g_{n-1}\ldots g_0u,g_{n-1}\ldots g_0u_\perp)^\theta d\nu^\NN((g_k)_{k\ge0})\\
        &= \int_{\xi_1 \cap \xi_3} \ddp(g_{n-1}\ldots g_0u,g_{n-1}\ldots g_0u_\perp)^\theta d\nu^\NN((g_k)_{k\ge0})\\
         & \qquad + \int_{G^\NN\setminus (\xi_1 \cap \xi_3)} \ddp(g_{n-1}\ldots g_0u,g_{n-1}\ldots g_0 u_\perp)^\theta d\nu^\NN((g_k)_{k\ge0})\\
        &\le e^{-2n\gamma\theta} + B_{(1)}e^{-b_{(1)}n},
    \end{align*}
again by Propositions \ref{propEperpuniformmeasure} and \ref{propEuniformmeasure}, choosing $B_{(1)}:= 2\max\{B_1,B_2\}$ and $b_{(1)}:=\min\{b_1, b_2\}$. Notice that these constants depend only on $\nu$ and $\gamma$.

\proofpart{Part 2 - Second integral} We now estimate the middle term of \eqref{eq.triangularinequalityintegral}:
\begin{equation*}
    \int_G \ddp(gu_\perp,gu'_\perp)^\theta \, d\nu^{(n)}(g).
\end{equation*}
Recall that $u_\perp, u'_\perp \in E_\nu^\perp \setminus \{0\}$ are the orthogonal projections of $u$ and $u'$. We use them directly as representatives for their respective projective directions. Let $\wedge$ denote the wedge product. If $u_\perp \wedge u'_\perp = 0$, then the projective directions coincide, the integrand vanishes identically, and the estimate holds trivially. Thus, we may assume $u_\perp \wedge u'_\perp \neq 0$.

Let $g = g_{n-1}\cdots g_0$ with $g_i\in \supp\nu$ for $i\in \{0,\ldots,n-1\}$. Since $u_\perp, u'_\perp \in E_\nu^\perp$, their components in $E_\nu$ are zero. By the block-triangular representation of the product established in \eqref{eq.representationEperpproduct}, we have
\begin{align*}
    gu_\perp &= \begin{pmatrix} h(n)u_\perp \\ g^\perp_{n-1}\cdots g^\perp_0 u_\perp \end{pmatrix} 
    \\[2pt]
    gu'_\perp &= \begin{pmatrix} h(n)u'_\perp \\ g^\perp_{n-1}\cdots g^\perp_0 u'_\perp \end{pmatrix}
\end{align*}
where $h(n)$ is given by \eqref{eq.definitionofhn}. In particular, by the orthogonality, we have
\begin{equation}\label{eq.normlowerboundgx}
    \|gu_\perp\| \ge \|g^\perp_{n-1}\cdots g^\perp_0 u_\perp\| \qquad \text{and} \qquad \|gu'_\perp\| \ge \|g^\perp_{n-1}\cdots g^\perp_0 u'_\perp\|.
\end{equation}

The projective distance formula gives
 \begin{align}
    \ddp(gu_\perp,gu'_\perp) &= \frac{\|gu_\perp \wedge gu'_\perp\|}{\|gu_\perp\| \, \|gu'_\perp\|} = \frac{\|\wedge^2(g_{n-1}\cdots g_0)(u_\perp \wedge u'_\perp)\|}{\|g_{n-1}\cdots g_0 u_\perp\| \, \|g_{n-1}\cdots g_0 u'_\perp\|}\\[4pt]
    &\le \frac{\|\wedge^2(g_{n-1}\cdots g_0)(u_\perp \wedge u'_\perp)\|}{\|g^\perp_{n-1}\cdots g^\perp_0 u_\perp\|\, \|g^\perp_{n-1}\cdots g^\perp_0 u'_\perp\|}.  \label{eq.inequalitywithexteriorandperp}
\end{align}

On the other hand, let $\xi_{u_\perp} \coloneqq \xi_1(\nu, \gamma, u_\perp, n)$ and $\xi_{u'_\perp} \coloneqq \xi_1(\nu, \gamma, u'_\perp, n)$ be the sets given by Proposition \ref{propEperpuniformmeasure}. On the intersection $\xi_{u_\perp} \cap \xi_{u'_\perp}$, we have the lower bounds
\begin{equation*}
    \|g^\perp_{n-1}\cdots g^\perp_0 u_\perp\| \ge e^{n(\lambda_1(\nu)-\gamma)}\|u_\perp\| 
    \qquad \text{and} \qquad 
    \|g^\perp_{n-1}\cdots g^\perp_0 u'_\perp\| \ge e^{n(\lambda_1(\nu)-\gamma)}\|u'_\perp\|.
\end{equation*}

Furthermore, applying Proposition \ref{propExtuniformmeasure} to the second exterior power cocycle with the 2-vector $w = u_\perp \wedge u'_\perp$, we get a set $\xi_{u_\perp \wedge u'_\perp} \coloneqq \xi_4(\nu, \gamma, u_\perp \wedge u'_\perp, n)$,  such that for every sequence $(g_k)_{k\ge0} \in \xi_{u_\perp \wedge u'_\perp}$, we have
\begin{equation*}
    \|\wedge^2(g_{n-1}\cdots g_0)(u_\perp \wedge u'_\perp)\| \le e^{n(\lambda_1(\nu)+\lambda_2(\nu)+\gamma)} \|u_\perp \wedge u'_\perp\|.
\end{equation*} Notice that the equalities $\chi_1(\nu)=\lambda_1(\nu)$ and $\chi_2(\nu)=\lambda_2(\nu)$ follows from the simplicity of $\lambda_1(\nu)$.

Hence, for every sequence in the intersection $\xi_{(2)} \coloneqq \xi_{u_\perp} \cap \xi_{u'_\perp} \cap \xi_{u_\perp \wedge u'_\perp}$, combining \eqref{eq.inequalitywithexteriorandperp} with the estimates obtained above, we get
\begin{align*}
    \ddp(gu_\perp,gu'_\perp) &\le \frac{\|\wedge^2(g_{n-1}\cdots g_0)(u_\perp \wedge u'_\perp)\|}{\|g^\perp_{n-1}\cdots g^\perp_0 u_\perp\|\, \|g^\perp_{n-1}\cdots g^\perp_0 u'_\perp\|}\\[5pt]
    &\le \frac{e^{n(\lambda_1(\nu)+\lambda_2(\nu)+\gamma)} \|u_\perp \wedge u'_\perp\|}{e^{2n(\lambda_1(\nu)-\gamma)} \|u_\perp\| \, \|u'_\perp\|} \\[5pt]
    &= e^{n(\lambda_2(\nu)-\lambda_1(\nu)+3\gamma)} \ddp(u_\perp,u'_\perp) \\[5pt]
    &\le e^{n(\lambda_2(\nu)-\lambda_1(\nu)+3\gamma)}\\
    &\le e^{-2n\gamma}.
\end{align*} The last inequality comes from \eqref{eq:differencelambdasgamma}. By Propositions~\ref{propEperpuniformmeasure}
and~\ref{propExtuniformmeasure},
$$
\begin{aligned}
    \nu^\NN(G^\NN\setminus\xi_{(2)})
    &\leq
    \nu^\NN(G^\NN\setminus\xi_{u_\perp})
    +\nu^\NN(G^\NN\setminus\xi_{u'_\perp}) +
    \nu^\NN(G^\NN\setminus
                 \xi_{u_\perp\wedge u'_\perp})\\
    &\leq2B_1e^{-b_1n}+B_4e^{-b_4n}\\
    &\leq B_{(2)}e^{-b_{(2)}n},
\end{aligned}
$$
where $B_{(2)}=2B_1+B_4$ and $b_{(2)}=\min\{b_1,b_4\}$. To conclude, since the projective distance $\ddp$ is uniformly bounded by $1$, splitting the domain of integration yields
\begin{align*}
    \int_G \ddp(gu_\perp,gu'_\perp)^\theta \, d\nu^{(n)}(g) &= \int_{G^\mathbb{N}} \ddp(g_{n-1}\cdots g_0u_\perp, g_{n-1}\cdots g_0u'_\perp)^\theta \, d\nu^\mathbb{N}((g_k)_{k\ge0}) \\[5pt]
    &\le e^{-2n\gamma\theta} + \nu^\mathbb{N}(G^\mathbb{N}\setminus \xi_{(2)}) \\[5pt]
    &\le e^{-2n\gamma\theta} + B_{(2)}e^{-b_{(2)}n}.
\end{align*}

\proofpart{Part 3 - Third integral} The analysis of the third integral follows by symmetry from the estimates established in Part 1. Applying the estimate from Part 1 to $u'$ and using
the symmetry of the projective distance, we obtain
$$
    \int_G\ddp(gu'_\perp,gu')^\theta\,d\nu^{(n)}(g)
    \leq e^{-2n\gamma\theta}
         +B_{(1)}e^{-b_{(1)}n}.
$$

\proofpart{Part 4 - Final estimate}
Combining the estimates from Parts 1, 2 and 3 with
\eqref{eq.triangularinequalityintegral}, we obtain
$$
    \int_G\ddp(gu,gu')^\theta\,d\nu^{(n)}(g)
    \leq
    3e^{-2n\gamma\theta}
    +2B_{(1)}e^{-b_{(1)}n}
    +B_{(2)}e^{-b_{(2)}n}.
$$
Set $B_0=2B_{(1)}+B_{(2)}$ and     $b_0=\min\{b_{(1)},b_{(2)}\}$. Then,
$$
    \int_G\ddp(gu,gu')^\theta\,d\nu^{(n)}(g)
    \leq3e^{-2n\gamma\theta}+B_0e^{-b_0n}.
$$

Choose
$$
    \theta_2=
    \min\left\{1,\frac{b_0}{2\gamma}\right\}
    \in(0,1].
$$
For every $\theta\in(0,\theta_2]$, we have
$2\gamma\theta\leq b_0$, and hence $e^{-b_0n}\leq e^{-2n\gamma\theta}$. Consequently,
$$
    \int_G\ddp(gu,gu')^\theta\,d\nu^{(n)}(g)
    \leq(3+B_0)e^{-2n\gamma\theta}.
$$
Taking $C_6=3+B_0$ and $\tau_2=e^{-2\gamma}\in(0,1)$, we obtain
$$
    \int_G\ddp(gu,gu')^\theta\,d\nu^{(n)}(g)
    \leq C_6\tau_2^{n\theta}.
$$
The constants $C_6$ and $\tau_2$ depend only on $\nu$ and $\gamma$.
Thus the estimate holds uniformly for every integer
$n\geq1$, every $\theta\in(0,\theta_2]$, and every
pair $u,u'\notin E_\nu(e^{-n\gamma})$,
completing the proof.
\end{proof}

 We have established a contraction estimate for directions outside a small neighborhood of the equator. To control the contribution of the remaining directions, we now derive a bound of the form $(\eta'(E(r))\le Cr^\beta)$, with $(C,\beta>0)$, for suitable maximal stationary measures. We first obtain this bound for generic measures by iterating Theorem \ref{theorem.estimateadriyaya}. We then extend it to arbitrary nearby measures by approximation. These estimates will allow us to combine contraction away from the equator with control of the mass near it. The next result is a direct consequence of Theorem \ref{theorem.estimateadriyaya}.

\begin{corollary} \label{cor:balance_measureorange}
There exists $C_7 = C_7(\nu)>0$ such that, setting
$\beta_1 = \beta_1(\nu) =\log(2/3)/\log(q)$, the following holds.
For every $r\in(0,1)$, every $\nu'\in\mathcal{N}_1$ generic satisfying $\supp\nu'\subset B_{sr}(\supp\nu)$, the unique $\eta'\in\Stat(\nu')$ satisfies
$$
    \eta'(E(r))\leq C_7 r^{\beta_1}.
$$
\end{corollary}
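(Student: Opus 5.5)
The plan is to iterate Theorem~\ref{theorem.estimateadriyaya} along a geometric sequence of radii. Write $E(r)=E_\nu(r)$. Let $\varepsilon_0$, $q$, $s$ and $\cN_1$ be as in Theorem~\ref{theorem.estimateadriyaya}, and fix $r\in(0,1)$ and a generic $\nu'\in\cN_1$ with $\supp\nu'\subset B_{sr}(\supp\nu)$. We may assume $r<\varepsilon_0$: otherwise the trivial bound $\eta'(E(r))\le 1\le \varepsilon_0^{-\beta_1}r^{\beta_1}$ already gives the claim, provided $C_7\ge\varepsilon_0^{-\beta_1}$.

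For $r<\varepsilon_0$, let $k\ge0$ be the largest integer with $\varepsilon_k:=\varepsilon_0q^{k}\ge r$. Then $r\le\varepsilon_k<r/q$. The point is that the support condition needed at scale $\varepsilon$ in Theorem~\ref{theorem.estimateadriyaya} is $\supp\nu'\subset B_{s\varepsilon}(\supp\nu)$. This condition gets \emph{weaker} as $\varepsilon$ grows. Since $\supp\nu'\subset B_{sr}(\supp\nu)\subset B_{s\varepsilon_j}(\supp\nu)$ for every $j\le k$, the theorem applies at each scale $\varepsilon=\varepsilon_j\in(0,\varepsilon_0]$, $0\le j\le k$. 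This gives
\[
\eta'(E(\varepsilon_{j+1}))=\eta'(E(q\varepsilon_j))\le \tfrac23\,\eta'(E(\varepsilon_j)),\qquad 0\le j\le k .
\]
Chaining these $k+1$ inequalities and using $\eta'(E(\varepsilon_0))\le1$ yields $\eta'(E(\varepsilon_{k+1}))\le (2/3)^{k+1}$.

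Next compare $r$ with the scales. We have $\varepsilon_{k+1}<r\le\varepsilon_k$, so monotonicity only gives $\eta'(E(r))\le\eta'(E(\varepsilon_k))\le(2/3)^k$. Write $(2/3)^k=(q^{k})^{\beta_1}$, using $\beta_1=\log(2/3)/\log q$, which is positive since both logarithms are negative. Then $q^k=\varepsilon_k/\varepsilon_0<r/(q\varepsilon_0)$, so
\[
\eta'(E(r))\le (q\varepsilon_0)^{-\beta_1}\,r^{\beta_1}.
\]
Taking $C_7=(q\varepsilon_0)^{-\beta_1}$, which dominates $\varepsilon_0^{-\beta_1}$ and depends only on $\nu$, covers both cases.

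There is no real obstacle; the only care needed is the direction of the support hypothesis. Because it is monotone in the scale, the single assumption at scale $r$ licenses every application at the larger scales $\varepsilon_j\ge r$. One also needs that $\eta'$ is the unique $\nu'$-stationary measure, which holds by genericity via Remark~\ref{rem:generic-measures}(ii).
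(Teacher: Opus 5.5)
Your proposal is correct and follows the paper's proof essentially step for step. It uses the same geometric scales $\varepsilon_0q^j$ and the same observation that the support hypothesis at scale $r$ implies it at every larger scale, and it arrives at the same constants $\beta_1=\log(2/3)/\log q$ and $C_7=(q\varepsilon_0)^{-\beta_1}$. Your extra application of Theorem~\ref{theorem.estimateadriyaya} at $j=k$ is harmless but unnecessary, as you yourself note.
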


\begin{proof}
First, recall $\varepsilon_0 \in (0,1]$ defined in Theorem \ref{theorem.estimateadriyaya} and set
$\beta_1=\frac{\log(2/3)}{\log q}>0$ and $C_7=(q\varepsilon_0)^{-\beta_1}$. Fix $r\in(0,1)$ and $\nu'\in\mathcal{N}_1$ generic such that
$$
    \supp\nu'\subset B_{sr}(\supp\nu),
$$
and let $\eta'$ be the unique $\nu'$-stationary probability measure.

Suppose first that $0<r\leq\varepsilon_0$. Let $n\geq0$ be an integer 
such that
$$
    q^{n+1}\varepsilon_0<r\leq q^n\varepsilon_0,
$$
and define $t_j=q^j\varepsilon_0$ for $j=0,\dots,n$.
For every $j=0,\dots,n-1$, we have
$$
    r\leq t_n\leq t_j\le \varepsilon_0.
$$
Consequently, for every $j=0,\dots,n$ we get
$$
    \supp\nu'\subset B_{sr}(\supp\nu)
    \subset B_{st_j}(\supp\nu).
$$
Thus, Theorem~\ref{theorem.estimateadriyaya} applies at each
of these scales and gives
$$
    \eta'(E(t_{j+1}))\leq\frac23\eta'(E(t_j)).
$$
Iterating, we obtain
$$
    \eta'(E(t_n))
    \leq\left(\frac23\right)^n\eta'(E(\varepsilon_0))
    \leq\left(\frac23\right)^n.
$$
When $n=0$, the same bound follows directly from
$\eta'(E(\varepsilon_0))\leq1$.

Since $r\leq t_n$, monotonicity of $E(\cdot)$ yields
$$
    \eta'(E(r))
    \leq\eta'(E(t_n))
    \leq\left(\frac23\right)^n.
$$
Moreover, $q^{\beta_1}=2/3$ and $q^{n+1}\varepsilon_0<r$, so
$$
    \left(\frac23\right)^n
    =(q^n)^{\beta_1}
    \leq\left(\frac{r}{q\varepsilon_0}\right)^{\beta_1}
    =C_7 r^{\beta_1}.
$$

Finally, if $\varepsilon_0<r<1$, then
$$
    \eta'(E(r))\leq1
    \leq\left(\frac{r}{q\varepsilon_0}\right)^{\beta_1}
    =C_7 r^{\beta_1}.
$$
This proves the desired estimate.
\end{proof}

For $C_7$ and $\beta_1$ as the previous corollary, the following holds.

\begin{proposition} \label{prop:estimateforallnotonlygeneric} There exists a neighborhood $\mathcal{N}_5=\mathcal{N}_5(\nu)\subset \Prob_c(G)$ of $\nu$ such that for every $r\in(0,1)$ and every $\nu'\in \mathcal{N}_5$ satisfying $\supp\nu'\subset B_{sr}(\supp\nu)$, there exists a maximal $\nu'$-stationary measure $\eta'$
such that
$$
    \eta'(E_\nu(\ell))\leq C_7 \ell^{\beta_1}
    \qquad\text{for every }\ell\in[r,1).
$$
\end{proposition}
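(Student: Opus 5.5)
We approximate $\nu'$ by generic measures, apply Corollary \ref{cor:balance_measureorange} to them, and pass the bound to a weak$^*$ limit of their stationary measures. Two points need care. First, the support condition in the Corollary must hold for each generic approximant at every scale $\ell\ge r$. Second, the limit stationary measure must be maximal for $\nu'$.

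\textbf{Choice of neighborhood.} Take $\mathcal{N}_5$ to be an open neighborhood of $\nu$ whose closure (in $\mathfrak{T}$) lies in $\mathcal{N}_1\cap\mathcal{N}_2$, and such that $\dim E_{\nu'}\le 1$ type pathologies do not arise (see the last paragraph). Fix $r\in(0,1)$ and $\nu'\in\mathcal{N}_5$ with $\supp\nu'\subset B_{sr}(\supp\nu)$.

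\textbf{Generic approximants.} By Remark \ref{rem:generic-measures}(iii) there are generic $\nu_k\to\nu'$ in $\mathfrak{T}$. We need $\supp\nu_k\subset B_{s\ell}(\supp\nu)$ for every $\ell\in[r,1)$, so it suffices to have $\supp\nu_k\subset B_{sr}(\supp\nu)$. Since $\supp\nu'$ is compact and contained in the open set $B_{sr}(\supp\nu)$, it lies in $B_{sr-\rho}(\supp\nu)$ for some $\rho>0$. Hausdorff convergence of supports then gives $\supp\nu_k\subset B_{sr}(\supp\nu)$ for all large $k$. Moreover $\nu_k\in\mathcal{N}_1$ for large $k$.

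Corollary \ref{cor:balance_measureorange}, applied at each $\ell\in[r,1)$, therefore gives $\eta_k(E(\ell))\le C_7\ell^{\beta_1}$ for the unique stationary measure $\eta_k$ of $\nu_k$. Pass to a subsequence with $\eta_k\to\eta'$ weak$^*$. Continuity of $(\mu,\eta)\mapsto\cP_\mu^*\eta$ shows $\eta'$ is $\nu'$-stationary. Since $E(\ell)$ is closed, upper semicontinuity on closed sets gives $\eta'(E(\ell))\ge\limsup_k\eta_k(E(\ell))$, which is the wrong direction. Instead, approximate from above: for $\ell'<\ell$, the open set $\operatorname{int}E(\ell)\supset E(\ell')$ gives
\[
\eta'(E(\ell'))\le\eta'(\operatorname{int}E(\ell))\le\liminf_k\eta_k(E(\ell))\le C_7\ell^{\beta_1}.
\]
Letting $\ell\downarrow\ell'$ yields $\eta'(E(\ell'))\le C_7\ell'^{\beta_1}$ for $\ell'\in(r,1)$. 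The endpoint $\ell'=r$ follows because the Corollary is also valid at $r-\epsilon$ with $r\to$ slightly smaller scales, using the margin $\rho$: the support condition holds at scale $sr-\rho/2$, so we may apply the Corollary on $[r-\rho/(2s),1)$.

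\textbf{Maximality of $\eta'$.} This is the main obstacle. By Proposition \ref{prop.groundediffmaximal}, maximality means $\eta'(E_{\nu'})=0$, whereas our bound controls only neighborhoods of $E_\nu$ down to scale $r$. I would avoid identifying $E_{\nu'}$ and argue via the Lyapunov integral. Since $\nu_k$ is generic, $\eta_k$ is its unique and hence maximal stationary measure, so
\[
\lambda_1(\nu_k)=\int\log\frac{\|gu\|}{\|u\|}\,d\nu_k\,d\eta_k.
\]
The integrand is continuous and the measures converge, so the right side tends to $\int\Phi\,d(\nu'\times\eta')$. By continuity of $\lambda_1$ in $\mathfrak{T}$ (Avila--Eskin--Viana \cite{AEV2023}), $\lambda_1(\nu_k)\to\lambda_1(\nu')$. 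Hence $\int\Phi\,d(\nu'\times\eta')=\lambda_1(\nu')$, and $\eta'$ is maximal by \eqref{eq.lambdaonemaximumintegral}. This step relies on the continuity theorem of \cite{AEV2023}, which I expect to be the delicate point to justify cleanly. In particular it requires convergence in $\mathfrak{T}$, which holds by the choice of the approximants $\nu_k$.
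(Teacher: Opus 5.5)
Your proposal is correct and follows the paper's proof essentially step by step. You approximate $\nu'$ by generic measures in $\mathfrak{T}$, apply Corollary~\ref{cor:balance_measureorange} at every scale $\ell\in[r,1)$, use Portmanteau on open $\ell$-neighborhoods followed by $\ell\downarrow\ell'$, and get maximality of the weak$^*$ limit from the continuity of $\lambda_1$ in \cite{AEV2023}. Your compactness-margin argument for $\supp\nu_k\subset B_{sr}(\supp\nu)$ usefully fills in a step the paper only asserts. Your separate treatment of the endpoint $\ell'=r$ and the vague extra condition on $\mathcal{N}_5$ are unnecessary, since letting $\ell\downarrow r$ through $(r,1)$ already gives the bound at $\ell'=r$.
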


\begin{proof}
Define  $\mathcal{N}_5 = \mathcal{N}_1\cap\,\mathcal{N}_2$,
where $\mathcal{N}_1$ comes from
Corollary~\ref{cor:balance_measureorange} and $\mathcal{N}_2$ is defined in the beginning of
Section~\ref{s:thmA}. Fix $r\in(0,1)$ and $\nu'\in\mathcal{N}_5$ satisfying $\supp\nu'\subset B_{sr}(\supp\nu)$.

Suppose first that $\nu'$ is generic. Then its support
is Zariski-dense, and \cite{AEV2023}*{Proposition 5.2}
provides a unique $\nu'$-stationary measure $\eta'$,
which is also generic (in particular, non-atomic).
Since a maximal stationary measure exists
(see \cite{LLE}*{Proposition 6.7}), uniqueness implies
that $\eta'$ is maximal.
Moreover, for every $\ell\in[r,1)$, we have $\supp\nu'\subset B_{sr}(\supp\nu)\subset B_{s\ell}(\supp\nu)$. Corollary~\ref{cor:balance_measureorange} therefore gives
$$
    \eta'(E_\nu(\ell))\leq C_7 \ell^{\beta_1}
    \qquad\text{for every }\ell\in[r,1).
$$

Now let $\nu'$ be arbitrary.
By \cite{AEV2023}*{Proposition 5.3}, there exists a sequence
of generic measures $(\nu'_j)_{j\geq1}$ converging to
$\nu'$ in the $\fT$-topology.
Since $\mathcal{N}_5$ is open and $\nu'\in\mathcal{N}_5$, we have
$\nu'_j\in\mathcal{N}_5$ for all sufficiently large $j$.
Since convergence in the $\fT$-topology implies
Hausdorff convergence of supports, we also have $\supp\nu'_j\subset B_{sr}(\supp\nu)$ for all sufficiently large $j$. We may assume that
both conditions hold for every $j\in\NN$.

Let $\eta'_j$ be the unique $\nu'_j$-stationary measure.
By the generic case, each $\eta'_j$ is maximal and satisfies 
\begin{equation}\label{eq.balancedestimate}
    \eta'_j(E_\nu(t))\leq C_7 t^{\beta_1}
    \qquad\text{for every }j\in\NN
    \text{ and every }t\in[r,1).
\end{equation}

By compactness of $P$, after passing to a subsequence,
we may assume that
$$
    \eta'_j\overset{\ast}{\rightharpoonup}\eta'
    \qquad\text{for some }\eta'\in\Prob(P).
$$
By \cite{LLE}*{Proposition 5.9}, the measure $\eta'$
is $\nu'$-stationary.

We next show that $\eta'$ is maximal.
By the Hausdorff control of supports in the $\fT$-topology
and the choice of $\mathcal{N}_2$, the supports of $\nu'$ and of all
sufficiently large $\nu'_j$ are contained in the fixed
compact set $V\subset G$. Moreover, the function
$$
    \Phi(g,u)=\log\frac{\|gu\|}{\|u\|}
$$
is bounded and continuous on $V\times P$.
Since $\nu'_j\times\eta'_j\overset{\ast}{\rightharpoonup}\nu'\times\eta'$ and $\eta_j'$ is $\nu_j'$-maximal, we have
$$
    \int_{G\times P}\Phi\,d(\nu'\times\eta')
    =
    \lim_{j\to\infty}
    \int_{G\times P}\Phi\,d(\nu'_j\times\eta'_j)
    =
    \lim_{j\to\infty}\lambda_1(\nu'_j)
    =
    \lambda_1(\nu'),
$$ where the last equality comes from continuity of $\lambda_1$
in the $\fT$-topology (see \cite{AEV2023}*{Theorem B}). Thus $\eta'$ is maximal.

It remains to prove the estimate for this measure.
For $t\in(r,1)$, define
$$
    \tilde{E}_\nu(t)
    =\{u\in P:\ddp(u,E_\nu)<t\}.
$$
Since this set is open, the Portmanteau theorem and
\eqref{eq.balancedestimate} imply
$$
    \eta'(\tilde{E}_\nu(t))
    \leq\liminf_{j\to\infty}
         \eta'_j(\tilde{E}_\nu(t))
    \leq\liminf_{j\to\infty}
         \eta'_j(E_\nu(t))
    \leq C_7 t^{\beta_1}.
$$

Now fix any $\ell\in[r,1)$.
For every $t\in(\ell,1)$, we have
$E_\nu(\ell)\subset\tilde{E}_\nu(t)$, and hence
$$
    \eta'(E_\nu(\ell))\leq C_7 t^{\beta_1}.
$$
Letting $t\downarrow\ell$, we obtain
$$
    \eta'(E_\nu(\ell))\leq C_7 \ell^{\beta_1}.
$$
Since $\ell\in[r,1)$ was arbitrary, we conclude that
$\eta'$ satisfies the estimate for every
$\ell\in[r,1)$.
\end{proof}

As noted in Remark \ref{remark.uniquenessofmaximalstationary},
if $\nu\in\operatorname{Prob}_c(G)$ has a simple top Lyapunov
exponent, then it admits a unique maximal stationary measure,
which we denote by $\eta_\nu$.
We now combine the preceding estimates with
Lemma \ref{lemma:integralestimateofthedistanceiteratemarkovoperator}
to bound
$\dtheta(\cP_\nu^{*n}\eta_\nu,\cP_\nu^{*n}\eta')$
for a suitable maximal $\nu'$-stationary measure $\eta'$.
We split the integral into pairs of directions that both lie
outside a small neighborhood of the equator and pairs for which
at least one direction lies inside that neighborhood.
Proposition \ref{propIntegralEstimateDistance} controls the
first contribution, while
Proposition \ref{prop:estimateforallnotonlygeneric}
bounds the mass of the neighborhood under $\eta_\nu$ and $\eta'$,
thereby controlling the second contribution.

\begin{proposition}
\label{propEstimateDistanceBalancedMeasures}
    Let $\nu\in \operatorname{Prob}_c(G)$ be such that $\lambda_1(\nu)$ is simple and $\dim E_\nu=1$. Consider $\gamma \in (0, \gamma_0/3]$. There exist constants $C_8 = C_8(\nu, \gamma)>0$, $\tau_3 = \tau_3(\nu,\gamma) \in(0,1)$ and $\theta_3=\theta_3(\nu, \gamma)>0$
such that the following holds. For every integer $n\geq1$ and every
$\nu'\in \mathcal{N}_5$ satisfying $\supp\nu'\subset B_{se^{-n\gamma}}(\supp\nu)$, there exists a maximal $\nu'$-stationary measure $\eta'$
such that
$$
\dtheta(\cP_\nu^{*n}\eta_\nu,\cP_\nu^{*n}\eta')
    \leq C_8\tau_3^{n\theta}
$$ for every $\theta\in(0,\theta_3]$.
\end{proposition}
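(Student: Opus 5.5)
Fix $\gamma\in(0,\gamma_0/3]$ and an integer $n\ge1$, and let $\nu'\in\mathcal N_5$ satisfy $\supp\nu'\subset B_{se^{-n\gamma}}(\supp\nu)$. The plan is to pick the measures, split the coupling integral of Lemma~\ref{lemma:integralestimateofthedistanceiteratemarkovoperator} into a "good" region away from the equator and a "bad" region near it, and bound each piece separately.

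\textbf{Choice of measures.} Set $r=e^{-n\gamma}\in(0,1)$ and apply Proposition~\ref{prop:estimateforallnotonlygeneric} with this $r$. This gives a maximal $\nu'$-stationary measure $\eta'$ with $\eta'(E_\nu(r))\le C_7r^{\beta_1}=C_7e^{-n\gamma\beta_1}$. For $\eta_\nu$, we need the same type of bound. Since $\nu$ itself satisfies $\supp\nu\subset B_{sr}(\supp\nu)$ for every $r$, it is natural to apply Proposition~\ref{prop:estimateforallnotonlygeneric} to $\nu'=\nu$. This yields a maximal $\nu$-stationary measure with the estimate at every scale, and by uniqueness of the maximal measure (Remark~\ref{remark.uniquenessofmaximalstationary}) this measure is $\eta_\nu$. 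Hence $\eta_\nu(E_\nu(r))\le C_7 r^{\beta_1}$ for all $r\in(0,1)$. Alternatively, $\eta_\nu(E_\nu)=0$ already gives continuity from above, but the quantitative bound is what we need.

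\textbf{Splitting the integral.} Lemma~\ref{lemma:integralestimateofthedistanceiteratemarkovoperator} gives
\[
\dtheta(\cP_\nu^{*n}\eta_\nu,\cP_\nu^{*n}\eta')\le\int_{P\times P}\Big(\int_G\ddp(gu,gu')^\theta\,d\nu^{(n)}(g)\Big)\,d\eta_\nu(u)\,d\eta'(u').
\]
We split $P\times P$ into the good set $\{u\notin E_\nu(e^{-n\gamma}),\ u'\notin E_\nu(e^{-n\gamma})\}$ and its complement. On the good set, Proposition~\ref{propIntegralEstimateDistance} (note that $\dim E_\nu=1\ge1$) bounds the inner integral by $C_6\tau_2^{n\theta}$ for all $\theta\le\theta_2$. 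On the complement, we use the crude bound $\ddp\le1$. The complement has $\eta_\nu\times\eta'$-measure at most $\eta_\nu(E_\nu(r))+\eta'(E_\nu(r))\le 2C_7e^{-n\gamma\beta_1}$.

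\textbf{Combining.} Together this gives
\[
\dtheta(\cP_\nu^{*n}\eta_\nu,\cP_\nu^{*n}\eta')\le C_6\tau_2^{n\theta}+2C_7e^{-n\gamma\beta_1}.
\]
Choose $\tau_3=\tau_2=e^{-2\gamma}$ and $\theta_3=\min\{\theta_2,\beta_1/2\}$. Then for $\theta\le\theta_3$ we have $2\gamma\theta\le\gamma\beta_1$, so $e^{-n\gamma\beta_1}\le\tau_3^{n\theta}$. Setting $C_8=C_6+2C_7$ gives the claimed bound. All constants depend only on $\nu$ and $\gamma$, and $\eta'$ does not depend on $\theta$, as the statement requires.

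\textbf{Main obstacle.} The delicate point is justifying the polynomial mass bound for $\eta_\nu$ itself. Proposition~\ref{prop:estimateforallnotonlygeneric} requires $\nu\in\mathcal N_5$, which holds since $\nu\in\mathcal N_1\cap\mathcal N_2$, and it delivers some maximal stationary measure. We then need uniqueness of the maximal measure (Aoun--Guivarc'h, using simplicity) to identify that measure with $\eta_\nu$. I would check this identification carefully. If it failed, the fallback is to rerun the approximation argument of that proposition directly for $\nu$ with generic approximants $\nu_j\to\nu$: the weak$^*$ limit of their stationary measures is maximal, hence equals $\eta_\nu$, and the Portmanteau argument transfers the bound.
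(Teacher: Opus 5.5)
Your proposal is correct and follows the paper's proof essentially step by step. You choose $\eta'$ by applying Proposition~\ref{prop:estimateforallnotonlygeneric} at $r=e^{-n\gamma}$, you get the mass bound for $\eta_\nu$ by applying that proposition to $\nu$ itself together with uniqueness of the maximal measure, and you split the coupling integral into the region outside $E_\nu(e^{-n\gamma})^{\,c}$ versus the rest.

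The only difference is cosmetic. You use the explicit value $\tau_2=e^{-2\gamma}$ from the proof of Proposition~\ref{propIntegralEstimateDistance} and take $\theta_3=\min\{\theta_2,\beta_1/2\}$. The paper instead sets $\theta_3=\min\{\theta_2,\gamma\beta_1\}$ and $\tau_3=\max\{\tau_2,e^{-1}\}$, which needs only $\tau_2\in(0,1)$ as stated.
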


\begin{proof}
Let $\mathcal{N}_5$, $C_7>0$ and $\beta_1>0$ be as in
Proposition~\ref{prop:estimateforallnotonlygeneric}.

Since $\nu\in\mathcal{N}_5$ and
$\supp\nu\subset B_{st}(\supp\nu)$ for every $t\in(0,1)$,
that proposition and uniqueness of the maximal
$\nu$-stationary measure imply that, for every $t\in(0,1)$,
$$
    \eta_\nu(E_\nu(t))\leq C_7 t^{\beta_1}.    
$$

Fix an integer $n\geq1$ and $\nu'\in\mathcal{N}_5$
such that $\supp\nu'\subset B_{se^{-n\gamma}}(\supp\nu)$. Applying Proposition~\ref{prop:estimateforallnotonlygeneric}
with $r=e^{-n\gamma}$, choose a maximal
$\nu'$-stationary measure $\eta'$ satisfying
$$
    \eta'(E_\nu(e^{-n\gamma}))
    \leq C_7 e^{-n\gamma\beta_1}.
$$ In particular,
\begin{equation}\label{eq:equator-mass-bounds}
    \eta_\nu(E_\nu(e^{-n\gamma}))
    +\eta'(E_\nu(e^{-n\gamma}))
    \leq 2C_7 e^{-n\gamma\beta_1}.
\end{equation}

By Lemma~\ref{lemma:integralestimateofthedistanceiteratemarkovoperator},
for every $\theta\in(0,1]$,
$$
\dtheta(\cP_\nu^{*n}\eta_\nu,\cP_\nu^{*n}\eta')
    \leq
    \int_{G\times P\times P}
    \ddp(gu,gu')^\theta
    \,d\nu^{(n)}(g)\,d\eta_\nu(u)\,d\eta'(u').
$$ Set
$$
    E_n=E_\nu(e^{-n\gamma}),
    \qquad
    K_{n,\gamma}=(P\setminus E_n)\times(P\setminus E_n).
$$
We estimate this integral separately over
$G\times K_{n,\gamma}$ and $G\times K_{n,\gamma}^C$.

\begin{figure}[h]
     \centering
     \includegraphics[width=0.4\linewidth]{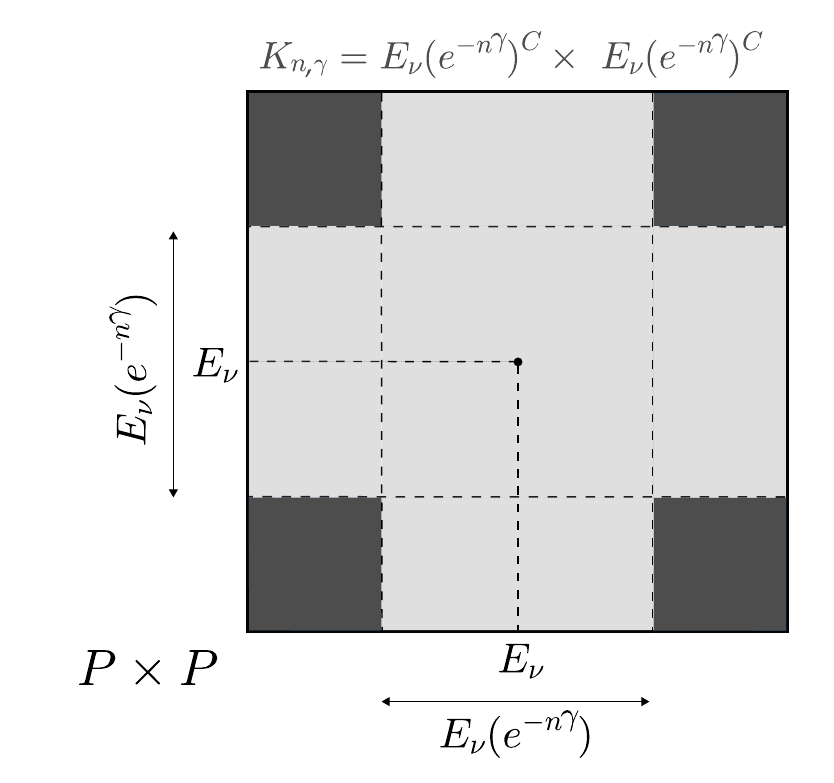}
     \caption{Schematic representation of the decomposition of
$P\times P$. The dark region represents
$K_{n,\gamma}$ and the light region represents its complement $K_{n,\gamma}^C$.}
     \label{fig:partitionKgamma}
 \end{figure}

For $(u,u')\in K_{n,\gamma}$,
Proposition~\ref{propIntegralEstimateDistance} gives
$$
    \int_G\ddp(gu,gu')^\theta\,d\nu^{(n)}(g)
    \leq C_6\tau_2^{n\theta}
$$
whenever $0<\theta\leq\theta_2$.
Integrating with respect to the product $\eta_\nu\times\eta'$,
we obtain
$$
    \int_{G\times K_{n,\gamma}}
    \ddp(gu,gu')^\theta
    \,d\nu^{(n)}(g)\,d\eta_\nu(u)\,d\eta'(u')
    \leq C_6\tau_2^{n\theta}.
$$ On the other hand,
$$
    K_{n,\gamma}^C=(E_n\times P)\cup(P\times E_n).
$$ Since $\ddp(gu,gu')^\theta\leq1$ and $\nu^{(n)}$
is a probability measure, \eqref{eq:equator-mass-bounds}
yields
$$
\begin{aligned}
    \int_{G\times K_{n,\gamma}^C}
    \ddp(gu,gu')^\theta
    \,d\nu^{(n)}(g)\,d\eta_\nu(u)\,d\eta'(u')
    &\leq
    (\eta_\nu\times\eta')(K_{n,\gamma}^C)\\
    &\leq\eta_\nu(E_n)+\eta'(E_n)\\
    &\leq2C_7e^{-n\gamma\beta_1}.
\end{aligned}
$$

Combining these estimates, we conclude that
$$
    \dtheta(\cP_\nu^{*n}\eta_\nu,\cP_\nu^{*n}\eta')
    \leq C_6\tau_2^{n\theta}
         +2C_7 e^{-n\gamma\beta_1}.
$$

Finally, set $ \theta_3=\min\{\theta_2,\gamma\beta_1\}, \tau_3=\max\{\tau_2,e^{-1}\},$ and $C_8= C_6+2C_7$. For every $\theta\in(0,\theta_3]$, we have $e^{-n\gamma\beta_1}\leq e^{-n\theta}
    \leq\tau_3^{n\theta}$ and $\tau_2^{n\theta}\leq\tau_3^{n\theta}$. Therefore,
$$
    \dtheta(\cP_\nu^{*n}\eta_\nu,\cP_\nu^{*n}\eta')
    \leq C_8\tau_3^{n\theta}
$$ for every $\theta\in(0,\theta_3]$.
\end{proof}

Our ultimate goal is to prove the Hölder continuity of the first Lyapunov exponent using Proposition \ref{prop.desmarkovprincipal}. To do so, we must first revisit Proposition \ref{prop.essentialinequalitydistancespartone} and, by applying the preceding estimate, establish an inequality of the form $d(\eta_\nu,\eta')\le C\delta_{\fT,\theta}(\nu,\nu')^\beta$. The following proposition provides such an inequality.

Let $\theta_3 = \theta_3(\nu) \in (0,1)$ be as in the previous proposition.

\begin{proposition}
\label{proposition.finalinequalitydistance}
Let $\nu\in\Prob_c(G)$ be such that $\lambda_1(\nu)$
is simple and $\dim E_\nu=1$.   For every
$\theta\in(0,\theta_3]$, there exist a neighborhood
$\mathcal{N}_6=\mathcal{N}_6(\nu,\theta)\subset \operatorname{Prob}_c(G)$ of $\nu$ and constants
$C_9=C_9(\nu,\theta)>0$ and
$\beta_2=\beta_2(\nu,\theta)\in(0,1)$ with the following property:
for every $\nu'\in\mathcal{N}_6$, there exists a maximal
$\nu'$-stationary measure $\eta'$ satisfying
\begin{equation}
\label{eq.distanceestimatewithequator}
    \dtheta(\eta_\nu,\eta')
    \leq C_9\delta_{\fT,\theta}(\nu,\nu')^{\beta_2}.
\end{equation}
\end{proposition}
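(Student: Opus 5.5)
We combine Proposition~\ref{prop.essentialinequalitydistancespartone} with Proposition~\ref{propEstimateDistanceBalancedMeasures}, choosing the number of iterations $n$ as a function of the size $\delta:=\delta_{\fT,\theta}(\nu,\nu')$ of the perturbation. Fix $\gamma=\gamma_0/3$ and $\theta\in(0,\theta_3]$. Take $\mathcal{N}_6\subset\mathcal{N}_4\cap\mathcal{N}_5$ small enough that $\delta<1$ and that the value of $n$ chosen below is at least $1$ for every $\nu'\in\mathcal{N}_6$. The key observation is that the Hausdorff part of $\delta_{\fT,\theta}$ controls the support condition: if $\delta_H(\supp\nu,\supp\nu')<\delta$, then $\supp\nu'\subset B_{\delta}(\supp\nu)$. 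Hence $\supp\nu'\subset B_{se^{-n\gamma}}(\supp\nu)$ holds as soon as $\delta\le s e^{-n\gamma}$, that is, whenever
$$
n\le \frac{1}{\gamma}\log\frac{s}{\delta}.
$$
The Hausdorff distance is measured with $\delta$ and not with $\delta^\theta$. If the convention in $\delta_H$ were instead with respect to $\delta^\theta$, I would replace $\delta$ by $\delta^{1/\theta}$ in this inequality, which only changes the exponents below.

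Given $\nu'\in\mathcal{N}_6$, I will set $n=\lfloor \gamma^{-1}\log(s/\delta)\rfloor$. Proposition~\ref{propEstimateDistanceBalancedMeasures} then yields a maximal $\nu'$-stationary measure $\eta'$ with
$$
\dtheta(\cP_\nu^{*n}\eta_\nu,\cP_\nu^{*n}\eta')\le C_8\tau_3^{n\theta}.
$$
Since $\eta_\nu\in\Stat(\nu)$ and $\eta'\in\Stat(\nu')$, Proposition~\ref{prop.essentialinequalitydistancespartone} gives
$$
\dtheta(\eta_\nu,\eta')\le C_2C_3^n\delta_\theta(\nu,\nu')+C_8\tau_3^{n\theta}\le C_2C_3^n\delta+C_8\tau_3^{n\theta}.
$$
No absorption argument is needed, because the second term is already explicitly small.

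The remaining difficulty is balancing the two terms. The choice of $n$ forced by the support condition gives $C_3^n\le (s/\delta)^{\log C_3/\gamma}$. Since $\log C_3/\gamma$ may exceed $1$, the first term $C_2C_3^n\delta$ could fail to be small. This is the main obstacle. The fix is to take $n$ smaller, namely
$$
n=\lfloor \kappa\log(1/\delta)\rfloor,\qquad \kappa=\min\Big\{\frac{1}{2\log C_3},\ \frac{1}{2\gamma}\Big\}.
$$
With this choice $C_3^n\le\delta^{-1/2}$. Moreover $e^{-n\gamma}\ge \delta^{1/2}\ge \delta/s$ for $\delta$ small (shrinking $\mathcal{N}_6$), so the support hypothesis of Proposition~\ref{propEstimateDistanceBalancedMeasures} still holds. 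Then
$$
\tau_3^{n\theta}\le \tau_3^{-\theta}\delta^{\kappa\theta\log(1/\tau_3)},
$$
and therefore
$$
\dtheta(\eta_\nu,\eta')\le C_2\delta^{1/2}+C_8\tau_3^{-\theta}\delta^{\kappa\theta\log(1/\tau_3)}.
$$
Setting $\beta_2=\min\{1/2,\ \kappa\theta\log(1/\tau_3)\}$, which lies in $(0,1)$, and $C_9=C_2+C_8\tau_3^{-\theta}$, and using $\delta<1$, we obtain \eqref{eq.distanceestimatewithequator}.

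Finally, the degenerate case $\delta=0$, i.e. $\nu'=\nu$, is handled by taking $\eta'=\eta_\nu$. This case is excluded from the choice of $n$ above, which requires $\delta>0$.
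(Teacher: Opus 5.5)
Your proposal is correct and follows essentially the same route as the paper. You combine the two estimates and take $n\approx\kappa\log(1/\delta)$, with the rate limited by both $C_3$ (so that $C_3^n\delta\le\delta^{1/2}$) and $\gamma$ (so that the Hausdorff part of $\delta_{\fT,\theta}$ forces $\supp\nu'\subset B_{se^{-n\gamma}}(\supp\nu)$). This is exactly the paper's choice of $n_0$ as the largest integer with $\delta_{\fT,\theta}(\nu,\nu')\le\hat\delta\,\zeta^{n_0}$, where $\zeta=\min\{C_3^{-2},e^{-\gamma}\}$, up to inessential constants.
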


\begin{proof}
Let $\mathcal{N}_5$, $C_8>0$, $\tau_3\in(0,1)$
and $\theta_3>0$ be as in
Proposition~\ref{propEstimateDistanceBalancedMeasures}.
Let $\mathcal{N}_4$, $C_2$ and
$C_3$ be as in
Proposition~\ref{prop.essentialinequalitydistancespartone}. Fix $\theta\in(0,\theta_3]$. Combining these two propositions, we obtain the following: for every integer $n\geq1$ and every $\nu'\in\mathcal{N}_4\cap \mathcal{N}_5$ satisfying $\supp\nu'\subset B_{se^{-n\gamma}}(\supp\nu),$ there exists a maximal $\nu'$-stationary measure $\eta'$
such that
\begin{equation}\label{eq:combined-stationary-estimate}
    \dtheta(\eta_\nu,\eta')
    \leq C_2 C_3^n\delta_\theta(\nu,\nu')
    +C_8\tau_3^{n\theta}.
\end{equation}

Choose $\hat\delta=\hat\delta(\theta)$ with $0<\hat\delta< \min\{1,s\}$ such that
$$
    \{\nu'\in\Prob_c(G):
      \delta_{\fT,\theta}(\nu,\nu')<\hat\delta\}
    \subset \mathcal{N}_4 \cap\, \mathcal{N}_5.
$$ Set $ \zeta=\min\{C_3^{-2},e^{-\gamma}\}\in(0,1)$ and define the neighborhood
$$
    \mathcal{N}_6=
    \{\nu'\in\Prob_c(G):
      \delta_{\fT,\theta}(\nu,\nu')<\hat\delta\zeta\}.
$$ Fix $\nu'\in\mathcal{N}_6$ and write $D=\delta_{\fT,\theta}(\nu,\nu').$ If $D=0$, then $\nu'=\nu$, and the conclusion follows
by taking $\eta'=\eta_\nu$.
From now on, assume that $D>0$.

The idea is to choose $n=n_0$ depending on $D$ so that
both terms on the right-hand side of
\eqref{eq:combined-stationary-estimate}
are bounded by positive powers of $D$.
At the same time, this choice must ensure that
$\supp\nu'\subset B_{se^{-n_0\gamma}}(\supp\nu)$,
so that the estimate applies.
Since $\delta_H(\supp\nu,\supp\nu')\leq D$, it suffices
to choose $n_0$ so that $D<se^{-n_0\gamma}$.

With this in mind, let $n_0$ be the largest natural number such that
    \begin{equation}
        \label{defnzero}\delta_{\fT,\theta}(\nu,\nu') = D\le \hat{\delta} \zeta^{n_0}.
    \end{equation} Then,
$$\delta_H(\supp\nu,\supp\nu')\leq D\leq\hat\delta\zeta^{n_0}\leq\hat\delta e^{-n_0\gamma}<se^{-n_0\gamma}.
$$
Thus $\supp\nu'\subset B_{se^{-n_0\gamma}}(\supp\nu)$. We may therefore apply
\eqref{eq:combined-stationary-estimate} with $n=n_0$
and choose a maximal $\nu'$-stationary measure $\eta'$
satisfying that estimate. First, notice that
    $$
            n_0 \le \frac{\log{\frac{\delta_{\fT,\theta}(\nu,\nu')}{\hat{\delta}}}}{\log \zeta}.
    $$ Define $x_0:=\frac{\log {(\delta_{\fT,\theta}(\nu,\nu')/\hat{\delta})}}{\log \zeta}\in \RR$. By the maximality of $n_0$, $\hat\delta\zeta^{n_0+1} <D\leq\hat\delta\zeta^{n_0}$, and hence $x_0-1<n_0\leq x_0$. Now we can bound 
    \begin{align}
        C_8\tau_3^{\theta n_0} &\le C_8\tau_3^{\theta (x_0-1)} \\
        & = \frac{C_8}{\tau_3^\theta} \exp{\left(\theta\frac{\log{\tau_3}}{\log\zeta}\log\frac{\delta_{\fT,\theta}(\nu,\nu')}{\hat{\delta}}\right)}\\
        &= C_8 \tau_3^{-\theta}\hat{\delta}^{-\theta\frac{\log \tau_3}{\log \zeta}}\delta_{\fT,\theta}(\nu,\nu')^{\theta\frac{\log \tau_3}{\log \zeta}},
    \end{align} Moreover, by the definition of $\zeta$ and equation \eqref{defnzero}, we get
    \begin{align}
        C_2 C_3^{n_0} \delta_\theta(\nu,\nu') 
        &\le C_2 \zeta^{-n_0/2} \delta_{\fT,\theta}(\nu,\nu') \\
        &\le C_2 \hat{\delta}^{1/2} \delta_{\fT,\theta} (\nu,\nu')^{1/2}.
    \end{align} To conclude, take $\beta_2\coloneqq \min \{\theta\log \tau_3/\log \zeta, 1/2\}$ and $C_9\coloneqq C_8 \tau_3^{-\theta}\hat{\delta}^{-\theta\frac{\log \tau_3}{\log \zeta}} + C_2 \hat{\delta}^{1/2}$. Note that the neighborhood $\mathcal{N}_6$ and the constants $C_9,\beta_2$ may depend on $\theta$, but are independent of $\nu'$.

\end{proof}

\subsubsection{Conclusion of the proof of the one-dimensional case}
\label{sectionholder2}

Let $\nu\in\Prob_c(G)$ be such that $\lambda_1(\nu)$
is simple and $\dim E_\nu=1$, and let $\eta_\nu$
be its unique maximal stationary measure.
Set $\theta_0=\theta_3$, where $\theta_3$
is given by Proposition~\ref{proposition.finalinequalitydistance},
and fix $\theta\in(0,\theta_0]$.

By Proposition~\ref{proposition.finalinequalitydistance},
there exist a neighborhood $\mathcal{N}_6$ of $\nu$
and constants $C_9>0$ and $\beta_2\in(0,1)$ such that,
for every $\nu'\in\mathcal{N}_6$, there exists a maximal
$\nu'$-stationary measure $\eta_{\nu'}$ satisfying
$$
    \dtheta(\eta_\nu,\eta_{\nu'})
    \leq C_9\delta_{\fT,\theta}(\nu,\nu')^{\beta_2}.
$$

Now, let  $\mathcal{N}_3$ and $C_1>0$ be as in
Proposition~\ref{prop.desmarkovprincipal}, and set $\cN_0= \mathcal{N}_3 \cap\mathcal{N}_6$. Fix $\nu'\in\cN_0$ and choose a maximal
$\nu'$-stationary measure $\eta_{\nu'}$
satisfying the estimate above.
By Proposition~\ref{prop.desmarkovprincipal},
$$
\begin{aligned}
    |\lambda_1(\nu)-\lambda_1(\nu')|
    &\leq C_1\delta_\theta(\nu,\nu')
         +C_1\dtheta(\eta_\nu,\eta_{\nu'})\\
    &\leq C_1\delta_{\fT,\theta}(\nu,\nu')
         +C_1C_9\delta_{\fT,\theta}(\nu,\nu')^{\beta_2}\\
    &\leq C_1(1+C_9)
         \delta_{\fT,\theta}(\nu,\nu')^{\beta_2},
\end{aligned}
$$
where the last inequality follows from
$0\leq\delta_{\fT,\theta}(\nu,\nu')<1$ and
$0<\beta_2<1$.

Thus, setting $C_0=C_1(1+C_9)$ and $\beta_0=\beta_2$,
we obtain
$$
    |\lambda_1(\nu)-\lambda_1(\nu')|
    \leq C_0\delta_{\fT,\theta}(\nu,\nu')^{\beta_0}
$$ for every $\nu'\in\cN_0$. Note that the neighborhood $\cN_0$ and the constants $C_0,\beta_0$
may depend on $\theta$, but are independent of $\nu'$.
This finishes the proof of
Theorem~\ref{theorem.HolderCont}.

%% file: body/4ProvaThmB.tex
\subsection{Outline of the proof of Theorem B}
The proof combines local expansion with a coupling argument.
Since $E$ is one-dimensional, the vertical angle factor equals one,
so the energy can be expressed solely in terms of projective distance.
AEV's directional expansion estimate gives local pair expansion and
annular escape, with probability bounds uniform in the transverse
direction. At each scale $\varepsilon$, we construct an adapted
localized Markov operator on $X=E(\varepsilon)$ that preserves the
normalized restriction $\mu$ of $\eta'$ to $X$.
We then couple two copies of this operator, using the same random
matrix near the equator and recoupling pairs farther away to prevent
their images from both entering a smaller neighborhood.
The resulting operator strictly decreases a cutoff logarithmic energy
 in expectation on $E(q\varepsilon)^2$, while allowing only a controlled
increase elsewhere. Non-atomicity ensures that the energy has a finite
minimum among the self-couplings of $\mu$. Since the coupled operator preserves the marginals, applying it to a minimizer cannot lower the
total energy. Integrating the drift estimates therefore bounds the
minimizer's mass on $E(q\varepsilon)^2$; its marginal identities then
give the desired factor $2/3$. The parameters depend only on $\nu$,
and all auxiliary radii are fixed multiples of $\varepsilon$, making
the estimate uniform in scale.

Let $\nu\in \Prob_c(G)$ and let $E=E_\nu$ be the $\nu$-equator. Suppose that $\dim E_\nu=1$. Write $e=\PP(E)$ for the corresponding projective point.
\subsection{Vertical angle}
For distinct projective points $x,x'$, let $\cY(x,x')\subset V$ be the
two-dimensional subspace spanned by their representing lines. Define
\begin{equation}\label{eq:vertical-angle}
 \VA_1(x,x')=
 \sup_{0\neq u\in\cY(x,x')}
 \frac{\norm{\pi_{E^\perp}u}}{\norm{u}},
\end{equation}
where $\pi_{E^\perp}$ is orthogonal projection onto $E^\perp$.
\begin{lemma}\label{lem:noStabilization}
If $\dim E=1$, then $\VA_1(x,x')=1$ for every $x\neq x'$.
\end{lemma}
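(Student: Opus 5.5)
The claim is an elementary dimension count in the two-dimensional subspace $\cY(x,x')$, so I will not invoke any dynamical input. First I bound $\VA_1(x,x')$ from above. For every $u\neq0$ the orthogonal projection satisfies $\norm{\pi_{E^\perp}u}\le\norm{u}$, so each quotient in \eqref{eq:vertical-angle} is at most $1$. Hence $\VA_1(x,x')\le 1$.

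For the reverse inequality it suffices to find a nonzero $u\in\cY(x,x')$ with $\pi_{E^\perp}u=u$, that is, $u\in\cY(x,x')\cap E^\perp$. Since $x\neq x'$, the lines representing them are distinct, so $\dim\cY(x,x')=2$. Since $\dim E=1$, the complement $E^\perp$ is a hyperplane of dimension $d-1$. The dimension formula then gives
\[
\dim\bigl(\cY(x,x')\cap E^\perp\bigr)\ \ge\ 2+(d-1)-d\ =\ 1 .
\]
I therefore pick a nonzero $u$ in this intersection. For it, $\pi_{E^\perp}u=u$, so the quotient equals $1$ and the supremum is attained. Combining the two bounds yields $\VA_1(x,x')=1$.

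The only point needing care is that $\cY(x,x')$ really is two-dimensional. This is guaranteed by $x\neq x'$ in $P$, since their representing lines are then distinct and span a plane. I also note that the argument does not need $E^\perp\not\supset\cY(x,x')$; if the whole plane lies in $E^\perp$, the conclusion holds trivially. There is no real obstacle here. The content of the lemma is exactly that a one-dimensional $E$ has a codimension-one complement, and every plane meets a hyperplane nontrivially.
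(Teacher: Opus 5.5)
Your proof is correct and follows the paper's argument: the same dimension count $2+(d-1)-d=1$ gives a nonzero $u\in\cY(x,x')\cap E^\perp$, at which the quotient equals $1$. You also state the upper bound $\norm{\pi_{E^\perp}u}\le\norm{u}$ explicitly, which the paper leaves implicit.
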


\begin{proof}
The dimension inequality gives
$$
 \dim\bigl(\cY(x,x')\cap E^\perp\bigr)
 \geq 2+(d-1)-d=1.
$$
Choose $0\neq u$ in this intersection. Then
$\pi_{E^\perp}u=u$, so the quotient in
\eqref{eq:vertical-angle} equals $1$.
\end{proof}

This removes the vertical angle stabilization condition in \cite{AEV2023}*{Section~9}.
\subsection{Fixing the parameters}
Fix a compact neighborhood $W\subset G$ of $K:=\supp\nu$.
By compactness and smoothness of the projective action, choose $A = A(\nu)\geq1$,
depending only on $\nu$, such that
\begin{equation}\label{eq:compactNbhdEstimate}
-An + \log d(x, x') \leq \log d(gx, gx') \leq An + \log d(x, x')
\end{equation}
 Moreover, let $B = B(\nu) > 0$ be defined by 
 \begin{equation}\label{eq:normEstimateB}
 	B = \sup \set{\log\norm{f} + \log\norm{f^{-1}}: \, f\in W} + 2
 \end{equation}
fro every product $g=h_n\cdots h_1$, with $h_i\in W$.

Take $\kappa_0=\kappa_0(\nu)>0$ from
\cite{AEV2023}*{Proposition~5.5}. Let $\kappa_1'''(\nu)$ and $C_1'''(\nu)$ be the drift constants in \cite{AEV2023}*{Proposition~11.2}. They depend only on $\nu$, independently of $\delta$, $n$, and the radii.
Define 
\begin{equation}\label{eq:aev-constants}
 \kappa_*:=\frac12\kappa_1'''(\nu),
 \qquad
 C_*:=\max\{C_1'''(\nu),\log2,A+\kappa_*\},
 \qquad
 C_1'(\nu)=B+\gamma_1 A.
\end{equation}
Here the last identity is \cite{AEV2023}*{Equation (8.34)}.
The smaller normalization for $\kappa_*$ agrees with the proof of 
\cite{AEV2023}*{Proposition~9.3} and is recalled in \cite{AEV2023}*{Section~10}. Equations~(8.19) and~(8.26) of \cite{AEV2023} give
\begin{equation}\label{eq:kappa-comparison}
 0<\kappa_*\leq\frac{\kappa_0}{8},
\end{equation}
because $\widehat\kappa_1=\kappa_0/2$, $\gamma_1\leq1$, and
$\kappa_1'\leq\gamma_1\widehat\kappa_1/2$.

Choose
\begin{equation}\label{eq:choiceDelta}
 0<\delta<\min\left\{\frac14,\frac{\kappa_*}{20C_*}\right\},
\end{equation}
and choose an integer $n\geq N_0(\nu,\delta/2)$ large enough such that
\begin{align}
 e^{\kappa_0n/4}&\geq16,\label{eq:n-chart}\\
 6e^{-\kappa_0n/2}&<1,\label{eq:n-radii}\\
 (\kappa_*-C_*\delta)n&>9C_*(1+\delta n).
 \label{eq:n-drift}
\end{align}
The second condition is possible because it is equivalent to
$$
 (\kappa_*-10C_*\delta)n>9C_*,
$$
and \eqref{eq:choiceDelta} implies
$\kappa_*-10C_*\delta>\kappa_*/2>0$.

Set
\begin{equation}\label{eq:margulisConstant}
 \kappa_A=(\kappa_*-C_*\delta)n,
 \qquad
 \kappa_B=C_*(1+\delta n).
\end{equation}
These constants are positive and satisfy
\begin{equation}\label{eq:relationKappaAB}
 \kappa_A>9\kappa_B,
 \qquad
 \theta:=\frac{\kappa_B}{\kappa_A+\kappa_B}<\frac1{10}.
\end{equation}
Once $\delta$ and $n$ have been fixed, $\kappa_A$ and $\kappa_B$ are
independent of the working radius $\varepsilon$.

\subsection{Radii and support perturbations}
Let $L_n=e^{An}$ and choose
\begin{equation}\label{eq:radii-ratios}
 \widetilde a=\frac1{8L_n},
 \qquad
 a_1=\frac{\widetilde a}{8L_n}.
\end{equation}
Then $0<a_1<\widetilde a<1$ and
\begin{equation}\label{eq:radii-slack}
 2L_n\widetilde a<\frac12,
 \qquad
 2L_na_1<\frac{\widetilde a}{2}.
\end{equation}
For any $0<\varepsilon\leq1$, define
\begin{equation}\label{eq:radii-prime}
 \varepsilon'=a_1\varepsilon,
 \qquad
 \widetilde\varepsilon=\widetilde a\varepsilon.
\end{equation}
%
\begin{lemma}\label{lem:support-perturbation}
There exists $s = s(\nu)>0$, independent of $\varepsilon$, such
that if $\supp\nu'\subset B_{s\varepsilon}(\supp \nu)$, then
every product $g'=h_n'\cdots h_1'$ with $h_i'\in\supp\nu'$ satisfies
\begin{align}
 g'E(2\widetilde\varepsilon)
 &\subset E(\varepsilon/2),\label{eq:forward-inclusion}\\
 (g')^{-1}E(2\varepsilon')
 &\subset E(\widetilde\varepsilon/2).
 \label{eq:inverse-inclusion}
\end{align}
\end{lemma}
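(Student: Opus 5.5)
Since $E$ is $\nu$-invariant, every $g\in\supp\nu$ satisfies $gE=E$, so the projective point $e=\PP(E)$ is fixed by all products $g=h_n\cdots h_1$ with $h_i\in\supp\nu$. For $x\in P$ and such $g$, \eqref{eq:compactNbhdEstimate} gives
$$
\ddp(gx,e)=\ddp(gx,ge)\le L_n\,\ddp(x,e),
$$
and similarly $\ddp(g^{-1}x,e)\le L_n\,\ddp(x,e)$. Here we use that the inverse products also lie in a compact set on which the analogous Lipschitz bound holds, and we enlarge $A$ if needed. Hence $gE(2\widetilde\varepsilon)\subset E(2L_n\widetilde\varepsilon)$ and $g^{-1}E(2\varepsilon')\subset E(2L_n\varepsilon')$. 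By \eqref{eq:radii-slack}, $2L_n\widetilde\varepsilon<\varepsilon/2$ and $2L_na_1\varepsilon<\widetilde\varepsilon/2$, so both inclusions hold for unperturbed products, with a multiplicative slack in each.

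The plan is to absorb the perturbation into this slack. Given $\nu'$ with $\supp\nu'\subset B_{s\varepsilon}(\supp\nu)$, pick for each $h_i'$ some $h_i\in\supp\nu$ with $\delta(h_i,h_i')<s\varepsilon$. For $s\le s_0$ all $h_i,h_i'$ lie in $W$, so the telescoping estimate with norms bounded by $e^{B}$ gives
$$
\|g'-g\|+\|g'^{-1}-g^{-1}\|\le n\,e^{nB}\,s\varepsilon=:c_n s\varepsilon,
$$
where $c_n$ depends only on $\nu$ because $n$ is fixed. Next, the projective action is Lipschitz in the group variable on compact sets: $\ddp(g'x,gx)\le C\|g'-g\|\,\|g^{-1}\|$ uniformly in $x$. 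Combining,
$$
\ddp(g'x,e)\le L_n\,\ddp(x,e)+C'c_n s\varepsilon .
$$
For $x\in E(2\widetilde\varepsilon)$ the first term is at most $2L_n\widetilde a\varepsilon<\varepsilon/2$. Since the inequality in \eqref{eq:radii-slack} is strict with a fixed gap, we can choose $s$ so that $C'c_n s\le \frac12-2L_n\widetilde a$, which gives \eqref{eq:forward-inclusion}. The same argument with $g'^{-1}$, using the gap $\widetilde a/2-2L_na_1>0$, gives \eqref{eq:inverse-inclusion}.

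The only delicate point is uniformity. All constants, namely $C'$, $c_n$, $L_n$ and the two gaps, depend only on $\nu$ (through $W$, $A$, $B$ and the fixed $n$), and both sides scale linearly in $\varepsilon$. Therefore $s$ is independent of $\varepsilon\in(0,1]$. We also need $\varepsilon\le1$ so that $s\varepsilon\le s_0$ keeps the perturbed matrices inside $W$. One caveat: if \eqref{eq:compactNbhdEstimate} is meant only for products of elements of $W$, the unperturbed bound already covers $g'$ directly, since $g'$ is such a product and $e$ is close to fixed by $g'$. In that reading we would estimate $\ddp(g'e,e)\le C'c_ns\varepsilon$ and use the triangle inequality $\ddp(g'x,e)\le\ddp(g'x,g'e)+\ddp(g'e,e)$; the conclusion is the same.
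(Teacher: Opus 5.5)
Your argument is the paper's proof. You approximate each $h_i'$ by some $h_i\in\supp\nu$, telescope to get $\|g'-g\|$ and $\|(g')^{-1}-g^{-1}\|$ of order $s\varepsilon$, convert this into a uniform projective displacement, and absorb it into the strict slack of \eqref{eq:radii-slack}, using that the unperturbed product $g$ fixes $e$ and is $L_n$-Lipschitz.

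Two small corrections, neither of which affects the argument:
\begin{itemize}
\item The Lipschitz bound for $g^{-1}$ already follows from the left-hand inequality in \eqref{eq:compactNbhdEstimate}, so there is no need to enlarge $A$.
\item $B$ controls $\log\|f\|+\log\|f^{-1}\|$, not $\log\|f\|$ itself, so the factor $e^{nB}$ in your telescoping bound is not justified. Replace it by a power of $\max_{f\in W}\max\{\|f\|,\|f^{-1}\|\}$, which is finite because $W$ is compact.
\end{itemize}
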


\begin{proof}
 First choose $s$ small enough that
$B_s(\supp\nu)\subset W$.
For each $h_i'$ choose $h_i\in K$ with
$\norm{h_i'-h_i}<s\varepsilon$ and set
$g=h_n\cdots h_1$.
Telescoping the product difference, followed by the identity
$$
 (g')^{-1}-g^{-1}=(g')^{-1}(g-g')g^{-1},
$$
gives uniform bounds on $\norm{g'-g}$ and
$\norm{(g')^{-1}-g^{-1}}$ proportional to $s\varepsilon$.
The norms and inverse norms of these products are bounded on the
relevant compact sets. Hence there are constants $C_n,C_n^->0$ such that
\begin{align*}
 \sup_{x\in\PP(V)}d(g'x,gx)&\leq C_ns\varepsilon,\\
 \sup_{x\in\PP(V)}d\bigl((g')^{-1}x,g^{-1}x\bigr)
 &\leq C_n^-s\varepsilon.
\end{align*}   
The unperturbed product $g$ fixes $e$.
Thus, for $x\in E(2\widetilde\varepsilon)$,
$$
 d(g'x,e)\leq C_ns\varepsilon+L_nd(x,e)
 \leq\bigl(C_ns +2L_n\widetilde a\bigr)\varepsilon.
$$
Similarly, for $x\in E(2\varepsilon')$,
$$
 d\bigl((g')^{-1}x,e\bigr)
 \leq\bigl(C_n^-s +2L_na_1\bigr)\varepsilon.
$$
In view of \eqref{eq:radii-slack}, we may further decrease
$s$ so that
$$
 C_ns +2L_n\widetilde a<\frac12,
 \qquad
 C_n^-s+2L_na_1<\frac{\widetilde a}{2}.
$$
The two inclusions follow.
\end{proof}

Next set
\begin{equation}\label{eq:radii-double}
 \varepsilon''=3\varepsilon'e^{-\kappa_0n/2},
 \qquad
 c_\Omega=a_1\bigl(1-6e^{-\kappa_0n/2}\bigr)>0.
\end{equation}
By \eqref{eq:n-radii}, $2\varepsilon''<\varepsilon'<\varepsilon$.
Define
\begin{equation}\label{eq:omega}
 \Omega=(\varepsilon'-2\varepsilon'')^{-1}
       =(c_\Omega\varepsilon)^{-1}
\end{equation}
and
\begin{equation}\label{eq:radii-triple}
 \varepsilon'''=
 \min\left\{\varepsilon'',\frac12\Omega^{-1}e^{-\kappa_*n}\right\}.
\end{equation}
It follows that $\varepsilon'''=q\varepsilon$, where
\begin{equation}\label{eq:q}
 q=\min\left\{
 3a_1e^{-\kappa_0n/2},
 \frac{a_1}{2}\bigl(1-6e^{-\kappa_0n/2}\bigr)e^{-\kappa_*n}
 \right\}\in(0,1).
\end{equation}
In particular, $q$ is independent of $\varepsilon$.

\subsection{Uniform expansion and annular escape}

By decreasing $s$ if necessary, it follows from \cite{AEV2023}*{Corrollary~5.7}:  
\begin{lemma}\label{lem:local-expansion}
	For the fixed $\delta$ and $n$, there exist $\rho \in (0, 1/2)$, and a $\fT$-neighborhood $\cN'$ of $\nu$, all independent of $\varepsilon$, with the following property.
If $0<\varepsilon\leq\rho$, $\nu'\in\cN'$, and
$\supp\nu'\subset B_{s\varepsilon}(K)$, then
\begin{align}
 \nu'^{(n)}
 \{g:d(gx,gx')\geq e^{\kappa_0n/2}d(x,x')\}
 &>1-\delta
 &&(x\neq x'\text{ in }E(\varepsilon)),\label{eq:pair-expansion}\\
 \nu'^{(n)}\{g:gx\notin E(\varepsilon')\}
 &>1-\delta
 &&\left(\varepsilon''\leq d(x,e)\leq\varepsilon\right).
 \label{eq:annular-escape}
\end{align}
\end{lemma}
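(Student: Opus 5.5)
The plan is to derive Lemma~\ref{lem:local-expansion} from \cite{AEV2023}*{Corollary~5.7}, after checking that its hypotheses hold in our setting. The two facts that matter are that, by Lemma~\ref{lem:noStabilization}, the vertical angle $\VA_1\equiv1$, and that the equator is the single point $e$. That corollary is stated for $\nu$ itself and for pairs near the equator whose vertical angle is bounded below. We must upgrade it to perturbations $\nu'$ in two ways. First, $\nu'$ must be $\fT$-close to $\nu$. Second, $\supp\nu'\subset B_{s\varepsilon}(K)$, so each $n$-fold product $g'$ nearly fixes $e$, with an error of order $s\varepsilon$. The resulting probability bounds must be uniform in $\varepsilon\le\rho$.

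\textbf{Pair expansion \eqref{eq:pair-expansion}.} We will first apply \cite{AEV2023}*{Proposition~5.5} and Corollary~5.7 to $\nu$ with the fixed $n\ge N_0(\nu,\delta/2)$. This gives a radius $\rho_0$ such that for $x\ne x'$ in $E(\rho_0)$ the set of $g$ with $d(gx,gx')\ge e^{\kappa_0 n}d(x,x')$ has $\nu^{(n)}$-measure at least $1-\delta/2$. Here we use $\VA_1=1$, so no stabilization is needed. The point of proving the statement with rate $\kappa_0$ but claiming only $\kappa_0/2$ is to gain slack. The set where expansion holds is governed by the derivative of $g$ along $\cY(x,x')$ near $e$, which varies continuously in $(g,x,x')$. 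We would therefore express the estimate through the quantity $\inf_{x,x'\in E(\rho)}\log\frac{d(gx,gx')}{d(x,x')}$, whose limit as $x'\to x$ is a continuous function of $(g,x,\text{direction})$ on a compact set. By that continuity and compactness, the condition $d(gx,gx')\ge e^{3\kappa_0n/4}d(x,x')$ is an open condition in $g$, uniformly in the pair. It then passes from $\nu^{(n)}$ to $\nu'^{(n)}$ with loss at most $\delta/2$, provided $\nu'$ lies in a small weak$^*$-neighborhood $\cN'$. Here we use that $\nu'\mapsto\nu'^{(n)}$ is continuous and that lower bounds of open sets are preserved under the Portmanteau theorem. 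Closeness of supports, which holds for $s\varepsilon\le s$ small, keeps every product inside $W^{(n)}$, so $A$ controls the distortion.

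\textbf{Annular escape \eqref{eq:annular-escape}.} Let $x$ satisfy $\varepsilon''\le d(x,e)\le\varepsilon$. We apply the expansion statement to the pair $(x,e)$ for the \emph{unperturbed} measure. With $\nu^{(n)}$-probability at least $1-\delta/2$, this gives $d(gx,e)=d(gx,ge)\ge e^{\kappa_0n}d(x,e)\ge e^{\kappa_0 n}\cdot3\varepsilon'e^{-\kappa_0n/2}$, which is at least $3\varepsilon'$ by \eqref{eq:radii-double}. If $d(x,e)$ is too large for the local statement to apply, we instead use the bound \eqref{eq:compactNbhdEstimate} together with the ratio choice $\widetilde a$. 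For a perturbed product $g'$ we compare it with a nearby unperturbed product $g$, exactly as in the proof of Lemma~\ref{lem:support-perturbation}; this gives $d(g'x,gx)\le C_ns\varepsilon$. After shrinking $s$ so that $C_ns<a_1$, we get $d(g'x,e)\ge3\varepsilon'-a_1\varepsilon>\varepsilon'$. To transfer the probability from $\nu^{(n)}$ to $\nu'^{(n)}$ we use the same open-set and weak$^*$ argument. The relevant event is scale invariant, because in a chart of radius $\rho$ around $e$ the action is close to its linearization. This is why the neighborhood $\cN'$ and $\rho$ do not depend on $\varepsilon$.

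\textbf{Main obstacle.} The delicate step is uniformity in $\varepsilon$. The events involve radii proportional to $\varepsilon$, while weak$^*$ closeness of $\nu'$ to $\nu$ is a fixed, scale-free condition. We resolve this by rescaling. We blow up the chart at $e$ by the factor $1/\varepsilon$ and pass to the linearized action, namely the action induced on $E^\perp$ by $g^\perp$ relative to the fixed point $e$. In those coordinates both events become scale-independent conditions on $g$. The nonlinear correction is $O(\rho)$ and the support perturbation is $O(s)$, both uniformly in $\varepsilon$. These are absorbed by taking $\rho$ and $s$ small and using the slack between $\kappa_0$ and $\kappa_0/2$.
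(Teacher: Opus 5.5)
Your argument is essentially the paper's. There, \eqref{eq:pair-expansion} is taken from \cite{AEV2023}*{Corollary~5.7} with $\VA_1\equiv1$ (Lemma~\ref{lem:noStabilization}), including the passage to nearby $\nu'$ that you redo by continuity on the compactified pair space, and \eqref{eq:annular-escape} follows by expanding the pair $(x,e)$ and shrinking $s$ so that perturbed products move $e$ by less than $2\varepsilon'$.

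Two slips to fix:
\begin{enumerate}
\item Use the pair-dependent ratio $\log\frac{d(gx,gx')}{d(x,x')}$, not its infimum over pairs. The infimum is governed by the co-norm of $g^\perp/|g^E|$, which can contract, so the good set of $g$ must depend on the pair.
\item Close the annular step by applying \eqref{eq:pair-expansion} for $\nu'$ itself to the pair $(x,e)$, so that $d(g'x,e)\ge d(g'x,g'e)-d(g'e,e)\ge3\varepsilon'-C_ns\varepsilon>\varepsilon'$ once $C_ns<2a_1$. Do not compare $g'x$ with $gx$ for a nearby unperturbed $g$: Hausdorff proximity of supports gives no coupling of $\nu'^{(n)}$ with $\nu^{(n)}$. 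With this change, no rescaling is needed.
\end{enumerate}
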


\noindent\emph{Choice of the uniform constants.}
With $\delta$ and $n$ fixed, define the constants of Theorem~B by
\begin{equation}\label{eq:epsilon-zero}
 \varepsilon_0=\min\left\{\rho,\frac1{2c_\Omega}\right\}>0,
 \qquad
 \cN_1=\cN'.
\end{equation}
For every $0<\varepsilon\leq\varepsilon_0$, we have
$\varepsilon\leq\rho$, so the local expansion and escape estimates
apply, and $\Omega=(c_\Omega\varepsilon)^{-1}\geq2$.
The choice of $s$ makes both Lemma~\ref{lem:support-perturbation} and
Lemma~\ref{lem:local-expansion} applicable under the support condition
of Theorem~B. These constants are independent of $\varepsilon$ and will be used throughout the remainder of the proof.

\subsection{The adapted Markov operator}
Now let $X$ be a measurable subset of $P = \PP\RR^d$ and $\zeta$ be a measure on $G = \GL(d)$. The $\zeta$-core of $X$ is the set
$$
 \cX_\zeta X=\{x\in X:gx\in X\text{ for every }g\in\supp\zeta\},
$$
and we also define the strong $\zeta$-core to be
$$
\cX_\zeta^\#X=\{x\in \cX_\zeta X:g^{-1}x\in \cX_\zeta X\text{ for every }g\in\supp\zeta\}.
$$
Both sets are closed. Write $\zeta_x$ for the push-forward of $\zeta$
under $g\mapsto gx$ viewed as a measure on $\PP(\RR^d)$. Let $\cP$ be a Markov operator on $X$, with transition probabilities $\set{P(x,): x\in X}$. We say that $\cP$ is \emph{adapted to $\zeta$ on $X$} if 
\begin{enumerate}
\item $P(x,\cdot)=\zeta_x$ for every $x\in \cX_\zeta X $;
\item $P(x,\cX_\zeta^\# X)=0$ for every $x$ in the border region $\partial_\zeta X = X\setminus \cX_\zeta X$.
\end{enumerate}
Thus the operator retains the original dynamics on the core and
prevents transitions from the border into the strong $\zeta$-core $\cX_\zeta^\# X$.

Let $\varepsilon$, $\nu'$, and $\eta'$ satisfy the hypotheses of
Theorem~B, and write
$$
 X=E(\varepsilon),\qquad \zeta=\nu'^{(n)},\qquad M=\eta'(X).
$$

If $M=0$ the mass estimate is immediate; assume $M>0$ and set
$\mu=M^{-1}\eta'|_X$. The measure $\eta'$ is also $\zeta$-stationary.


We use the localization construction of \cite{AEV2023}*{Proposition~6.14, equation~(6.17)}; see also
\cite{V2020Dis}*{Appendix~A.3}.
Define the exit probability and the entrance measure by
$$
 b(x)=\zeta\{g:gx\notin X\},\qquad
 \alpha(D)=\int_{X^c}\zeta\{g:gy\in D\}\dd\eta'(y)
 \quad(D\subset X\text{ measurable}).
$$
Stationarity gives
$$
 I:=\alpha(X)=\int_X b(x)\dd\eta'(x).
$$
If $J>0$, put $R=\alpha/J$. If $J=0$, let $R$ be a point mass at any
point of $X\setminus E(2\varepsilon')$. In either case define
\begin{equation}\label{eq:localized-kernel}
 P_\varepsilon(x,D)
 =\zeta\{g:gx\in D\}+b(x)R(D),
 \qquad x\in X,\quad D\subset X.
\end{equation}

\begin{lemma}\label{lem:adapted-localization}
The kernel $P_\varepsilon$ defined in \eqref{eq:localized-kernel} is a measurable Markov kernel on $X$, preserves
$\mu=M^{-1}\eta'|_X$, and is adapted to $\zeta$ in the sense defined
above. Moreover,
\begin{equation}\label{eq:core-inclusions}
 E(2\widetilde\varepsilon)\subset \cX_\zeta X,\qquad
 P_\varepsilon(x,E(2\varepsilon'))=0\quad(x\in \partial_\zeta X).
\end{equation}
\end{lemma}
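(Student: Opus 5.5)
The proof has two parts: the Markov-kernel and invariance properties of $P_\varepsilon$, and the two inclusions \eqref{eq:core-inclusions}. The latter, together with the adaptedness, will be deduced from Lemma~\ref{lem:support-perturbation}. (I read the normalizing constant $J$ in the definition of $R$ as $I=\alpha(X)$.)

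\textbf{Markov kernel.} For fixed $D$, the maps $x\mapsto\zeta\{g:gx\in D\}$ and $b$ are measurable by Fubini, since $(g,x)\mapsto gx$ is continuous. The total mass is
$$P_\varepsilon(x,X)=\zeta\{gx\in X\}+b(x)R(X)=(1-b(x))+b(x)=1.$$

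\textbf{Invariance of $\mu$.} It suffices to show that $\eta'|_X$ is invariant. Stationarity of $\eta'$ under $\zeta=\nu'^{(n)}$, restricted to $D\subset X$, gives
$$\eta'(D)=\int_X\zeta\{gx\in D\}\,d\eta'(x)+\alpha(D).$$
On the other hand, $\int_X b\,d\eta'\,R(D)=I\cdot\alpha(D)/I=\alpha(D)$. Hence $\int_X P_\varepsilon(x,D)\,d\eta'(x)=\eta'(D)$. If $I=0$, then $\alpha=0$ and $b=0$ for $\eta'$-a.e.\ $x\in X$, so the correction term vanishes $\eta'$-a.e.\ and invariance still holds.

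\textbf{Adaptedness on the core.} On $\cX_\zeta X$ every $g\in\supp\zeta$ maps $x$ into $X$. Thus $b(x)=0$ and $P_\varepsilon(x,\cdot)=\zeta_x$.

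\textbf{First inclusion in \eqref{eq:core-inclusions}.} Every $g\in\supp\zeta$ is a product $h_n'\cdots h_1'$ with $h_i'\in\supp\nu'$. By \eqref{eq:forward-inclusion}, $gE(2\widetilde\varepsilon)\subset E(\varepsilon/2)\subset X$, so $E(2\widetilde\varepsilon)\subset\cX_\zeta X$.

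\textbf{Second inclusion in \eqref{eq:core-inclusions}.} Let $x\in\partial_\zeta X$. The first term of $P_\varepsilon(x,E(2\varepsilon'))$ vanishes: if $gx\in E(2\varepsilon')$ for some $g\in\supp\zeta$, then \eqref{eq:inverse-inclusion} gives $x\in E(\widetilde\varepsilon/2)\subset E(2\widetilde\varepsilon)\subset\cX_\zeta X$, a contradiction. For the second term we need $R(E(2\varepsilon'))=0$.
\begin{itemize}
\item If $I=0$, this holds by the choice of the point mass in $X\setminus E(2\varepsilon')$.
\item If $I>0$, we need $\alpha(E(2\varepsilon'))=0$. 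Suppose $y\notin X$ and $gy\in E(2\varepsilon')$. Then \eqref{eq:inverse-inclusion} gives $y=g^{-1}(gy)\in E(\widetilde\varepsilon/2)\subset X$, a contradiction. Hence $\alpha(E(2\varepsilon'))=0$.
\end{itemize}

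\textbf{Adaptedness on the border.} It remains to show that $P_\varepsilon(x,\cX^\#_\zeta X)=0$ for $x\in\partial_\zeta X$. The $\zeta$-part is zero: if $gx\in\cX^\#_\zeta X$, then $x=g^{-1}(gx)\in\cX_\zeta X$ by the definition of the strong core, which is impossible for $x\in\partial_\zeta X$. The $R$-part is the step I expect to be the main obstacle. Here $\alpha(\cX^\#_\zeta X)=0$ by the same argument, since if $gy\in\cX^\#_\zeta X$ then $y\in\cX_\zeta X\subset X$, contradicting $y\notin X$. In the point-mass case $I=0$, the point must be chosen outside $\cX^\#_\zeta X$. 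I would arrange this by picking it in $X\setminus\cX^\#_\zeta X$, which is nonempty, for instance near $\partial X$ because $\supp\zeta$ contains expanding elements. Equivalently, one can adjust the convention for the point mass in the case $I=0$; this case does not affect the invariance of $\mu$ in any event.
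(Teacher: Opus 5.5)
Your proof is correct and follows the paper's argument. The one difference is that you get $P_\varepsilon(x,E(2\varepsilon'))=0$ directly from the inverse inclusion, whereas the paper first shows $E(2\varepsilon')\subset\cX^\#_\zeta X$ and then invokes adaptedness. For $I=0$ the paper, like you, must restrict the point mass: it places it at some $z\in\partial_\zeta X$ (the border condition is vacuous when $\partial_\zeta X=\emptyset$), and such a $z$ avoids both $\cX^\#_\zeta X$ and $E(2\varepsilon')\subset E(2\widetilde\varepsilon)\subset\cX_\zeta X$, so your appeal to expanding elements for nonemptiness is unnecessary.
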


\begin{proof}
The defining formula gives a nonnegative, kernel. Since
$R(X)=1$, its total mass is
$$
 P_\varepsilon(x,X)=1-b(x)+b(x)=1.
$$
When $I>0$,
$$
 \int_X P_\varepsilon(x,D)\dd\eta'(x)
 =\int_X\zeta\{g:gx\in D\}\dd\eta'(x)+\alpha(D)
 =\eta'(D).
$$
When $I=0$, the term $b(x) R(D)$ integrates to zero and the same
identity follows from $\alpha=0$. Thus $P_\varepsilon$ leaves $\mu$
invariant in both cases.
Thus $P_\varepsilon$ leaves $\mu$ invariant.

We now verify that $P_\epsilon$ is adapted. For $x \in \cX_\zeta X$ we have $b(x) = 0$, hence $P_\epsilon(x,) = \zeta_x$. If $x\notin \cX_\zeta X$, an image $gx$ cannot belong to $\cX_\zeta^\#X$, because $gx\in \cX_\zeta^\#X$ would imply $x=g^{-1}(gx)\in \cX_\zeta X$.
This observation applies in particular to every $x\in X^c$,
so $\alpha(\cX_\zeta^\# X)=0$. Therefore $R(\cX_\zeta^\# X)=0$ when $I>0$.
For $I=0$ and $\cX_\zeta X\neq X$, the choice $z\in \partial_\zeta X$ gives the same conclusion. When $\cX_\zeta X=X$ there is no border. Thus $P_\varepsilon(x,\cX_\zeta^\# X)=0$ for every $x\in \partial_\zeta X$,
which proves adaptation.

The support inclusions \eqref{eq:forward-inclusion}--\eqref{eq:inverse-inclusion} give
\begin{equation}\label{eq:core-inclusions}
 E(2\widetilde\varepsilon)\subset \cX_\zeta X,\qquad
 P_\varepsilon(x,E(2\varepsilon'))=0\quad(x\in \partial_\zeta X).
\end{equation}
Indeed, the forward inclusion gives
$E(2\widetilde\varepsilon)\subset \cX_\zeta X$. Since $a_1<\widetilde a$, we also have $E(2\varepsilon')\subset \cX_\zeta X$. For $y\in E(2\varepsilon')$, the inverse inclusion gives $g^{-1}y\in E(\widetilde\varepsilon/2)\subset \cX_\zeta X$ for every $g\in\supp\zeta$. Consequently $E(2\varepsilon')\subset \cX_\zeta^\# X$, and the second assertion of \eqref{eq:core-inclusions} follows from adaptation.
\end{proof}

\subsection{The Margulis function}
Let $X=E(\varepsilon)$ and let
$$
 C_\varepsilon=\{x\in X:d(x,e)<2\varepsilon''\}, \qquad e = \PP(E). 
$$
Let
\begin{equation}\label{eq:psi}
 \Psi_\varepsilon(x,x')=
 \begin{cases}
 \log\bigl(\Omega+d(x,x')^{-1}\bigr),
 &x\neq x'\text{ and }(x\in C_\varepsilon\text{ or }x'\in C_\varepsilon),\\[1mm]
 +\infty,&x=x'\in C_\varepsilon,\\[1mm]
 \log\Omega,&\text{otherwise}.
 \end{cases}
\end{equation}

If $\varepsilon\leq(2c_\Omega)^{-1}$, then $\Omega\geq2$, so
$\Psi_\varepsilon$ is nonnegative. 
$\Psi_\varepsilon $ is measurable; it is bounded outside every
neighborhood of the diagonal: If $U$ is a neighborhood of the diagonal, compactness gives: $r_U = \min_{X^2\setminus U}d(x,x')>0$, then $\Psi \le \log(\Omega + r_U^{-1})$ on $X^2\setminus U$. Choose any $L >\log\Omega$. On $\set{\Psi \ge L}$, every point not in the diagonal has a coordinate in $C =E(2\varepsilon'') $ and therefore 
 $$
 \Psi_\varepsilon = \log(\Omega + d^{-1}).
 $$
this is continuous away from diagonal and tends to $+\infty$ along pairs approaching the diagonal. Hence $\Psi_{|_{\set{\Psi\ge L}}}$ is lower semicontinuous.
Two elementary calculations explain the choices of $\Omega$ and
$\varepsilon'''$. If one coordinate lies in $C_\varepsilon$ and the other
lies outside $E(\varepsilon')$, the triangle inequality gives
$$
 d(x,x')\geq\varepsilon'-2\varepsilon''=\Omega^{-1}.
$$
Consequently, for every pair outside $E(\varepsilon')^2$,
\begin{equation}\label{eq:cutoff-bound}
 \Psi_\varepsilon(x,x')\leq\log\Omega+\log2.
\end{equation}
If $x\neq x'$ are in $E(\varepsilon''')$, then
\begin{equation}\label{eq:inner-threshold}
 d(x,x')\leq2\varepsilon''',
 \qquad
 d(x,x')^{-1}\geq\Omega e^{\kappa_*n}.
\end{equation}
In particular, the elementary inequality
\begin{equation}\label{eq:log-contraction}
 \log(\Omega+cz)\leq\log(\Omega+z)+\frac12\log c,
 \qquad 0<c\leq1,\quad z\geq\frac{\Omega}{\sqrt c},
\end{equation}
applies with $c=e^{-2\kappa_*n}$ at the inner scale.
\subsection{Recoupling}
For $x,x'\in \cX_\zeta X$ with $d(x,e),d(x',e)>\varepsilon''$, put
$$
 B_x=\{g:gx\in E(\varepsilon')\},\qquad
 B_{x'}=\{g:gx'\in E(\varepsilon')\}.
$$
By \eqref{eq:annular-escape},
$p:=\zeta(B_x)<\delta$ and $p':=\zeta(B_{x'})<\delta$, so
$p+p'<1$. Thus \cite{AEV2023}*{Lemma~6.1} applies with both measures equal
to $\zeta$ on $G$ and with $A=B_x$, $A'=B_{x'}$.
Write
$$
 \zeta_b=\zeta|_{B_x},\quad \zeta_g=\zeta|_{B_x^c},\qquad
 \zeta_b'=\zeta|_{B_{x'}},\quad \zeta_g'=\zeta|_{B_{x'}^c}.
$$
The explicit coupling in \cite{AEV2023}*{equation~(6.3)} is
\begin{equation}\label{eq:directional-recoupling}
 \Gamma_{x,x'}=
 \frac{\zeta_b\times\zeta_g'}{1-p'}
 +\frac{\zeta_g\times\zeta_b'}{1-p}
 +\frac{1-p-p'}{(1-p)(1-p')}\,
       \zeta_g\times\zeta_g'
\end{equation}
It has both marginals equal to $\zeta$ and
$\Gamma_{x,x'}(B_x\times B_{x'})=0$. Following the recoupling principle in \cite{AEV2023}*{Section~11.1},
define a measurable probability kernel $\Theta_{x,x'}$ on $X^2$
by three cases:
\begin{enumerate}
\item If $x,x'\in \cX_\zeta X$ and
$\min\{d(x,e),d(x',e)\}\leq\varepsilon''$, let $\Theta_{x,x'}$ be the
push-forward of $\zeta$ under $g\mapsto(gx,gx')$.
\item If $x,x'\in \cX_\zeta X$ and both radial distances exceed $\varepsilon''$,
let $\Theta_{x,x'}$ be the push-forward of $\Gamma_{x,x'}$ under
$(g,h)\mapsto(gx,hx')$.
\item Otherwise, let
$\Theta_{x,x'}=P_\varepsilon(x,\cdot)\times
P_\varepsilon(x',\cdot)$.
\end{enumerate}
Each case has marginals $P_\varepsilon(x,\cdot)$ and
$P_\varepsilon(x',\cdot)$, and the resulting kernel is measurable.
Define the associated self-coupling operator by
$$
 \widetilde P_\varepsilon f(x,x')
 =\int_{X^2}f(y,y')\dd\Theta_{x,x'}(y,y').
$$
In cases (2) and (3), $\Theta_{x,x'}$ assigns zero mass to
$E(\varepsilon')^2$: this follows from the avoidance property of
$\Gamma_{x,x'}$ and from \eqref{eq:core-inclusions}, respectively.
\begin{proposition}\label{prop:drift}
The recoupled operator satisfies
\begin{align}
 \widetilde P_\varepsilon\Psi_\varepsilon
 &\leq\Psi_\varepsilon+\kappa_B
 &&\text{on }X^2,\label{eq:global-drift}\\
 \widetilde P_\varepsilon\Psi_\varepsilon
 &\leq\Psi_\varepsilon-\kappa_A
 &&\text{on }E(\varepsilon''')^2.
 \label{eq:inner-drift}
\end{align}
Here $\kappa_A$ and $\kappa_B$ are as in
\eqref{eq:margulisConstant}.
\end{proposition}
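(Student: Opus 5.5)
We prove the two inequalities of Proposition~\ref{prop:drift} pointwise, following the three cases that define $\Theta_{x,x'}$ and comparing $\widetilde P_\varepsilon\Psi_\varepsilon(x,x')$ with $\Psi_\varepsilon(x,x')$. Two facts do most of the work. First, $\Psi_\varepsilon\ge\log\Omega$ everywhere, since $\Omega\ge2$. Second, by \eqref{eq:cutoff-bound}, $\Psi_\varepsilon\le\log\Omega+\log2$ at every pair outside $E(\varepsilon')^2$. In cases (2) and (3), $\Theta_{x,x'}$ gives zero mass to $E(\varepsilon')^2$, as observed just before the proposition. Hence $\widetilde P_\varepsilon\Psi_\varepsilon\le\log\Omega+\log2\le\Psi_\varepsilon+\log2\le\Psi_\varepsilon+\kappa_B$, because $C_*\ge\log2$. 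This settles \eqref{eq:global-drift} in those cases. They never arise on $E(\varepsilon''')^2$: there both points lie within $\varepsilon'''\le\varepsilon''$ of $e$, and inside $\cX_\zeta X$ because $\varepsilon'''<2\widetilde\varepsilon$, so case (1) applies.

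In case (1), $\Theta_{x,x'}$ is the diagonal push-forward $(gx,gx')$ with $g\sim\zeta=\nu'^{(n)}$. For the global bound we use \eqref{eq:compactNbhdEstimate}, which gives $d(gx,gx')^{-1}\le e^{An}d(x,x')^{-1}$, hence $\log(\Omega+d(gx,gx')^{-1})\le An+\log(\Omega+d(x,x')^{-1})$. We must also handle the cutoff: the image pair may have a coordinate in $C_\varepsilon$ while the original pair does not. In that situation one original point has $d(\cdot,e)\le\varepsilon''$, say $x$ (this is case (1)). If $x\notin C_\varepsilon$ then $d(x,e)\ge2\varepsilon''$, contradicting $d(x,e)\le\varepsilon''$, so $x\in C_\varepsilon$ and the original value is already the uncut logarithm. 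Conversely, if the image has no coordinate in $C_\varepsilon$, its value is $\log\Omega\le\Psi_\varepsilon$. This gives $\widetilde P_\varepsilon\Psi_\varepsilon\le\Psi_\varepsilon+An\le\Psi_\varepsilon+\kappa_B$. Checking this last inequality against $\kappa_B=C_*(1+\delta n)$ is the delicate point: $C_*\ge A+\kappa_*$ alone does not give $An\le C_*(1+\delta n)$. So we use instead the split coming from \eqref{eq:pair-expansion}. On the good set, of measure $>1-\delta$, the pair expands and the value decreases. On the bad set, of measure $<\delta$, the increase is at most $An\le C_*n$. Together these give an expected increase of at most $C_*\delta n\le\kappa_B$. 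This is exactly the \cite{AEV2023}*{Proposition~11.2}-type bound that $C_1'''$ encodes, and we would cite it there.

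For \eqref{eq:inner-drift}, take $x\ne x'$ in $E(\varepsilon''')$; the diagonal case is trivial since the value is $+\infty$. The key computation uses \eqref{eq:pair-expansion}, valid since $E(\varepsilon''')\subset E(\varepsilon)$ and $\varepsilon\le\rho$. With $\zeta$-probability $>1-\delta$ we have $d(gx,gx')\ge e^{\kappa_0n/2}d(x,x')$, so $d(gx,gx')^{-1}\le c\,d(x,x')^{-1}$ with $c=e^{-\kappa_0n/2}\le e^{-2\kappa_*n}$ by \eqref{eq:kappa-comparison}. Inequality \eqref{eq:inner-threshold} gives $z=d(x,x')^{-1}\ge\Omega e^{\kappa_*n}=\Omega/\sqrt c$, so \eqref{eq:log-contraction} yields a decrease of at least $\kappa_*n$ on this good set. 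The cutoff only lowers values further, and the original pair is uncut because $x\in C_\varepsilon$. On the bad set, of measure $<\delta$, the increase is at most $An$ by \eqref{eq:compactNbhdEstimate}. Altogether $\widetilde P_\varepsilon\Psi_\varepsilon-\Psi_\varepsilon\le-(1-\delta)\kappa_*n+\delta An$, which is $\le-(\kappa_*-C_*\delta)n=-\kappa_A$ because $\delta\kappa_*n\le\delta C_*n-\delta An$ follows from $C_*\ge A+\kappa_*$.

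The main obstacle is the global bound in case (1) for pairs near the diagonal but not both near $e$. There we need the expansion estimate \eqref{eq:pair-expansion}, which is stated for all $x\ne x'\in E(\varepsilon)$, together with careful bookkeeping of the cutoff transitions between $C_\varepsilon$ and its complement.
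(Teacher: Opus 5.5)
Your proposal is correct and follows essentially the same route as the paper: the cutoff bound \eqref{eq:cutoff-bound} handles cases (2)--(3), and in case (1) the good/bad split from \eqref{eq:pair-expansion} gives an expected loss of at most $A\delta n\le\kappa_B$ on $X^2$ and a gain of $\kappa_*n$ on $E(\varepsilon''')^2$ via \eqref{eq:log-contraction}. One small slip: when you invoke \eqref{eq:log-contraction} you should take $c=e^{-2\kappa_*n}$, reaching it from $e^{-\kappa_0n/2}$ by monotonicity of $\log(\Omega+\cdot)$, because the identity $\Omega e^{\kappa_*n}=\Omega/\sqrt c$ holds only for $c=e^{-2\kappa_*n}$.
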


\begin{proof}
In cases (2) and (3), the bound \eqref{eq:cutoff-bound} gives
$$
 \widetilde P_\varepsilon\Psi_\varepsilon(x,x')
 \leq\log\Omega+\log2
 \leq\Psi_\varepsilon(x,x')+\log2
 \leq\Psi_\varepsilon(x,x')+\kappa_B.
$$

Consider case (1). At least one coordinate lies in $E(\varepsilon'')$,
so the input pair lies inside the cutoff. If $x=x'$, its energy is
infinite and the asserted upper bounds are automatic. Suppose
$x\neq x'$, and put $z=d(x,x')^{-1}$.
On the good set in \eqref{eq:pair-expansion}, the distance does not
decrease, and hence
$\Psi_\varepsilon(gx,gx')\leq\Psi_\varepsilon(x,x')$.
On the complementary set,
\eqref{eq:compactNbhdEstimate} gives
$$
 \Psi_\varepsilon(gx,gx')
 \leq\log(\Omega+e^{An}z)
 \leq\Psi_\varepsilon(x,x')+An.
$$
Therefore
$$
 \widetilde P_\varepsilon\Psi_\varepsilon(x,x')
 \leq\Psi_\varepsilon(x,x')+A\delta n
 \leq\Psi_\varepsilon(x,x')+\kappa_B,
$$
proving \eqref{eq:global-drift}.

Finally, if $x,x'\in E(\varepsilon''')$, then
$\varepsilon'''\leq\varepsilon''<\widetilde\varepsilon$, so
\eqref{eq:core-inclusions} places the pair in case (1).
For distinct such points,
\eqref{eq:kappa-comparison} implies
$e^{\kappa_0n/2}\geq e^{2\kappa_*n}$.
On the good set in \eqref{eq:pair-expansion}, if the image pair is
inside the cutoff, equations
\eqref{eq:inner-threshold}--\eqref{eq:log-contraction} give
$$
 \Psi_\varepsilon(gx,gx')
 \leq\log(\Omega+e^{-2\kappa_*n}z)
 \leq\Psi_\varepsilon(x,x')-\kappa_*n.
$$
If the image pair is outside the cutoff, the same bound follows from
$\Psi_\varepsilon(gx,gx')=\log\Omega$ and
$\Psi_\varepsilon(x,x')\geq\log\Omega+\kappa_*n$.
The complementary set has probability less than $\delta$ and contributes
a loss of at most $An$. Consequently,
$$
 \widetilde P_\varepsilon\Psi_\varepsilon(x,x')
 \leq\Psi_\varepsilon(x,x')-
       \bigl(\kappa_*-(A+\kappa_*)\delta\bigr)n
 \leq\Psi_\varepsilon(x,x')-\kappa_A.
$$
This proves \eqref{eq:inner-drift}.
\end{proof}
\begin{proof}[Proof of Theorem~B]
Fix an admissible $\varepsilon$, $\nu'$, and $\eta'$, and write
$$
 X=E(\varepsilon),\qquad
 F=E(\varepsilon'''),\qquad
 M=\eta'(X),\qquad m_0=\eta'(F).
$$
If $M=0$, then $m_0=0$ because $F\subset X$, and
\eqref{eq:balance} is immediate. We therefore assume $M>0$.

The kernel \eqref{eq:localized-kernel} leaves
$\mu=M^{-1}\eta'|_X$ invariant, and Proposition~\ref{prop:drift} gives
the drift estimates for $\widetilde P_\varepsilon$.
The measure $\mu$ is non-atomic. We use the minimum-energy argument of
\cite{V2020Dis}*{Lemma~A.5}, which requires only a measurable coupling
operator. The set of self-couplings of $\mu$ is weakly compact, and the
energy $\xi\mapsto\int\Psi_\varepsilon\dd\xi$ is lower semicontinuous.
Its minimum is finite: a non-atomic measure has a self-coupling vanishing
on a neighborhood of the diagonal, where $\Psi_\varepsilon$ is bounded
\cite{AEV2023}*{Corollary~6.11}. Let $\widetilde\mu$ be a minimizer.
Since $P_\varepsilon$ leaves $\mu$ invariant by lemma~\ref{lem:adapted-localization},
\cite{AEV2023}*{Lemma~6.18} shows that
$\widetilde P_\varepsilon^*\widetilde\mu$ is another self-coupling of
$\mu$. Minimality therefore gives
\begin{equation}\label{eq:finite-energy}
 \int_{X^2}\Psi_\varepsilon\dd\widetilde\mu<\infty,
 \qquad
 \int_{X^2}\widetilde P_\varepsilon\Psi_\varepsilon
      \dd\widetilde\mu
 \geq\int_{X^2}\Psi_\varepsilon\dd\widetilde\mu.
\end{equation}
Set $\widetilde m=M\widetilde\mu$. Both projections of $\widetilde m$
are the unnormalized restriction $\eta'|_X$, and
$\widetilde m(X^2)=M$.
Let $\cA=F^2$ and $\cB=X^2\setminus\cA$.
Integrating the drift inequalities against $\widetilde m$ gives
\begin{align*}
 \int_{X^2}\Psi_\varepsilon\dd\widetilde m
 &\leq\int_{X^2}\widetilde P_\varepsilon\Psi_\varepsilon
             \dd\widetilde m\\
 &\leq\int_{X^2}\Psi_\varepsilon\dd\widetilde m
       -\kappa_A\widetilde m(\cA)
       +\kappa_B\widetilde m(\cB).
\end{align*}
The common integral is finite by \eqref{eq:finite-energy}, so it can be
cancelled. Since $\widetilde m(\cB)=M-\widetilde m(\cA)$, we obtain
\begin{equation}\label{eq:inner-coupling-mass}
 \widetilde m(F^2)\leq
 \frac{\kappa_B}{\kappa_A+\kappa_B}M=\theta M.
\end{equation}

The projection identities give
$\widetilde m(F\times X)=\widetilde m(X\times F)=m_0$.
Therefore
\begin{equation}\label{eq:coupling-lower-bound}
 \widetilde m(F^2)\geq2m_0-M.
\end{equation}
Combining \eqref{eq:inner-coupling-mass} and
\eqref{eq:coupling-lower-bound}, and using \eqref{eq:relationKappaAB},
we find
$$
 m_0\leq\frac{1+\theta}{2}M
 <\frac{11}{20}M<\frac23M.
$$
Since $\varepsilon'''=q\varepsilon$, this proves
\eqref{eq:balance}.
\end{proof}
\begin{remark}\label{rem:nonatomic}
The argument uses only stationarity and non-atomicity of $\eta'$.
Consequently, Theorem~B holds for every non-atomic stationary probability
measure, without the genericity or top-exponent realization hypothesis.
\end{remark}